\journalname{}
\colorlet{shadecolor}{gray!20} 
\definecolor{mygreen}{rgb}{0,0.6,0}
\pgfplotsset{compat=1.17}
\def\e{\mathrm{e}}
\def\Mb{{\bm M}}
\def\sym{\mathrm{sym}}
\def\skew{\mathrm{skew}}
\def\tr{\mathrm{tr}}
\def\dist{\mathrm{dist}}
\def\res{\mathrm{res}}
\def\Rbnk{{\mathbb{R}^{2n\times 2k}}}
\def\Rbnn{{\mathbb{R}^{2n\times 2n}}}
\def\Spkn{{\mathrm{Sp}(2k,2n)}}
\def\Spn{{\mathrm{Sp}(2n)}}
\def\D{\mathrm{D}}
\def\d{\mathrm{d}}
\def\tan{\mathrm{tan}}
\def\diag{\mathrm{diag}}
\def\calX{\mathcal{X}}
\def\calR{\mathcal{R}}
\def\calP{{\cal P}}
\def\calT{{\cal T}}
\def\romT{\mathrm{T}}
\def\calM{{\cal M}}
\def\e{\mathrm{e}}
\def\hess{\mathrm{Hess}}
\def\grad{\mathrm{grad}}
\def\dist{\mathrm{dist}}
\def\R{\mathbb{R}}
\newcommand{\skewset}{{\cal S}_{\mathrm{skew}}}
\newcommand{\symset}{{\cal S}_{\mathrm{sym}}}
\newcommand{\proj}{\mathcal{P}_X^{}}
\newcommand{\PXp}{\Pi_X^{\perp}}
\newcommand{\projn}{\mathcal{P}_X^\perp}
\newcommand{\proje}{\mathcal{P}_{X,e}^{}}
\newcommand{\projM}{\mathcal{P}_{X,\Mb}^{}}
\newcommand{\projc}{\mathcal{P}_{X,c}^{}}
\newcommand{\rgrad}[1]{\mathrm{grad} f(#1)}
\newcommand{\rgrade}[1]{\mathrm{grad}_e f(#1)}
\newcommand{\rgradM}[1]{\mathrm{grad}_{\Mb} f(#1)}
\newcommand{\rgradc}[1]{\mathrm{grad}_c f(#1)}
\newcommand{\Hess}[2]{\mathrm{Hess} f(#1)[#2]}
\newcommand{\HessM}[2]{\mathrm{Hess}_{\Mb} f(#1)[#2]}
\newcommand{\Hesse}[2]{\mathrm{Hess}_e f(#1)[#2]}
\newcommand{\Hessc}[2]{\mathrm{Hess}_c f(#1)[#2]}
\newcommand{\Nablaf}[2]{\nabla^2 \bar{f}(#1)[#2]}
\newcommand{\TX}{{\mathrm{T}_{X}}\Spkn}
\newcommand{\NX}{{\mathrm{T}_{X}^\perp}\Spkn}
\newcommand{\NXc}{{\mathrm{T}_{X}^{\perp,c}}\Spkn}
\newcommand{\NXM}{{\mathrm{T}_{X}^{\perp,\Mb}}\Spkn}
\newcommand{\mvec}[1]{\mathrm{vec}(#1)}
\newcommand{\mveck}[1]{\mathrm{veck}(#1)}
\definecolor{bingreen}{rgb}{0,0.6,0.2}
\definecolor{binblue}{rgb}{0.5,0,1}
\definecolor{orange}{rgb}{1,0.45,0}
\begin{document}
\title{Symplectic Stiefel manifold: tractable metrics, second-order geometry and Newton's methods\thanks{This work was first publicly presented at the GAMM Annual Meeting in Magdeburg, Germany, March 18-22, 2024. Part of this work was initiated when NTS was with Universit\"{a}t Augsburg and most of it was done when BG and NTS were visiting the Vietnam Institute for Advanced Study in Mathematics (VIASM) whose supports and hospitalities are gratefully acknowledged. BG was supported by the Young Elite Scientist Sponsorship Program by CAST and the National Natural Science Foundation of China (grant No.~12288201).}}

\titlerunning{Tractable metrics and Newton's methods on the symplectic Stiefel manifold}   


\author{Bin\,Gao \and Nguyen Thanh Son \and Tatjana\,Stykel}

\institute{Bin Gao \at
    State Key Laboratory of Scientific and Engineering Computing, Academy of Mathematics and Systems Science, Chinese Academy of Sciences, 100190 Beijing, China \\
    \email{gaobin@lsec.cc.ac.cn};
    \and Nguyen Thanh Son \at
    Department of Mathematics and Informatics, Thai Nguyen University of Sciences, 24118 Thai Nguyen, Viet Nam\\
    \email{ntson@tnus.edu.vn};
    \and Tatjana Stykel \at
    Institut f\"{u}r Mathematik and Centre for Advanced Analytics and Predictive Sciences (CAAPS), Universit\"{a}t Augsburg, Universit\"{a}tsstra\ss e 12a, 86159 Augsburg, Germany\\
    \email{stykel@math.uni-augsburg.de} 
}

\date{Received: date / Accepted: date}

\maketitle

\begin{abstract}
    Optimization under the symplecticity constraint is an approach for solving various problems in quantum physics and scientific computing. Building on the results that this optimization problem can be transformed into an unconstrained problem on the symplectic Stiefel manifold, we construct geometric ingredients for Riemannian optimization with a~new family of Riemannian metrics called tractable metrics and develop Riemannian Newton schemes. The newly obtained ingredients do not only generalize several existing results but also provide us with freedom to choose a suitable metric for each problem. To the best of our knowledge, this is the first try to develop the explicit second-order geometry and Newton's methods on the symplectic Stiefel manifold. For the Riemannian Newton method, we first consider novel operator-valued formulas for computing the Riemannian Hessian of a~cost function, which further allows the manifold to be endowed with a weighted Euclidean metric that can provide a preconditioning effect. We then solve the resulting Newton equation, as the central step of Newton's methods, directly via transforming it into a~saddle point problem followed by vectorization, or iteratively via applying any matrix-free iterative method either to the operator Newton equation or its saddle point formulation. Finally, we propose a hybrid Riemannian Newton optimization algorithm that enjoys both global convergence and quadratic/superlinear local convergence at the final stage. Various numerical experiments are presented to validate the proposed methods.
    \keywords{Symplectic Stiefel manifold, tractable metric, Riemannian Hessian, Riemannian Newton methods, hybrid Newton method}
    \PACS{32C25 \and 65K05 \and 90C30}
\end{abstract}
 

\section{Introduction}
Appearing as a representation of a symplectic map	
between two finite-di\-men\-sio\-nal real symplectic vector spaces of dimensions $2k$ and $2n$, a~matrix $S \in \Rbnk$ with $1\leq k \leq n$ is called a~\emph{symplectic matrix} if there holds 
	\begin{equation}\label{eq:Sympl_Mat}
		S^TJ_{2n}S = J_{2k}\quad\mbox{ with }\quad J_{2n} = \begin{bmatrix}
			0&I_n\\-I_n&0
		\end{bmatrix},
	\end{equation}
where $I_n$ denotes the $n\times n$ identity matrix. It was proven in \cite[Prop.~3.1]{GSAS21} that the set of symplectic matrices, denoted by $\Spkn$, is a differentiable manifold called the \emph{symplectic Stiefel manifold}. In the case of square matrices, i.e., $k = n$, this set, denoted by $\Spn$, with the matrix multiplication additionally forms a~Lie group. It is worth noting that the symplectic Stiefel manifold is unbounded, e.g., $\left[\begin{smallmatrix}
			a&0 \\ 0&1/a
		\end{smallmatrix}\right]$ is a~symplectic matrix for any $a\in\mathbb{R}\setminus\{0\}$.  
	
An~optimization problem with the symplecticity constraint 
	\begin{equation}\label{eq:opt_prob}
		\min_{X\in \Spkn}f(X)
	\end{equation}
appears naturally in various applications. For instance, minimizing a trace function on the symplectic Stiefel manifold is the central step for computing symplectic eigenvalues and eigenvectors of symmetric positive-definite (spd) matrices \cite{SonAGS21,SonSt22}. It also helps to minimize the projection error in proper symplectic decomposition, a popular method for structure-preserving model reduction of Hamiltonian systems \cite{PengM16,BendZ22,GSS24}. In quantum physics, optimal control of symplectic gates can be formulated as a special case of \eqref{eq:opt_prob} with $k = n$, e.g.,  \cite{WuCR08,WuCR10}.
	
As the constraint nicely constitutes a differentiable manifold, one can reformulate the optimization problem as an unconstrained problem on the Riemannian manifold $\Spkn$ after equipping this manifold with an appropriate metric, e.g., canonical-like and Euclidean metrics~\cite{GSAS21,GSAS21a}. Accordingly, other geometric objects such as normal space, orthogonal projections, and the Riemannian gradient of the cost function can be constructed. Moreover, several retractions, which are indispensable in Riemannian optimization, have been derived in \cite{GSAS21,BendZ21,OviH23,GSS24,JenZ24}, 
and Riemannian gradient descent (RGD) algorithms for solving the minimization problem~\eqref{eq:opt_prob} have also been developed there. Recently, Riemannian conjugate gradient (RCG) methods have been adapted to the symplectic Stiefel manifold $\Spkn$ in~\cite{Sato23}, and a penalty method~\cite{XiaoLK24} has been applied to solving the minimization problem~\eqref{eq:opt_prob}. On the one hand, the examples presented in \cite{GSAS21,BendZ22,GSS24,JenZ24}  show the potential applications of these algorithms for solving different optimization problems with the symplecticity constraint. On the other hand, they also reveal that these first-order schemes often suffer from slow convergence at the final phase when the iterates are relatively close to the optimal solution. This observation is expected, as it has been shown in \cite[Thm.~4.5.6]{AbsiMS08}, that the RGD method in general converges at a~linear rate. The situation is even worse when the Euclidean Hessian of the cost function is far from good conditioning. 
	
To pursue fast convergence, we delve into two directions: constructing an~exqui\-site Riemannian metric to improve the performance of RGD methods at the early phase; and developing Newton's methods (in the neighborhood of local solutions) at the final phase to bring a high-order convergence rate over gradient methods. Combining these two directions can further lead to an efficient hybrid method. A~brief survey on the existing methods for Riemannian optimization on the symplectic Stiefel manifold is given in Table~\ref{tab:methods}. 

\vspace{-2mm}
\begin{table}[htbp]
    \centering
    \caption{Existing methods for Riemannian optimization on the symplectic Stiefel manifold\label{tab:methods}}
    \begin{tabular}{crrrr}
        \toprule
          & Geometry & Metric & Retraction & Method \\
        \midrule
        \cite{GSAS21,SonAGS21}
        &  1st order & canonical-like & quasi-geodesic, Cayley & RGD\\
        \cite{GSAS21a} &  1st order  & Euclidean & quasi-geodesic, Cayley & RGD\\
        \cite{GSS24} &  1st order  & Euclidean & SR decomposition & RGD\\
        \cite{BendZ21,BendZ22} &  1st order & pseudo-Riemannian & exponential, Cayley & RGD\\
        \cite{OviH23} &  1st order & canonical-like & a family including Cayley & RGD\\
        \cite{Sato23} & 1st order & canonical-like & Cayley & RCG\\\cmidrule{1-5}
        \cite{JenZ24} & 2nd order & right-invariant & Cayley & trust region\\ 
        this work & 2nd order & a family of tractable & SR decomposition, Cayley& Newton\\ 
        \bottomrule
    \end{tabular}
\end{table}
\vspace{-5mm}

\paragraph{Contribution}
In this paper, along the first direction, we introduce a~new family of metrics---so-called tractable metrics, which includes the known metrics, e.g., canonical-like and Euclidean metrics---and establish the necessary geometric ingredients for Riemannian optimization on the symplectic Stiefel manifold. This allows Riemannian preconditioning by freely choosing a~suitable metric depending on the cost function and in turn significantly accelerate the convergence of existing RGD schemes. 

Following the second direction, we investigate Riemannian Newton methods on the symplectic Stiefel manifold $\Spkn$. For general Riemannian manifolds, this topic has been discussed in several works, see, e.g., \cite{Smit94,AbsiMS08,Boumal23} and references therein. The main challenges of these methods are the computation of the Riemannian Hessian of the cost function and a~numerical procedure for solving the resulting Newton equation. These computations do not only depend on the
geometry of the manifold but also on the chosen metric. 
To the best of our knowledge\footnote{
When we were preparing the paper, we noticed that there was an independent work~\cite{JenZ24} available online, which considered the Riemannian Hessian on the symplectic Stiefel manifold under a~right-invariant metric. The differences between ours and~\cite{JenZ24} are several folds. Firstly, we propose a family of metrics, which includes the known metrics (canonical-like and Euclidean) as special cases. Secondly, we adopt a novel operator-valued formula for (explicitly) constructing the Riemannian Hessian in matrix form, which is able to match with more general metrics. Thirdly, we propose Newton-type methods with convergence guarantees. The work~\cite{JenZ24} developed, however, the Riemannian Hessian under a~right-invariant metric by involving the Christoffel symbols and by computing the derivative of the Riemannian gradient, which is computationally intricate and does not admit an explicit expression. In addition, it was concerned with Riemannian trust-region methods.}, 
the (explicit) second-order geometry of the symplectic Stiefel manifold has not been considered so far except for the case $k=n$, see \cite{BirtCC20}.
Although there was an~attempt in~\cite[Sect.~7.7]{Boumal23} to reach a~general formulation for Riemannian Hessians on general Riemannian manifolds, it was confined to the Euclidean metric only.
In order to address this issue for different Riemannian metrics, the~operator-valued framework recently presented in~\cite{Ngu23} turns out to be helpful. In that work, the main geometric tools for optimization are constructed based on operator-valued expressions for a~general embedded submanifold endowed with a~tractable metric, which can be represented via a~weighted Euclidean metric on the ambient space with a~varying spd weighting matrix. This fits well with our goal as it encompasses both the known canonical-like and (constant) weighted Euclidean metrics. Hence, we derive the explicit Riemannian Hessian with the aid of operator-valued expressions. When it is restricted to the Euclidean metric, the operator-valued formulas reduce to the known results for general Riemannian manifolds derived in, e.g., \cite[Sect.~5.3]{AbsiMS08} and \cite[Sect.~7.7]{Boumal23}. If it is further restricted to the special case $k=n$, we exactly recover the known Riemannian Hessian established in~\cite{BirtCC20} on the symplectic group $\Spn$. Moreover, one can consider a~weighted Euclidean metric, possessing a preconditioning effect by exploiting the second-order information of the cost function, e.g., \cite{ShuA23,GaoPY2023}.

To realize the benefit of the achieved Riemannian Hessian of the cost function on the symplectic Stiefel manifold, numerical solution of the Newton equation is, by no means, less important. For solving this equation, either direct or iterative methods can be used. We first convert the Newton equation to a~saddle point problem in operator form and then, via vectorization, derive an~explicit formula for its solution. However, such an~approach is computationally expensive in a~large-scale setting. Alternatively, the Newton equation or its saddle point formulation can approximately be solved by an~iterative method which does not require an~explicit construction of the coefficient matrix and relies instead on the computation of matrix-vector products or evaluation of the underline linear operator.

The Riemannian Newton method is well-known to deliver (at least) quadratic local convergence under some appropriate conditions, see \cite{Smit94}, \cite[Thm.~6.3.2]{AbsiMS08} and \cite[Thm.~6.7]{Boumal23} for detail. Nevertheless, unlike the RGD 
method, the global convergence is not guaranteed. To deal with this, i.e., to combine the global convergence with a~fast convergence rate in one algorithm,  the Riemannian trust-region method has been developed in, e.g., \cite{AbsiBG07,JenZ24} which locally relaxes the Newton equation to the optimization of a~second-order approximate model. Other Riemannian Newton or Newton-type methods, that ensure global convergence, with specific applications can be found in \cite{ZhaoBJ15,ZhaoBJ18,BortFFY20,BortFF22,XuNgB22}, to name a~few. A common feature of these algorithms is that a~criterion was used to carefully switch between a~gradient descent method and a~Newton iteration combined with a~damping to ensure a~sufficient decrease in the cost function or a merit function. 
Inspired by \cite{SatoI13,IzmaS14}, we follow a~slightly different approach and develop a~hybrid Riemannian Newton method on the symplectic Stiefel manifold, where the early stage employs a~RGD method from~\cite{GSAS21} and the final stage carries out the proposed Riemannian Newton methods. Moreover, we prove its global convergence and local convergence rates based on Newton and inexact Newton iteration. Note that the analysis is able to encompass general manifolds and can be naturally extended to optimization on other manifolds as well as to the detection of a~singularity point of a~vector field. Except for \cite[Prop.~8]{RingW12}, which requires the equicontinuity of the derivative of the retraction used, to the best of our knowledge, this is the first attempt to derive the convergence properties of inexact Newton method on its own with standard assumption and as a part of a hybrid algorithm, which enables global convergence, on Riemannian manifolds. 

In order to validate the performance of the proposed Riemannian optimization methods, we consider the problem of finding symplectic solutions of a~matrix least squares problem and minimization of trace cost functions. The numerical experiments show that both preconditioned RGD schemes and Riemannian Newton algorithms outperform the existing schemes and that the proposed hybrid Riemannian Newton method converges faster to a solution at the final stage regardless of the starting point. 

\paragraph{Organization} 
After introducing the notation, the rest of this paper is organized as follows. 
In section~\ref{sec:Geometry_sympl_Stiefel}, we consider the geometry of the symplectic Stiefel manifold by introducing a~new family of Riemannian metrics for which both the canonical-like and Euclidean metrics can be considered as a special case. Notably, important geometric tools---such as orthogonal projections onto the tangent and normal spaces, and the Riemannian gradient---can be formulated using operator-valued framework. In section~\ref{sec:Rhessian}, we derive the Riemannian Hessian formulas corresponding to both the canonical-like and weighted Euclidean metrics. In section~\ref{sec:Solve_NewtonEq}, we develop Riemannian Newton methods on the symplectic Stiefel manifold and discuss how to solve the Newton equation in detail. The inexact and hybrid Riemannian Newton methods are also presented. The convergence properties of the proposed algorithms are studied in section~\ref{sec:Convergence}. Numerical examples are provided in section~\ref{sec:Numer} and concluding remarks are given in section~\ref{sec:Conclusion}.   

\paragraph{Notation} 
We use $\symset(m)$ and $\skewset(m)$ to denote the sets of all $m\times m$ real symmetric and skew-symmetric  matrices, respectively. For a square matrix $A$, $\sym(A)=\frac{1}{2}(A+A^T)$ and  $\skew(A)=\frac{1}{2}(A-A^T)$ denote its symmetric and skew-symmetric part, respectively, and $\tr(A)$ denotes the trace of $A$. We use $\frac{\d}{\d t}(\cdot)$, $\D (\cdot)$ and $\D_Z(\cdot)$ to denote the classical derivative of a function of one real variable $t$, the Fr\'{e}chet derivative of a~mapping between two Euclidean spaces, and the directional derivative along a~vector~$Z$, respectively. For a smooth function $f$ defined on the embedded submanifold of interest, $\nabla \bar{f}(X)$ and $\nabla^2 \bar{f}(X)$ represent, respectively, the classical gradient and Hessian of a~smooth extension $\bar{f}$ of $f$ to the ambient space of the submanifold in a~neighborhood of~$X$, or the ambient gradient and ambient Hessian for short. The standard Euclidean metric defined on the ambient space is denoted by~$\langle\cdot,\cdot\rangle$; this notation can be accompanied by subscripts depending on the context. Furthermore, we use $\|\cdot\|_{\mathrm{F}}$ to denote the Frobenius matrix norm. 


\section{Riemannian geometry under tractable metrics}
\label{sec:Geometry_sympl_Stiefel}
In this section, we briefly review the symplectic Stiefel manifold
\[
\Spkn=\big\{X\in\Rbnk\ : \ X^TJ_{2n}X = J_{2k}\big\},
\] 
by not only collecting necessary results from \cite{GSAS21a,GSAS21} but also introducing a~family of Riemannian metrics on $\Spkn$ which generalizes both the canonical-like and Euclidean metrics studied in \cite{GSAS21a,GSAS21}. The corresponding geometric ingredients can then be derived by using an~operator-valued framework developed in \cite{Ngu23}, which is crucial for constructing the Riemannian Newton method.

For the purpose of introducing operator-valued operations, let us first recall that $\Spkn$ is a~closed embedded submanifold of the Euclidean space $\R^{2n\times 2k}$ of dimension \mbox{$4nk-k(2k-1)$}. This fact immediately follows from the submersion theorem \cite[Prop.~3.3.3]{AbsiMS08} applied to a~smooth mapping	
\[
\arraycolsep=2pt
\begin{array}{rccc}
F\;: \;\;&\Rbnk&\longrightarrow &\skewset(2k)\\ 
&X&\longmapsto&  X^TJ_{2n}X - J_{2k},
\end{array}
\]
which determines the symplectic Stiefel manifold $\Spkn$, see \cite[Prop.~3.1]{GSAS21}. The Fr\'{e}chet derivative of $F$ at $X\in \Spkn$  is
given by
\begin{equation}\label{eq:DF}
\arraycolsep=2pt
\begin{array}{rccc}
	\D F_X\; :\;\;&\Rbnk&\longrightarrow &\skewset(2k)\\ 
	&Z&\longmapsto& X^TJ_{2n}Z + Z^TJ_{2n}X.
\end{array}
\end{equation}
The kernel of $\D F_X$ defines the \emph{tangent space} $\TX$ to $\Spkn$ at $X$, i.e.,
\begin{equation}\label{eq:tangentspace_1}
\TX = \ker(\D F_X) = \big\{Z \in \Rbnk\ :\ X^TJ_{2n}Z + Z^TJ_{2n}X = 0\big\}.
\end{equation}
This space can also be characterized as 
\begin{equation}
\label{eq:tangentspace_2}
\TX =\big\{XJ_{2k}W + J_{2n}X_{\perp} K\ :\  W\in \symset(2k),\ K\in \R^{(2n-2k)\times 2k}\big\},
\end{equation}
where the orthogonal complement $X_\perp\in\R^{2n\times(2n-2k)}$ has full rank and satisfies the relation $X^TX_\perp=0$.

The adjoint operator to  $\D F_X$ with respect to the Euclidean metric is defined as the linear operator $\D F_X^* : \skewset(2k)\rightarrow \Rbnk$ which for all $Z\in\Rbnk$ and $\varOmega\in\skewset(2k)$, satisfies 
\[
\langle \D F_X(Z),\varOmega\rangle_{\skewset(2k)}= \langle Z,\D F_X^*(\varOmega)\rangle_{\Rbnk}.
\]
A~direct calculation using $\varOmega^T=-\varOmega$ and $J_{2n}^T=-J_{2n}$ yields
\[
\langle \D F_X(Z),\varOmega\rangle_{\skewset(2k)} = 2\,\tr(Z^TJ_{2n}^TX\varOmega)=\langle Z, 2J_{2n}^TX\varOmega\rangle_{\Rbnk},
\] 
which implies that
\begin{equation}\label{eq:DF_star_detail}
\D F_X^*(\varOmega) = 2\,J_{2n}^TX\varOmega = -2\,J_{2n}^{}X\varOmega.
\end{equation}
It is ready to know that the adjoint operator $\D F_X^*$ is injective. 

%
\subsection{Tractable metrics}
Equipping $\TX$ with an inner product $\langle\cdot,\cdot\rangle_X$, which varies
smoothly with~$X$, we turn $\Spkn$ into a Riemannian manifold. Inspired by \cite{Ngu23,ShuA23},  for an~spd matrix $\Mb_X\in\R^{2n\times 2n}$ smoothly depending on~$X$, we introduce a family of Riemannian metrics on $\Spkn$, so-called \emph{tractable metrics}, induced by the standard Euclidean metric  $\langle\cdot,\cdot\rangle$ on the ambient space $\R^{2n\times 2k}$ as 
\begin{equation}\label{eq:metricM}
g_{\Mb_X}(Z_1,Z_2):= \langle Z_1,\Mb_{X}Z_2\rangle = \tr(Z_1^T\Mb_{X}Z^{}_2),
\qquad Z_1,Z_2\in\TX.
\end{equation} 
It induces the norm $\|Z\|_{\Mb_X}=\sqrt{g_{\Mb_X}\!(Z,Z)}$ for a~tangent vector $Z\in\TX$. For a~linear operator $\mathcal{A}:\TX\to \TX$, we define the operator norm induced by the metric $g_{\Mb_X}$ as 
\[
\|\mathcal{A}\|_{\Mb_X} 
    := \sup\big\{\|\mathcal{A}(Z)\|_{\Mb_X}\ :\ Z\in \TX,\ \|Z\|_{\Mb_X} = 1\big\}. 
\]

Furthermore, the \emph{normal space} to $\Spkn$ at $X$ with respect to a~tractable metric $g_{\Mb_X}$ is defined by 
\[
\NX := \bigl\{ N\in\mathbb{R}^{2n\times 2k}\; : \; g_{\Mb_X}(N,Z) 
= 0 \text{ for all } Z\in \TX\bigr\}.
\]
This space can be characterized as follows.

\begin{proposition}\label{prop:normalMX}
The normal space to $\Spkn$ at $X\in\Spkn$ with respect to the metric $g_{\Mb_X}\!$ defined in~\eqref{eq:metricM}
can be represented as
\begin{equation}\label{eq:normal}
	\NX= \big\{\Mb_{X}^{-1}J_{2n}X\varOmega \; : \; \varOmega\in\skewset(2k)\big\}.
\end{equation}
\end{proposition}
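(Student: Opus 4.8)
The plan is to reduce the defining condition of $\NX$ to a Euclidean orthogonality relation and then invoke the standard kernel--range duality for linear maps between finite-dimensional inner-product spaces. Since $\D F_X$ and its adjoint $\D F_X^*$ have already been computed in \eqref{eq:DF} and \eqref{eq:DF_star_detail}, and $\TX=\ker(\D F_X)$ by \eqref{eq:tangentspace_1}, everything we need is in place.

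First, I would use the symmetry of $\Mb_X$ to move the weighting matrix: for any $N\in\Rbnk$ and $Z\in\TX$,
\[
g_{\Mb_X}(N,Z)=\langle N,\Mb_X Z\rangle=\langle \Mb_X N,Z\rangle ,
\]
so that $N\in\NX$ if and only if $\Mb_X N$ is Euclidean-orthogonal to $\TX=\ker(\D F_X)$, i.e.\ $\Mb_X N\in(\ker \D F_X)^{\perp}$. Over finite-dimensional spaces, $(\ker \D F_X)^{\perp}=\mathrm{range}(\D F_X^*)$, and by \eqref{eq:DF_star_detail} we have $\D F_X^*(\varOmega)=-2J_{2n}X\varOmega$; since $\varOmega$ runs over all of $\skewset(2k)$, the factor $-2$ is irrelevant and
\[
(\ker \D F_X)^{\perp}=\big\{J_{2n}X\varOmega \; : \; \varOmega\in\skewset(2k)\big\}.
\]
Combining the two facts, $N\in\NX$ exactly when $\Mb_X N=J_{2n}X\varOmega$ for some $\varOmega\in\skewset(2k)$, and because $\Mb_X$ is spd, hence invertible, this is equivalent to $N=\Mb_X^{-1}J_{2n}X\varOmega$, which is \eqref{eq:normal}.

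As a sanity check I would also note that the parametrization by $\varOmega$ is faithful: $\D F_X^*$ is injective, so $\dim\mathrm{range}(\D F_X^*)=\dim\skewset(2k)=k(2k-1)=2n\cdot 2k-\dim\TX$, and left-multiplication by the invertible matrix $\Mb_X^{-1}$ preserves dimension, so $\varOmega\mapsto\Mb_X^{-1}J_{2n}X\varOmega$ is a linear isomorphism from $\skewset(2k)$ onto $\NX$. There is no genuinely hard step here; the only points that need a word of justification are the kernel--range duality (valid because all spaces are finite-dimensional) and the invertibility of $\Mb_X$ (immediate from positive definiteness). An alternative route, avoiding \eqref{eq:DF_star_detail}, would be to pair a general $N$ against the explicit generators $XJ_{2k}W+J_{2n}X_\perp K$ of $\TX$ from \eqref{eq:tangentspace_2} and solve the resulting orthogonality conditions, but the adjoint-based argument is shorter.
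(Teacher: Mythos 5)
Your proof is correct, but it takes a genuinely different route from the paper's. The paper fixes the nonsingular basis matrix $E=[XJ_{2k},\,J_{2n}X_\perp]$ from \cite[Lem.~3.2]{GSAS21}, writes $N=\Mb_X^{-1}J_{2n}E(E^TJ_{2n}E)^{-1}\left[\begin{smallmatrix}\varOmega\\K_N\end{smallmatrix}\right]$ and $Z=E\left[\begin{smallmatrix}W\\K_Z\end{smallmatrix}\right]$ for the normal and tangent candidates, and reads off from $g_{\Mb_X}(N,Z)=\tr(\varOmega^T W)+\tr(K_N^T K_Z)$ that $\varOmega$ must be skew-symmetric and $K_N=0$. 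You instead transfer the $\Mb_X$-weight onto $N$, identify $\NX$ with $\Mb_X^{-1}\big(\ker \D F_X\big)^\perp$, and then invoke the finite-dimensional identity $(\ker \D F_X)^\perp=\mathrm{range}(\D F_X^*)$ together with the explicit adjoint $\D F_X^*(\varOmega)=-2J_{2n}X\varOmega$ from \eqref{eq:DF_star_detail}. Your argument is shorter, more conceptual, and arguably closer in spirit to the operator-valued framework the paper itself imports from \cite{Ngu23} (indeed, the same observation underlies the derivation of the projection formula~\eqref{eq:Du_orth_proj}); the paper's basis computation is more concrete and simultaneously hands you the parametrization used throughout Section~4 of \cite{GSAS21}. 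Both yield \eqref{eq:normal}, and your dimension count via injectivity of $\D F_X^*$ and invertibility of $\Mb_X^{-1}$ is a nice extra check that the parametrization is faithful.
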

\begin{proof}		
Due to \cite[Lem.~3.2]{GSAS21}, the matrices $E=[XJ_{2k},\; J_{2n}X_{\perp}]$ and 
\[
E^TJ_{2n}E=\begin{bmatrix}J_{2k} & 0 \\ 0& X_{\perp}^TJ_{2n}X_{\perp}^{}\end{bmatrix}
\]
are both nonsingular in $\Rbnn$. Then any $N\in\R^{2n\times 2k}$ can be represented as 
\[
N=\Mb_{X}^{-1}J_{2n}E(E^TJ_{2n}E)^{-1}\begin{bmatrix}\varOmega \\ K_N\end{bmatrix}
\] 
with $\varOmega\in\R^{2k\times 2k}$ and $K_N\in\R^{(2n-2k)\times 2k}$. Furthermore, taking into account \eqref{eq:tangentspace_2},  any $Z\in\TX$ can be written as 
$Z=E\left[\begin{smallmatrix}
    W\\K_Z
\end{smallmatrix}\right]$
with $W\in \symset(2k)$ and $K_Z\in\R^{(2n-2k)\times 2k}$.
Then the normal space condition $g_{\Mb_X} (N,Z)=0$ implies that
\[
0=\tr\big(N^T\Mb_{X}Z\big) 
=  \tr\big(\varOmega^TW\big) +\tr\big(K_N^TK_Z^{}\big)
\] 
for all $W\in \symset(2k)$ and $K_Z\in\R^{(2n-2k)\times 2k}$. 
This is equivalent to $\varOmega\in\skewset(2k)$ and $K_N=0$. Thus, \eqref{eq:normal} holds. \qed  
\end{proof}

Any matrix $Y\in\Rbnk$ can additively be decomposed as $$Y=\proj(Y)+\projn(Y),$$  
where $\proj$ and $\projn$ denote the \emph{orthogonal projections} onto the tangent and normal spaces, respectively.  Using  $\D F_X$ and its adjoint  $\D F_X^*$,  the projection  $\proj$ with respect to a tractable metric $g_{\Mb_X}$ can be represented as
\begin{equation}\label{eq:Du_orth_proj}
\proj(Y) = Y-\Mb_X^{-1}\D F_X^*(\D F_{X}\Mb_X^{-1}\D F_X^*)^{-1}\D F_{X}(Y),
\end{equation}
see~\cite[Prop.~3.1]{Ngu23}. Note that the invertibility of $\D F_{X}\Mb_X^{-1}\D F_X^*$ follows from the su\-rjec\-ti\-vi\-ty of  $\D F_{X}$ proved in~\cite{GSAS21} and the injectivity of $\D F_X^*$ which can be verified by straightforward calculations. 

Before moving on, we introduce a~Lyapunov operator which is crucial for the development of geometric ingredients on $\Spkn$. Given $X\in\Spkn$, we define
\begin{equation}\label{eq:Lyap_operator}
\arraycolsep=2pt
\begin{array}{rccl}
\mathrm{Lyap}_{X,\Mb_X}
 \; :\;\;&\mathbb{R}^{2k\times 2k}&\longrightarrow &\mathbb{R}^{2k\times 2k}\\ 
	&\varOmega&\longmapsto& \big(X^T\!J_{2n}^T\Mb_X^{-1}J_{2n}^{}X\big)\,\varOmega + \varOmega \,\big(X^T\!J_{2n}^T\Mb_X^{-1}J_{2n}^{}X\big),  
\end{array}
\end{equation}
where the spd matrix $\Mb_X$ stems from the tractable metric. Note that the coefficient matrix $X^T\!J_{2n}^T\Mb_X^{-1}J_{2n}^{}X$ is symmetric and positive definite. Hence, it follows from~\cite[Thm.~4.4.6]{HornJ91} that the Lyapunov operator $\mathrm{Lyap}_{X,\Mb_X}$ is invertible. 

The following proposition provides a~matrix expression for the projection $\proj$ without invoking derivatives.

\begin{proposition}\label{prop:projPX}
Given $X\in\Spkn$ and $Y\in\R^{2n\times 2k}$, the orthogonal projection of~$Y$ onto $\TX$  with respect to the tractable metric $g_{\Mb_X}\!$ has the form
\begin{equation}\label{eq:projMX}
	\proj(Y) = Y-\Mb_X^{-1}J_{2n}X \varOmega_{X,Y}, 
\end{equation}
where $\varOmega_{X,Y}\in\skewset(2k)$ is the solution to the Lyapunov equation
\begin{equation}\label{eq:Lyap} 
	\mathrm{Lyap}_{X,\Mb_X}(\varOmega)=2\,\skew(X^T\!J_{2n}^TY).
\end{equation}
\end{proposition}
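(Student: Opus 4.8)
The plan is to start from the derivative-based formula~\eqref{eq:Du_orth_proj} for $\proj$ and simply substitute the explicit expressions for $\D F_X$ and $\D F_X^*$ obtained in~\eqref{eq:DF} and~\eqref{eq:DF_star_detail}. Writing $\proj(Y)=Y-\Mb_X^{-1}\D F_X^*(\varOmega_{X,Y})$ with $\varOmega_{X,Y}:=(\D F_X\Mb_X^{-1}\D F_X^*)^{-1}\D F_X(Y)$, and using $\D F_X^*(\varOmega)=-2J_{2n}X\varOmega$, we immediately get the matrix form~\eqref{eq:projMX} up to absorbing the factor $-2$ into $\varOmega_{X,Y}$ (so that the $\varOmega_{X,Y}$ appearing in the statement is $-2$ times the raw expression, or equivalently we just rename; I would keep track of the sign carefully but it is harmless). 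Since $\D F_X^*$ has range in $\skewset(2k)$ composed with $-2J_{2n}X\cdot$, and $\varOmega_{X,Y}$ is the image of an element of $\skewset(2k)$ under an invertible operator on $\skewset(2k)$, it automatically lies in $\skewset(2k)$; this settles the claim $\varOmega_{X,Y}\in\skewset(2k)$.

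Next I would identify the equation that $\varOmega_{X,Y}$ solves. By definition it satisfies $(\D F_X\Mb_X^{-1}\D F_X^*)(\varOmega_{X,Y}) = \D F_X(Y)$ (again modulo the bookkeeping constant). I would compute the left-hand side operator explicitly: for $\varOmega\in\skewset(2k)$,
\[
\D F_X\big(\Mb_X^{-1}\D F_X^*(\varOmega)\big)
= \D F_X\big(-2\Mb_X^{-1}J_{2n}X\varOmega\big)
= -2\big(X^TJ_{2n}\Mb_X^{-1}J_{2n}X\varOmega + \varOmega^T X^T J_{2n}^T\Mb_X^{-1}J_{2n}^TX\big),
\]
and then use $J_{2n}^T=-J_{2n}$, $\varOmega^T=-\varOmega$, and the symmetry of $X^TJ_{2n}^T\Mb_X^{-1}J_{2n}X$ to recognize that, up to the scalar $-2$, this is exactly $\mathrm{Lyap}_{X,\Mb_X}(\varOmega)$ from~\eqref{eq:Lyap_operator}. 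Similarly, the right-hand side $\D F_X(Y) = X^TJ_{2n}Y + Y^TJ_{2n}X = X^TJ_{2n}Y - (X^TJ_{2n}Y)^T = 2\,\skew(X^TJ_{2n}Y)$; combined with $J_{2n}^T=-J_{2n}$ this matches $-2\,\skew(X^TJ_{2n}^TY)$, so after cancelling the common factor $-2$ we obtain precisely the Lyapunov equation~\eqref{eq:Lyap}. The solvability and uniqueness of $\varOmega_{X,Y}$ is guaranteed by the invertibility of $\mathrm{Lyap}_{X,\Mb_X}$ already noted after~\eqref{eq:Lyap_operator}, which in turn rests on $X^TJ_{2n}^T\Mb_X^{-1}J_{2n}X$ being spd (a consequence of $\Mb_X$ being spd and $J_{2n}X$ having full column rank).

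There is no serious obstacle here; the proof is essentially a careful substitution plus sign bookkeeping. The one point that requires a little care is the consistent tracking of the factor $-2$ coming from $\D F_X^*$ and the factor $2$ coming from symmetrization, so that the statement's normalization of $\varOmega_{X,Y}$ and the $2\,\skew(\cdot)$ on the right-hand side of~\eqref{eq:Lyap} come out exactly as written; I would also remark explicitly that $\varOmega_{X,Y}$ is well-defined because $\mathrm{Lyap}_{X,\Mb_X}$ is invertible, and that $\Mb_X^{-1}J_{2n}X\varOmega_{X,Y}$ is the normal-space component, consistent with the characterization~\eqref{eq:normal} in Proposition~\ref{prop:normalMX}. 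This last observation also provides a sanity check: $Y-\proj(Y)=\Mb_X^{-1}J_{2n}X\varOmega_{X,Y}\in\NX$ by~\eqref{eq:normal} since $\varOmega_{X,Y}\in\skewset(2k)$, and the Lyapunov equation is exactly the condition that $\proj(Y)\in\TX$, i.e., $\D F_X(\proj(Y))=0$.
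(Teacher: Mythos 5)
Your proposal is correct and takes essentially the same route as the paper's proof: both start from the operator-valued formula~\eqref{eq:Du_orth_proj}, substitute the explicit expressions~\eqref{eq:DF} and~\eqref{eq:DF_star_detail}, identify $\D F_X\Mb_X^{-1}\D F_X^*$ (up to a factor $2$) with the Lyapunov operator~\eqref{eq:Lyap_operator}, and absorb the constant $-2$ into the definition of $\varOmega_{X,Y}$. One tiny typo in your display (the final factor should read $J_{2n}X$, not $J_{2n}^TX$) is harmless since you later apply $J_{2n}^T=-J_{2n}$ and the sign bookkeeping comes out right.
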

\begin{proof}
First, it follows from \eqref{eq:DF} that
\[
\D F_{X}(Y) = -(X^T\!J_{2n}^TY - Y^TJ_{2n}^{}X) = -2\,\skew(X^T\! J_{2n}^TY).
\]	
Further, using 	\eqref{eq:DF} and \eqref{eq:DF_star_detail}, we obtain that for all $\varOmega\in\skewset(2k)$,
\begin{align*}
	\D F_{X}\Mb_{X}^{-1}\D F_X^*(\varOmega) &= 2\,\big(X^T\!J_{2n}^T\Mb_{X}^{-1}J_{2n}^{}X\,\varOmega +\varOmega\, X^T\!J_{2n}^T\Mb_{X}^{-1} J_{2n}^{}X\big).
\end{align*}
Using the above expressions, the equation 
$\D F_{X}\Mb_{X}^{-1}\D F_X^*(\varOmega)=\D F_{X}(Y)$ 
is equi\-valent to the Lyapunov equation \eqref{eq:Lyap} via the substitution $\varOmega_{X,Y} :=-2\,\varOmega$. Moreover, we have
$\varOmega=(\D F_{X}\Mb_{X}^{-1}\D F_X^*)^{-1}\D F_{X}(Y)$. Finally, it follows from expressions~\eqref{eq:Du_orth_proj} and~\eqref{eq:DF_star_detail} that
\begin{align*}
    \proj(Y) &= Y - \Mb_X^{-1} \D F_X^* (\D F_{X}\Mb_{X}^{-1}\D F_X^*)^{-1}\D F_{X}(Y) \\
    &= Y -\Mb_X^{-1}\D F_X^*(\varOmega) \\
    &=  Y +2\,\Mb_X^{-1} J_{2n}^{}X\varOmega \\ 
    &=Y -\Mb_X^{-1} J_{2n}^{}X\varOmega_{X,Y}. 
\end{align*}
This completes the proof. \qed
\end{proof}

It is worth to note that the Lyapunov equation \eqref{eq:Lyap} has a~unique skew-symmetric solution $\varOmega_{X,Y}$ since the Lyapunov operator $\mathrm{Lyap}_{X,\Mb_X}$ is invertible and the right-hand side is skew-symmetric. The subscript~$Y$ in $\varOmega_{X,Y}$ is used to emphasize the dependence of the solution on $Y$ involved in the right-hand side.

Given $X\in\Spkn$, the \emph{Riemannian gradient} of a~continuously differentiable function \mbox{$f:\Spkn\rightarrow \R$} at $X$ with respect to a~tractable metric $g_{\Mb_X}$, denoted by $\rgrad{X}$, is defined as the unique element of $\TX$ which satisfies the condition
\[
g_{\Mb_X}(\rgrad{X} ,Z)= \langle\nabla\bar{f}(X), Z\rangle \qquad \text{for all }Z\in\TX.
\]
Based on \cite[Prop.~3.2]{Ngu23} or \cite[Eq.~(3.20)]{ShuA23}, the Riemannian gradient  can be represented as 
\begin{equation}\label{eq:Du_Rgradient}
\rgrad{X} = \proj\big(\Mb_X^{-1}\nabla\bar{f}(X)\big).
\end{equation} 

Given an spd matrix $\Mb_X$, we summarize the basic geometric ingredients for the symplectic Stiefel manifold $\Spkn$ in Table~\ref{tab:notion_summary} and give a geometric illustration in Figure~\ref{fig:illustration}. Notice that the tractable metric $g_{\Mb_X}$ along with the matrix $\Mb_X$ plays a~crucial role in the Riemannian geometry on $\Spkn$. Moreover, we observe from Figure~\ref{fig:illustration} that it is possible to consider preconditioning for the optimization problem~\eqref{eq:opt_prob} via a~specific metric.

\begin{figure}[tpb]
    \centering
    \includegraphics[width=.45\textwidth]{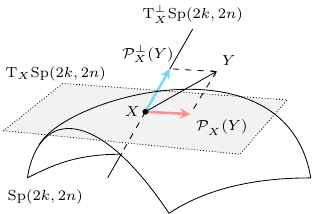}\qquad\quad
    \includegraphics[width=.45\textwidth]{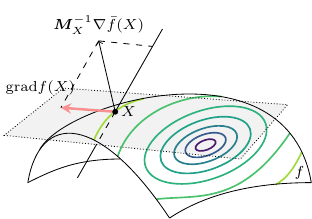}
    \caption{Geometric illustration of Riemannian ingredients on the symplectic Stiefel manifold\label{fig:illustration}}
\end{figure}

\begin{table}[tbp]
	\centering
	\caption{Geometric ingredients on the symplectic Stiefel manifold $\Spkn$ with respect to the metric 
 $g_{\Mb_X}$;
 $\varOmega_{X,Y}\in\skewset(2k)$ is the solution to the Lyapunov equation~\eqref{eq:Lyap}.}
	\label{tab:notion_summary}
	\begin{tabular}{lll}
		\toprule
		{\sc~} & {Notation} & {Expression} \\\midrule
		{metric} & {$g_{\Mb_X}(Z_1,Z_2)$}   & {$\langle Z_1,\Mb_{X}Z_2\rangle$}
            \\\cmidrule[.1pt](r){2-3}
		{tangent space}~~ & $\TX$  & {$XJ_{2k}W + J_{2n}X_{\perp} K, W\!\in\! \symset(2k), K\!\in \!\R^{(2n-2k)\times 2k}$} 
		\\\cmidrule[.1pt](r){2-3}
		{normal space}~~ & $\NX$  & {$\Mb_{X}^{-1}J_{2n}X\varOmega, \; \varOmega\in\skewset(2k)$} 
		\\\cmidrule[.1pt](r){2-3}
		\multirow{2}{*}{projection} & {$\proj(Y)$}    & $Y-\Mb_X^{-1}J_{2n}X\, \varOmega_{X,Y}$  \\
		& $\projn(Y)$  & $\Mb_X^{-1}J_{2n}X\, \varOmega_{X,Y}$
            \\\cmidrule[.1pt](r){2-3}
		{gradient} & $\grad f(X)$  & $\proj\big(\Mb_X^{-1}\nabla\bar{f}(X)\big)$  \\
		\bottomrule
	\end{tabular}
\end{table}

\begin{remark}[Preconditioning by metrics]
In various applications, preconditioned optimization methods show a~prominent acceleration to vanilla gradient descent methods, e.g., linear systems~\textup{\cite{KressnSV2016}}, matrix completion~\textup{\cite{BoumA15}}, and tensor completion~\textup{\cite{KasM2016}}. The trick is to come up with a~delicate metric that exploits the second-order information of objectives, reflecting a~preconditioning effect. In the same spirit, the authors in~\textup{\cite{AltPS23,GaoPY2023,ShuA23}} provide a~practical way to construct such a~metric for specific problems. It is worth to note that the new family of metrics defined in~\eqref{eq:metricM} benefits these advantages and allows us to fit the preconditioning framework. The Riemannian gradient
$\rgrad{X} = \proj\left(\Mb_X^{-1}\nabla\bar{f}(X)\right)$ can then be utilized to develop a~preconditioned RGD method through an~appropriate matrix~$\Mb_X$. 
\end{remark}

For some specific choices of the matrix $\Mb_X$, we obtain well-known 
metrics on the symplectic Stiefel manifold $\Spkn$ such as the canonical-like metric and the Euclidean metric studied in \cite{GSAS21a,GSAS21,GSS24}.
In the following subsections, we consider these two metrics in more detail and check how they are recovered by tailoring a~specific matrix $\Mb_X$. The recovered results are important to develop Riemannian Hessians in the next section.

%
\subsection{Canonical-like metric}
For a~parameter $\rho>0$ and the tangent vectors $Z_i=XJ_{2k}W_i+J_{2n}X_\perp K_i$ with \mbox{$W_i\in\symset(2k)$} and $K_i\in\R^{(2n-2k)\times 2k}$ for $i=1,2$, the \emph{canonical-like metric} is defined as 
\begin{equation}\label{eq:canonical_metric}
g_{\Mb_{X,c,\rho}}(Z_1,Z_2):= \frac{1}{\rho}  \tr(W_1^T W_2^{})+\tr(K_1^T K_2^{}) =\tr(Z_1^T\Mb_{X,c,\rho}Z_2^{}),
\end{equation}
where the spd matrix $\Mb_{X,c,\rho}$ is given by 
\begin{align}\label{eq:B_X2}
\Mb_{X,c,\rho}
&= \frac{1}{\rho}J_{2n}^{}XX^TJ_{2n}^T - X_\perp^{}(X_\perp^TJ_{2n}X_\perp^{})^{-2}X_\perp^T.
\end{align}
Note that imposing  an additional orthonormalization condition on $X_\perp$, for example,
$ X_\perp^TX_\perp^{}=I_{2n-2k}$ or 
\begin{equation}\label{eq:choice_Xperp}
\big(X_\perp(X_\perp^TJ_{2n}X_\perp)^{-1}\big)^T\big(X_\perp^{}(X_\perp^TJ_{2n}X_\perp)^{-1}\big)=I_{2n-2k},
\end{equation}
yields that $\Mb_{X,c,\rho}$ (and therefore the metric $g_{\Mb_{X,c,\rho}}$) is independent of $X_\perp$ and, hence, $\Mb_{X,c,\rho}$ varies smoothly with~$X$, see \cite[Prop.~4.1]{GSAS21}.  
Computing the inverse
\begin{equation}\label{eq:B_X_E_inverse}
\Mb_{X,c,\rho}^{-1} = \rho\, XX^T + J_{2n}X_\perp^{}X_\perp^TJ_{2n}^T
\end{equation}
and $\Mb_{X,c,\rho}^{-1}J_{2n}^{}X=\rho\,XJ_{2k}^{}$, we derive from \eqref{eq:normal}  the normal space to $\Spkn$ with respect to the canonical-like metric
\begin{equation}\label{eq:normal_space_canonical}
\NXc 
= \bigl\{ XJ_{2k}\,\varOmega  \enskip: \enskip \varOmega\in\skewset(2k)\bigr\}.
\end{equation}
Furthermore, the Lyapunov equation \eqref{eq:Lyap} with $\Mb_{X}$ replaced by $\Mb_{X,c,\rho}$ has the solution $\varOmega_{X,Y}=\frac{1}{\rho}\skew(X^T\!J_{2n}^TY)$. Inserting it into~\eqref{eq:projMX}, we obtain the orthogonal projection 
\begin{equation}\label{eq:orth_proj_canon}
\projc(Y) =  Y-XJ_{2k}\,\skew(X^T\!J_{2n}^TY), \qquad Y\in\Rbnk,
\end{equation}
which is, notably, independent of both $\rho$ and the choice of $X_\perp$. Note that the representation \eqref{eq:orth_proj_canon} is equivalent to that obtained in \cite[Prop.~4.3]{GSAS21}.
The Riemannian gradient of $f$ with respect to the canonical-like metric $g_{\Mb_{X,c,\rho}}$
can then be determined from \eqref{eq:Du_Rgradient} and \eqref{eq:orth_proj_canon} as
\begin{align}
\rgradc{X} & =  \projc\big(\Mb_{X,c,\rho}^{-1}\nabla\bar{f}(X)\big)\nonumber\\
& = \big(\rho\, XX^T + J_{2n}X_\perp^{}X_\perp^TJ_{2n}^T\big)\nabla\bar{f}(X)
-\rho\,XJ_{2k}\,\skew\big(J_{2k}^TX^T\nabla\bar{f}(X)\big)\nonumber\\
& =\rho\, XJ_{2k}\,\sym\big(J_{2k}^TX^T\nabla\bar{f}(X)\big)+ J_{2n}X_\perp^{}X_\perp^{T}J_{2n}^T\nabla \bar{f}(X),\label{eq:Rgrad_canon}
\end{align}
cf. \cite[Prop.~4.5]{GSAS21}. 

%
\subsection{Weighted Euclidean metric}
\label{ssec:euclM}
Setting $\Mb_{X} = \Mb$ to be a~constant spd matrix, we obtain the \emph{weighted Euclidean metric}
\begin{equation}\label{eq:metric_euclM}
g_{\Mb}(Z_1, Z_2) :=  \langle Z_1, \Mb\, Z_2\rangle,  \qquad  Z_1,Z_2 \in \TX.
\end{equation}
Then \eqref{eq:normal} and \eqref{eq:projMX} imply that the associated normal space to $\Spkn$ at $X$ is characterized by 
\begin{equation}\label{eq:normalspaceM}
\NXM = \big\{\Mb^{-1}J_{2n}X\,\varOmega\ :\ \varOmega \in \skewset(2k)\big\},
\end{equation}
and the orthogonal projection onto the tangent space $\TX$ takes the form 
\begin{align}\label{eq:orth_proj_EuclidM}
\projM(Y) = Y - \Mb^{-1}J_{2n}X\varOmega_{X,Y},\qquad 	Y\in \Rbnk,
\end{align}
where $\varOmega_{X,Y}$ is the skew-symmetric solution to the Lyapunov equation 
\begin{equation}\label{eq:Lyapeq}
\mathrm{Lyap}_{X,\Mb}(\Omega)=2\,\skew(X^T\!J_{2n}^TY).
\end{equation}
Finally, it follows from \eqref{eq:Du_Rgradient} and \eqref{eq:orth_proj_EuclidM} that the Riemannian gradient of~$f$ with respect to the weighted Euclidean metric $g_\Mb$ is given by
\begin{equation}\label{eq:grad_EuclidM}
\rgradM{X} = \Mb^{-1}\nabla \bar{f}(X)-\Mb^{-1}J_{2n}X\varOmega_{X,\Mb^{-1}\nabla \bar{f}},
\end{equation}
where $\varOmega_{X,\Mb^{-1}\nabla \bar{f}}$ solves the Lyapunov equation 
\eqref{eq:Lyapeq} with $Y=\Mb^{-1}\nabla \bar{f}(X)$. 

Replacing $\Mb$ by the identity matrix in this subsection, we obtain all formulas established for the Euclidean metric in \cite{GSAS21a}. In this case, we will use the subscript~$e$ instead of $\Mb$ to mention objects in the Euclidean metric, e.g., $g_e(\cdot,\cdot), \proje(\cdot)$, and $\rgrade{X}$.

%
\section{Riemannian Hessians}
\label{sec:Rhessian}

On linear spaces, Newton's method for solving optimization problems requires the second-order derivative of the cost function, which is the directional derivative of its gradient. This concept can be extended to Riemannian manifolds by using a~Riemannian connection, e.g., \cite[Sect.~5.3]{AbsiMS08}. In this section, employing the operator-valued framework from \cite{Ngu23} for computing geometric quantities including Riemannian Hessian in the case of {tractable} metrics, we compute the Riemannian Hessian of a~smooth function on the symplectic manifold $\Spkn$.

Given a $C^2$-function $f$ defined on $\Spkn$, the \emph{Riemannian Hessian} of $f$ at \mbox{$X\in\Spkn$}, denoted by $\hess f(X)$, is defined as a~linear operator on the tangent space $\TX$ given by
\[
\Hess{X}{Z} = \nabla_{\!Z}\,\grad f(X), \qquad Z\in \TX,
\]
where $\nabla$ denotes the Riemannian connection on $\Spkn$. 
For computing the Riemannian Hessian on $\Spkn$ endowed with the tractable metric $g_{\Mb_X}\!$ with an~spd matrix~$\Mb_{X}$, we use the framework \cite[Thm.~3.1]{Ngu23} which (adapted to our notation) can be stated as follows. 

\begin{theorem}[Riemannian Hessian]
Given $X\in\Spkn$, the Riemannian Hessian of a~$C^2$-function 
\mbox{$f:\Spkn\longrightarrow\R$} at~$X$ with respect to the tractable metric~$g_{\Mb_X}\!$ defined in  \eqref{eq:metricM} is given by
\begin{align}
\Hess{X}{Z} 
  & = \proj\big(\D_Z\,\rgrad X+\Mb_X^{-1}\mathcal{K}(Z,\rgrad X)\big) \notag\\
  & = \proj\big(\Mb_X^{-1}\Nablaf{X}{Z} + \D_Z^{} \proj(\Mb_X^{-1}\nabla \bar{f}(X))\label{eq:Rhessian}\\ 
		&\quad - \Mb_X^{-1} \D_Z^{} \Mb_X^{}(\Mb_X^{-1}\nabla \bar{f}(X)) 
  + \Mb_X^{-1}\mathcal{K}\big(Z, \proj(\Mb_X^{-1}\nabla \bar{f}(X))\big)\big), \notag
\end{align}
where $\proj$ is the orthogonal projection onto $\TX$ as in \eqref{eq:projMX}, 
the mapping $\mathcal{K}$ is defined as
\begin{equation}\label{eq:K}
	\arraycolsep=2pt
	\begin{array}{rccl}
		\mathcal{K} :&\TX\times \TX&\longrightarrow &\Rbnk\\
		&(Z,U)&\longmapsto&\frac{1}{2}\bigl(\D_Z^{} \Mb_X(U) + \D_U^{} \Mb_X(Z)-\calX(Z,U)\bigr),
	\end{array}
\end{equation}
with the mapping $\calX: \TX\times \TX\longrightarrow \TX$ satisfying the condition
\begin{equation}\label{eq:Chi}
	\big\langle\calX(Z,U),V\big\rangle=\big\langle Z,\D_{V}^{}\Mb_X(U)\big\rangle\quad\text{for all } 
	Z, U, V\in\TX.
\end{equation}
\end{theorem}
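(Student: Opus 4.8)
The plan is to instantiate the general operator-valued Hessian formula \cite[Thm.~3.1]{Ngu23} in the present setting, where the ambient space is $\R^{2n\times 2k}$ with the standard Euclidean inner product, the embedding is given by the defining map $F(X)=X^TJ_{2n}X-J_{2k}$, and the Riemannian metric is the tractable metric $g_{\Mb_X}$ from \eqref{eq:metricM}. The cited theorem expresses the Riemannian Hessian of $f$ at $X$ in the direction $Z\in\TX$ in terms of four ingredients: the ambient Hessian $\nabla^2\bar f(X)[Z]$, the weighting matrix $\Mb_X$ together with its directional derivative $\D_Z\Mb_X$, the orthogonal projection $\proj$ onto the tangent space and its directional derivative, and a correction term built from the bilinear maps $\mathcal{K}$ and $\mathcal{X}$ that encode the variation of the metric. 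Since all of these objects have already been made explicit for $\Spkn$ in Section~\ref{sec:Geometry_sympl_Stiefel} --- in particular the projection formula \eqref{eq:projMX} via the Lyapunov operator \eqref{eq:Lyap_operator}, the Riemannian gradient \eqref{eq:Du_Rgradient}, and the adjoint $\D F_X^*$ in \eqref{eq:DF_star_detail} --- the proof is essentially a matter of verifying that the abstract hypotheses of \cite[Thm.~3.1]{Ngu23} are met and then transcribing its conclusion into the notation of the paper.

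Concretely, I would proceed as follows. First, I would recall that the Riemannian Hessian is by definition $\Hess{X}{Z}=\nabla_Z\grad f(X)$ with $\nabla$ the Levi--Civita connection of $g_{\Mb_X}$, and that by the Koszul formula the connection on an embedded submanifold with a non-constant weighting matrix decomposes as a projected ambient derivative plus a Christoffel-type correction; this is precisely the content captured abstractly by the map $\mathcal{K}$ and the associated $\mathcal{X}$ in \eqref{eq:K}--\eqref{eq:Chi}. Second, I would check the structural prerequisites needed to apply \cite[Thm.~3.1]{Ngu23}: that $\Spkn$ is a closed embedded submanifold of $\R^{2n\times 2k}$ (recorded at the start of Section~\ref{sec:Geometry_sympl_Stiefel}), that $\D F_X$ is surjective and $\D F_X^*$ injective (noted after \eqref{eq:DF_star_detail} and \eqref{eq:Du_orth_proj}), that $\Mb_X$ is spd and varies smoothly with $X$, and that consequently $\proj$ is well defined and smooth in $X$. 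Third, with these hypotheses in place, the cited theorem yields the first line of \eqref{eq:Rhessian}, $\Hess{X}{Z}=\proj(\D_Z\grad f(X)+\Mb_X^{-1}\mathcal{K}(Z,\grad f(X)))$. Fourth, I would expand $\grad f(X)=\proj(\Mb_X^{-1}\nabla\bar f(X))$ from \eqref{eq:Du_Rgradient} and apply the product rule to $\D_Z\grad f(X)=\D_Z[\proj(\Mb_X^{-1}\nabla\bar f(X))]$, using that $\D_Z(\Mb_X^{-1}\nabla\bar f(X))=\Mb_X^{-1}\nabla^2\bar f(X)[Z]-\Mb_X^{-1}(\D_Z\Mb_X)\Mb_X^{-1}\nabla\bar f(X)$ (differentiating $\Mb_X^{-1}\Mb_X=I$), which produces exactly the three terms $\Mb_X^{-1}\nabla^2\bar f(X)[Z]$, $\D_Z\proj(\Mb_X^{-1}\nabla\bar f(X))$, and $-\Mb_X^{-1}\D_Z\Mb_X(\Mb_X^{-1}\nabla\bar f(X))$ appearing in \eqref{eq:Rhessian}, with the $\mathcal{K}$-term unchanged. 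This gives the second (expanded) form of the formula.

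The main obstacle is not in the symplectic-specific manipulations --- those are routine chain-rule and product-rule computations --- but rather in correctly matching the conventions of the abstract framework \cite{Ngu23} to the ones used here, and in justifying why the various correction terms collapse to precisely the displayed expression: in particular, one must be careful that the map $\mathcal{X}$ is well defined (i.e.\ that $\langle Z,\D_V\Mb_X(U)\rangle$, as a functional of $V\in\TX$, is represented by an element of $\TX$ under the metric $g_{\Mb_X}$, which is where the spd property of $\Mb_X$ and the finite-dimensionality of $\TX$ enter), and that the projection applied in the outer layer of \eqref{eq:Rhessian} correctly absorbs the normal-space components generated by differentiating $\grad f$ along $Z$. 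A secondary subtlety is that \eqref{eq:Rhessian} as stated does not yet fully reduce $\mathcal{K}$ and $\mathcal{X}$ to matrix form --- that reduction is deferred to the subsequent results in Section~\ref{sec:Rhessian} for the canonical-like and weighted Euclidean metrics --- so at this level the statement is a direct specialization of \cite[Thm.~3.1]{Ngu23}, and the proof is essentially complete once the hypotheses are verified and the gradient is substituted.
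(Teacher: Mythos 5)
Your proposal is correct and matches the paper's own treatment: the paper states the theorem as a direct adaptation of \cite[Thm.~3.1]{Ngu23} with no separate proof, and you correctly identify the hypotheses to verify (closed embedded submanifold, surjectivity of $\D F_X$, injectivity of $\D F_X^*$, smoothness of the spd weight $\Mb_X$) and the routine chain-rule step linking the two displayed forms via $\D_Z\bigl(\Mb_X^{-1}\nabla\bar f(X)\bigr)=\Mb_X^{-1}\nabla^2\bar f(X)[Z]-\Mb_X^{-1}(\D_Z\Mb_X)\Mb_X^{-1}\nabla\bar f(X)$ together with idempotence of $\proj$.
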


At first glance, the Riemannian Hessian in~\eqref{eq:Rhessian} seems a bit intimidating in its specific form. Nevertheless, it is the first time that we figure out the second-order geometry of $\Spkn$ in terms of the tractable metric. Note that the development of~\eqref{eq:Rhessian} firmly builds upon the metric and the derivatives (e.g., $\D_Z^{} \proj$ and $\D_Z^{} \Mb_X$). Hence, the computation of the Riemannian Hessian~\eqref{eq:Rhessian} is available but not straightforward. In the following subsections, we derive its explicit formulation in matrix form with respect to the metrics that enjoy known geometric results, such as the canonical-like and weighted Euclidean metrics.

%
\subsection{Riemannian Hessian with respect to the canonical-like metric}
\label{Sec:hessian_canonical}

First, we consider the canonical-like metric $g_{\Mb_{X,c,\rho}}\!$ 
with the spd matrix $\Mb_{X,c,\rho}$ given in~\eqref{eq:B_X2}. 
We begin with establishing some directional derivatives required for constructing the Rieman\-nian Hessian. 

\begin{lemma}\label{lem:Diff_orth_proj_canonical}
Let $X\in\Spkn$, $Z\in\TX$, $Y\in\Rbnk$, and let the orthogonal projection $\projc$ be as in \eqref{eq:orth_proj_canon}. Then the action of the directional derivative of~$\projc$ at $X$ in the direction $Z$ on $Y$ is given by
\begin{equation}\label{eq:Diff_orth_proj_canonical}
	\D_Z^{}\projc(Y) = -XJ_{2k}\,\skew(Z^TJ_{2n}^TY) - Z J_{2k}\,\skew(X^TJ_{2n}^TY).
\end{equation}
\end{lemma}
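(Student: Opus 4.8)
The plan is to differentiate the explicit formula $\projc(Y) = Y - XJ_{2k}\,\skew(X^TJ_{2n}^TY)$ directly, treating it as a map $X \mapsto \projc(\cdot)$ with $Y$ held fixed and then evaluating the result on $Y$. Since $Y$ is a constant matrix and $J_{2k}$, $J_{2n}$ are constant, the only $X$-dependence sits in the factor $XJ_{2k}$ and inside the argument $X^TJ_{2n}^TY$ of the skew-symmetric-part operator. The operator $\skew(\cdot) = \tfrac12(\cdot - (\cdot)^T)$ is linear, hence its own derivative, so $\D_Z\big[\skew(X^TJ_{2n}^TY)\big] = \skew(Z^TJ_{2n}^TY)$. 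Applying the Leibniz product rule to $XJ_{2k}\cdot\skew(X^TJ_{2n}^TY)$ then gives exactly the two terms
\[
\D_Z^{}\projc(Y) = -\,ZJ_{2k}\,\skew(X^TJ_{2n}^TY)\;-\;XJ_{2k}\,\skew(Z^TJ_{2n}^TY),
\]
which, after reordering, is precisely \eqref{eq:Diff_orth_proj_canonical}.

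Concretely, the steps are: (i) write $\projc(X+tZ)(Y)$ for the curve $X+tZ$ in the ambient space $\R^{2n\times 2k}$ (note $\projc$ as given in \eqref{eq:orth_proj_canon} is defined for all $X$ in a neighborhood, since it does not require $X$ to be symplectic — only the derivation used $X\in\Spkn$, but the formula itself extends smoothly); (ii) compute $\frac{\d}{\d t}\big|_{t=0}$ term by term, using linearity of $\skew$; (iii) collect the two resulting terms. One should also remark that the expression \eqref{eq:Diff_orth_proj_canonical} is independent of the parameter $\rho$ and of the choice of $X_\perp$, consistent with the fact that $\projc$ itself has this property; this needs no extra work since neither $\rho$ nor $X_\perp$ appears in the formula being differentiated.

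The main thing to be careful about — rather than a genuine obstacle — is the legitimacy of differentiating the ambient formula \eqref{eq:orth_proj_canon} along an arbitrary tangent direction $Z\in\TX$ while keeping $Y$ fixed: one must confirm that $\D_Z\projc(Y)$ in the statement means the directional derivative of the matrix-valued map $X\mapsto \projc(\cdot)(Y)$ (a smooth map on an open subset of $\R^{2n\times2k}$) evaluated at $X$ in direction $Z$, which is how the operator-valued framework of \cite{Ngu23} uses such derivatives in \eqref{eq:Rhessian}. With that reading the computation is a one-line product rule. A secondary point worth a sentence is that the result automatically lands in the right space for subsequent use: one can check, if desired, that $\D_Z\projc(Y)$ restricted to $Y\in\TX$ behaves compatibly with the Weingarten-type corrections appearing in \eqref{eq:Rhessian}, but this is not required for the lemma as stated. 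I would therefore present the proof as: substitute the curve, apply linearity of $\skew$ and the product rule, and read off the two terms.
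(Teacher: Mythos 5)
Your proposal is correct and is essentially the paper's own argument: the paper also differentiates the explicit formula \eqref{eq:orth_proj_canon}, only organizing the product rule as a telescoping difference quotient along a curve $\varUpsilon(t)$ on $\Spkn$ rather than along the ambient straight line $X+tZ$. The two presentations are equivalent since the smooth extension of $\projc$ to a neighborhood in $\R^{2n\times 2k}$ makes the directional derivative depend only on the first-order data $(X,Z)$.
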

\begin{proof} 
Let $ \varUpsilon(t)\subset\Spkn$ be a~smooth curve defined on a~neighborhood of~$t=0$ such that $ \varUpsilon(0) = X$ and $\dot{ \varUpsilon}(0) = Z$.  It follows from \eqref{eq:orth_proj_canon} that
\begin{align*}
	\calP_{ \varUpsilon(t),c}(Y) - \projc (Y) &= -\bigl( \varUpsilon(t)J_{2k}\,\skew( \varUpsilon(t)^TJ_{2n}^TY) -  \varUpsilon(t)J_{2k}\,\skew(X^TJ_{2n}^TY)\bigr)\\
	&\quad - \bigl( \varUpsilon(t)J_{2k}\,\skew(X^TJ_{2n}^TY) - XJ_{2k}\,\skew(X^TJ_{2n}^TY)\bigr).
\end{align*}
Then, dividing both sides of this equality by $t$ and letting $t$ tend to zero, we obtain~\eqref{eq:Diff_orth_proj_canonical}. \qed
\end{proof}

Next, we calculate the directional derivative of the matrix $\Mb_{X,c,\rho}$ in~\eqref{eq:B_X2} that determines the canonical-like metric. For the sake of brevity, we impose the condition~\eqref{eq:choice_Xperp} on $X_\perp$.
In this case, as shown in \cite[Prop.~4.1]{GSAS21}, we obtain that 
\begin{align*}
-X_\perp\big(X_\perp^T J_{2n}X_\perp\big)^{-2}X_\perp^T 
= I_{2n}-X(X^TX)^{-1}X^T
\end{align*}
is the orthogonal projection onto the orthogonal complement of the subspace spanned by the columns of $X$, which is denoted by $\PXp:= I_{2n} - X(X^TX)^{-1}X^T$. Then $\Mb_{X,c,\rho}$ in~\eqref{eq:B_X2} can be written as 
\begin{equation}\label{eq:B_X3}
\Mb_{X,c,\rho} = \frac{1}{\rho}J_{2n}^{}XX^TJ_{2n}^T + \PXp.
\end{equation}
This shows that $\Mb_{X,c,\rho}$ is indeed independent of $X_\perp$.
\begin{lemma}\label{lem:Diff_B_X}
Let $X\in\Spkn$ and $Z\in\TX$. The directional derivative of $\Mb_{X,c,\rho}$ in \eqref{eq:B_X3}, considered as an~operator on $\Rbnk$, at~$X$ in the direction~$Z$ is given by
\begin{align}\label{eq:Diff_B_X}
		\D_Z^{} \Mb_{X,c,\rho} &=2\,\sym\Bigl(\frac{1}{\rho}J_{2n}XZ^TJ_{2n}^T 
                        - \PXp Z(X^TX)^{-1}X^T\Bigr).
\end{align}
\end{lemma}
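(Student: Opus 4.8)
The plan is to compute the directional derivative of $\Mb_{X,c,\rho}$ as written in \eqref{eq:B_X3}, namely $\Mb_{X,c,\rho} = \frac{1}{\rho}J_{2n}^{}XX^TJ_{2n}^T + \PXp$, by differentiating it term-by-term along a smooth curve $\varUpsilon(t)\subset\Spkn$ with $\varUpsilon(0)=X$ and $\dot\varUpsilon(0)=Z$. The first term is the easier one: since $J_{2n}$ is constant, the product rule gives $\D_Z\big(\frac{1}{\rho}J_{2n}XX^TJ_{2n}^T\big) = \frac{1}{\rho}J_{2n}(ZX^T + XZ^T)J_{2n}^T = \frac{2}{\rho}\,\sym(J_{2n}XZ^TJ_{2n}^T)$, which already matches the first summand inside the $\sym(\cdot)$ in~\eqref{eq:Diff_B_X}.

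The second term requires differentiating the orthogonal projector $\PXp = I_{2n} - X(X^TX)^{-1}X^T$. I would use the standard formula for the derivative of $(X^TX)^{-1}$, i.e. $\D_Z (X^TX)^{-1} = -(X^TX)^{-1}(Z^TX + X^TZ)(X^TX)^{-1}$, and then assemble the three resulting pieces from the product rule applied to $X(X^TX)^{-1}X^T$. The key simplification is that $\PXp X = 0$ (equivalently $X^T\PXp = 0$), so that when the derivative acts on a generic $Y\in\Rbnk$, the terms involving $Z(X^TX)^{-1}X^T Y$ that get left-multiplied by something proportional to $X$ drop out after projection by $\PXp$; more precisely, a direct expansion shows $\D_Z \PXp = -\PXp Z(X^TX)^{-1}X^T - X(X^TX)^{-1}Z^T\PXp = -2\,\sym\!\big(\PXp Z(X^TX)^{-1}X^T\big)$. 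Adding the two contributions and pulling the common $2\,\sym(\cdot)$ out front yields exactly~\eqref{eq:Diff_B_X}.

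The main point requiring a little care, rather than a genuine obstacle, is the bookkeeping in differentiating $X(X^TX)^{-1}X^T$ and verifying that the cross terms collapse into the clean symmetrized form using $\PXp X = 0$ and the idempotency/symmetry of $\PXp$; one must also note that the derivative is being regarded as a linear operator on $\Rbnk$ acting on an arbitrary $Y$, so the identity \eqref{eq:Diff_B_X} is an equality of such operators. I would present the computation compactly: differentiate each term of~\eqref{eq:B_X3}, invoke the derivative-of-inverse identity, simplify the projector derivative using $X^T\PXp = 0$, and conclude. No deep idea is needed — the lemma is a routine (if slightly tedious) calculus exercise, and the only thing worth flagging is that the condition~\eqref{eq:choice_Xperp} on $X_\perp$ is what licenses the use of the representation~\eqref{eq:B_X3} in the first place.
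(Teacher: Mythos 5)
Your proposal is correct and takes essentially the same route as the paper: both differentiate the representation \eqref{eq:B_X3} directly, using the derivative-of-inverse formula for $(X^TX)^{-1}$ and then combining the cross terms via the projector algebra (in particular $\PXp X = 0$) into the symmetrized form \eqref{eq:Diff_B_X}. The paper merely packages the same Leibniz-rule computation as a telescoping decomposition of the finite difference $\Mb_{\varUpsilon(t),c,\rho} - \Mb_{X,c,\rho}$ before passing to the limit, whereas you apply the product rule directly and record the intermediate identity $\D_Z \PXp = -2\,\sym\bigl(\PXp Z(X^TX)^{-1}X^T\bigr)$.
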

\begin{proof}
Just as in the proof above, we assume that $ \varUpsilon(t)\subset\Spkn$ is a~smooth curve defined on a~neighborhood of~$t=0$ such that $ \varUpsilon(0) = X$ and $\dot{ \varUpsilon}(0) = Z$. First, by differentiating the relation $\big( \varUpsilon(t)^T \varUpsilon(t)\big)^{-1}\big( \varUpsilon(t)^T \varUpsilon(t)\big) = I_{2k}$ at $t=0$, we obtain that
\begin{equation}\label{proof:Diff_B_X}
	\left.\frac{\d}{\d t }\big( \varUpsilon(t)^T \varUpsilon(t)\big)^{-1}\right|_{t=0} = -(X^TX)^{-1}(X^TZ + Z^TX)(X^TX)^{-1}.
\end{equation}
Using \eqref{eq:B_X3}, we find that
\begin{align*}
	\Mb_{ \varUpsilon(t),c,\rho}-\Mb_{X,c,\rho} 
    &= \frac{1}{\rho}J_{2n}\Bigl( \varUpsilon(t) \varUpsilon(t)^T -  \varUpsilon(t)X^T +  \varUpsilon(t)X^T - XX^T\Bigr)J_{2n}^T\\ 
	&\quad - \varUpsilon(t)\big( \varUpsilon(t)^T \varUpsilon(t)\big)^{-1}( \varUpsilon(t)-X)^T \\
    &\quad - ( \varUpsilon(t)- X)\big( \varUpsilon(t)^T \varUpsilon(t)\big)^{-1}X^T \\ 
	&\quad - X\Bigl(\big( \varUpsilon(t)^T \varUpsilon(t)\big)^{-1}- (X^TX)^{-1}\Bigl)X^T .
\end{align*}
Then, dividing both sides by $t$ and letting $t$ tend to zero with \eqref{proof:Diff_B_X} in mind, \eqref{eq:Diff_B_X} is obtained. \qed
\end{proof}

Having computed $\D_Z^{} \Mb_{X,c,\rho}$, we determine now the mapping $\calX$ defined in~\eqref{eq:Chi}.
\begin{lemma}\label{lem:index_raising}
Let $X\in\Spkn$ and $Z, U\in\TX$. The mapping $\cal X$ satisfying~\eqref{eq:Chi} is given by
\begin{align}\label{eq:index_raising}
	\calX(Z, U) &= \frac{2}{\rho}\,J_{2n}^T\,\sym(U Z^T)J_{2n}X 
 - 2\,\PXp\, \sym(U Z^T)X(X^TX)^{-1}.
\end{align}
\end{lemma}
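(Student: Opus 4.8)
The plan is to verify the defining relation~\eqref{eq:Chi} directly, reading off $\calX(Z,U)$ as the Euclidean Riesz representative of the linear functional $V\mapsto\langle Z,\D_V^{}\Mb_{X,c,\rho}(U)\rangle$ on $\TX$. First I would substitute the expression for $\D_V^{}\Mb_{X,c,\rho}$ established in Lemma~\ref{lem:Diff_B_X} (with the direction there renamed to the test vector~$V$) and expand the symmetrizer, using $(\PXp)^T=\PXp$, to write
\[
\D_V^{}\Mb_{X,c,\rho}=\tfrac1\rho\big(J_{2n}XV^TJ_{2n}^T+J_{2n}VX^TJ_{2n}^T\big)-\PXp V(X^TX)^{-1}X^T-X(X^TX)^{-1}V^T\PXp .
\]
Applying this operator to $U$ and pairing with $Z$ gives $\langle Z,\D_V^{}\Mb_{X,c,\rho}(U)\rangle=\tr\big(Z^T(\D_V^{}\Mb_{X,c,\rho})U\big)$ as a sum of four traces, each containing exactly one factor $V$ or $V^T$.

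The core of the argument is purely a trace manipulation: in each of the four traces I would use cyclic invariance to move $V$ (resp.\ $V^T$) to the leading position, so that the term becomes $\tr(V^TM_i)=\langle M_i,V\rangle$ (resp.\ $\tr(M_iV)=\langle M_i^T,V\rangle$), which exposes the matrix coefficient tested against $V$. The two traces originating from the $\tfrac1\rho$-bracket, after using $J_{2n}J_{2n}^T=I_{2n}$, add up to $\tfrac1\rho J_{2n}^T(UZ^T+ZU^T)J_{2n}X=\tfrac2\rho J_{2n}^T\sym(UZ^T)J_{2n}X$; the two traces coming from the $\PXp$-terms add up to $-\PXp(UZ^T+ZU^T)X(X^TX)^{-1}=-2\PXp\sym(UZ^T)X(X^TX)^{-1}$. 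Summing the contributions yields
\[
\langle Z,\D_V^{}\Mb_{X,c,\rho}(U)\rangle=\big\langle \tfrac2\rho J_{2n}^T\sym(UZ^T)J_{2n}X-2\PXp\sym(UZ^T)X(X^TX)^{-1},\,V\big\rangle ,
\]
which is exactly~\eqref{eq:Chi} with $\calX(Z,U)$ as in~\eqref{eq:index_raising}.

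I expect the only real difficulty to be clerical, namely keeping the transposes and the signs induced by $J_{2n}^T=-J_{2n}$ consistent through the cyclic shifts. One point worth a short remark: the identity just displayed in fact holds for every $V\in\R^{2n\times 2k}$, not merely for $V\in\TX$, so~\eqref{eq:Chi} is certainly met; on the other hand the matrix on the left of the pairing need not itself lie in $\TX$. Since~\eqref{eq:Chi} pins $\calX(Z,U)$ down only modulo the Euclidean normal space $\mathrm{range}(\D F_X^*)=\{J_{2n}X\varOmega:\varOmega\in\skewset(2k)\}$, one may, if desired, replace the expression by its Euclidean orthogonal projection onto $\TX$; this does not alter the Riemannian Hessian~\eqref{eq:Rhessian}, because such a correction changes $\mathcal{K}(Z,\cdot)$ by an element of $\mathrm{range}(\D F_X^*)$, hence changes $\Mb_{X,c,\rho}^{-1}\mathcal{K}(Z,\cdot)$ by an element of the normal space $\NX$ in~\eqref{eq:normal}, which is annihilated by the outer projection $\projc$.
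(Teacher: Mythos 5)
Your proof follows the same route as the paper's: substitute the formula for $\D_V^{}\Mb_{X,c,\rho}$ from Lemma~\ref{lem:Diff_B_X}, expand $\tr\big(Z^T(\D_V^{}\Mb_{X,c,\rho})U\big)$ into four traces, and use cyclic invariance of the trace to bring each $V$ (or $V^T$) to the front, which exposes exactly the representative in~\eqref{eq:index_raising}. Your closing remark—that the identity in fact holds for all $V\in\Rbnk$, so that~\eqref{eq:Chi} pins $\calX(Z,U)$ down only modulo $\mathrm{range}(\D F_X^*)$ and the matrix in~\eqref{eq:index_raising} need not itself lie in $\TX$, but that this ambiguity is harmless because it propagates into $\NX$ and is annihilated by the outer projection $\projc$ in~\eqref{eq:Rhessian}—is a correct and worthwhile subtlety that the paper passes over silently.
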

\begin{proof}
For any $Z,U,V\in\TX$, using \eqref{eq:Diff_B_X} and properties of the trace function, we obtain that
\begin{align*}
	\big\langle Z,\D_{V}^{}\Mb_{X,c,\rho}( U)\big\rangle\! &= \tr\Bigl(\frac{1}{\rho}Z^TJ_{2n}(XV^T + VX^T)J_{2n}^T U \\ &\quad - Z^TX(X^TX)^{-1}V^T\PXp U- Z^T\PXp V(X^TX)^{-1}X^T U\Bigr)\\ 
	&=\tr\Bigl(V^T\!\big(\frac{2}{\rho}J_{2n}^T\sym(U Z^T)J_{2n}^{}X\! -2\PXp\sym(U Z^T)X(X^T\!X)^{-1} 
	\big)\Bigr).
\end{align*}
This immediately implies  \eqref{eq:index_raising}. \qed
\end{proof}

Next, we derive an expression for $\mathcal{K}$ in \eqref{eq:K}.
\begin{lemma}\label{lem:compute_K}
Let $X\in\Spkn$ and $Z, U\in\TX$. Then the mapping $\mathcal{K}$ defined in \eqref{eq:K} is given by
\begin{align}
	\mathcal{K}(Z, U) &= \frac{1}{\rho}J_{2n}\bigl(X\skew(Z^TJ_{2n}^T U) + 2\,\sym(Z U^T)J_{2n}X\bigr) \nonumber\\
	& \quad - \PXp U\,\skew\big((X^TX)^{-1}X^TZ\big) - \PXp Z\,\skew\big((X^TX)^{-1}X^T U\big) \nonumber\\
	& \quad - X(X^TX)^{-1}\sym\big(Z^T\PXp U\big). \label{eq:K_detail}
\end{align}
\end{lemma}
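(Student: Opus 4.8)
The plan is to substitute the formula for $\D_Z^{}\Mb_{X,c,\rho}$ from Lemma~\ref{lem:Diff_B_X} and the formula for $\calX(Z,U)$ from Lemma~\ref{lem:index_raising} directly into the definition of $\mathcal{K}$ in~\eqref{eq:K}, namely $\mathcal{K}(Z,U)=\tfrac12\big(\D_Z^{}\Mb_{X,c,\rho}(U)+\D_U^{}\Mb_{X,c,\rho}(Z)-\calX(Z,U)\big)$, and then simplify. First I would write out $\D_Z^{}\Mb_{X,c,\rho}(U)$ by applying the operator~\eqref{eq:Diff_B_X} to~$U$: since $2\,\sym(A)=A+A^T$, this gives $\tfrac{1}{\rho}\big(J_{2n}XZ^TJ_{2n}^TU+J_{2n}ZX^TJ_{2n}^TU\big)-\PXp Z(X^TX)^{-1}X^TU-X(X^TX)^{-1}Z^T\PXp U$, where I use that $(\PXp)^T=\PXp$ and $(J_{2n})^T=-J_{2n}$. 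I would then write the analogous expression for $\D_U^{}\Mb_{X,c,\rho}(Z)$ by swapping the roles of $Z$ and $U$.

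Next I would add the two directional-derivative terms and group them. In the $\tfrac{1}{\rho}J_{2n}(\cdots)$ part, the four terms combine as $\tfrac{1}{\rho}J_{2n}\big(XZ^TJ_{2n}^TU + ZX^TJ_{2n}^TU + XU^TJ_{2n}^TZ + UX^TJ_{2n}^TZ\big)$; the first and third together form $X\big(Z^TJ_{2n}^TU+U^TJ_{2n}^TZ\big)=-2X\,\skew(Z^TJ_{2n}^TU)$ after using $J_{2n}^T=-J_{2n}$ and $(Z^TJ_{2n}^TU)^T=U^TJ_{2n}Z=-U^TJ_{2n}^TZ$, while the second and fourth give $\big(ZX^T+UX^T\big)J_{2n}^TU$-type terms that combine as $2\,\sym(ZU^T)J_{2n}^TX$ after transposing appropriately. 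The sign bookkeeping here (two factors of $J_{2n}^T=-J_{2n}$, plus the skew/sym identities) is the one place to be careful. Similarly, the $\PXp$ terms from the two derivatives are $-\PXp Z(X^TX)^{-1}X^TU-\PXp U(X^TX)^{-1}X^TZ$, and the remaining terms are $-X(X^TX)^{-1}\big(Z^T\PXp U+U^T\PXp Z\big)=-2X(X^TX)^{-1}\sym(Z^T\PXp U)$.

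Then I would subtract $\calX(Z,U)$ from~\eqref{eq:index_raising}. The $\tfrac{2}{\rho}J_{2n}^T\sym(UZ^T)J_{2n}X=-\tfrac{2}{\rho}J_{2n}\sym(ZU^T)J_{2n}X$ term (since $\sym(UZ^T)=\sym(ZU^T)$ and one $J_{2n}^T=-J_{2n}$) cancels half of the $2\,\sym(ZU^T)J_{2n}X$ contribution from the derivative terms, leaving exactly $\tfrac{1}{\rho}\big(-2X\,\skew(Z^TJ_{2n}^TU)+2\,\sym(ZU^T)J_{2n}X\big)$ before dividing by~$2$ — which after the overall factor $\tfrac12$ matches the first line of~\eqref{eq:K_detail}. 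Subtracting $-\big(-2\PXp\sym(UZ^T)X(X^TX)^{-1}\big)=+2\PXp\sym(ZU^T)X(X^TX)^{-1}=\PXp\big(ZU^T+UZ^T\big)X(X^TX)^{-1}$ recombines with the $\PXp$ derivative terms: $-\PXp Z(X^TX)^{-1}X^TU-\PXp U(X^TX)^{-1}X^TZ+\PXp Z U^TX(X^TX)^{-1}+\PXp U Z^TX(X^TX)^{-1}$, and regrouping gives $-\PXp U\,\skew((X^TX)^{-1}X^TZ)-\PXp Z\,\skew((X^TX)^{-1}X^TU)$ after using $(X^TX)^{-1}$ symmetry and the definition of $\skew$; dividing by~$2$ matches the second line of~\eqref{eq:K_detail}. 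The $-X(X^TX)^{-1}\sym(Z^T\PXp U)$ term survives untouched from the derivative part (with the $\tfrac12$ cancelling the factor $2$), giving the third line. The only genuine obstacle is the sign/transpose accounting in the $J_{2n}$-sandwiched terms, where I must consistently track $J_{2n}^T=-J_{2n}$, $(\PXp)^T=\PXp$, and the identities $A+A^T=2\,\sym(A)$, $A-A^T=2\,\skew(A)$; once those are handled, the result follows by collecting like terms.
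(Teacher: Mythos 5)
Your route --- substituting \eqref{eq:Diff_B_X} and \eqref{eq:index_raising} into \eqref{eq:K} and collecting terms --- matches the paper exactly, and your handling of the $\PXp$-terms and the $X(X^TX)^{-1}\sym(Z^T\PXp U)$ term is correct. However, the $J_{2n}$-sandwiched block contains real errors. The sign of the $\skew$-term is wrong: from $(Z^TJ_{2n}^TU)^T = U^TJ_{2n}Z = -U^TJ_{2n}^TZ$ one gets $Z^TJ_{2n}^TU + U^TJ_{2n}^TZ = Z^TJ_{2n}^TU - (Z^TJ_{2n}^TU)^T = +2\,\skew(Z^TJ_{2n}^TU)$, not $-2\,\skew(\cdot)$. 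More importantly, the cross-terms $J_{2n}ZX^TJ_{2n}^TU + J_{2n}UX^TJ_{2n}^TZ$ cannot be combined merely ``by transposing appropriately'': these are rectangular $2n\times 2k$ products, not symmetric matrices. The paper explicitly invokes the tangent condition \eqref{eq:tangentspace_1}, $X^TJ_{2n}Z + Z^TJ_{2n}X = 0$, which for $Z,U\in\TX$ gives $X^TJ_{2n}^TU = U^TJ_{2n}X$ and $X^TJ_{2n}^TZ = Z^TJ_{2n}X$. Only then does $ZX^TJ_{2n}^TU + UX^TJ_{2n}^TZ = (ZU^T + UZ^T)J_{2n}X = 2\,\sym(ZU^T)J_{2n}X$ hold, with $J_{2n}X$ rather than your $J_{2n}^TX$. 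Your proposal never uses that $Z$ and $U$ are tangent vectors, which is the one substantive ingredient beyond pure algebra that the computation requires.

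The downstream bookkeeping is also off: since $-\calX(Z,U)$ contributes $+\tfrac{2}{\rho}J_{2n}\,\sym(ZU^T)J_{2n}X$, it \emph{adds to} --- rather than ``cancels half of'' --- the derivative contribution, so $2\,\mathcal{K}(Z,U)$ carries $\tfrac{2}{\rho}J_{2n}X\,\skew(Z^TJ_{2n}^TU)$ and $\tfrac{4}{\rho}J_{2n}\,\sym(ZU^T)J_{2n}X$ in its $\rho$-dependent block; dividing by $2$ gives exactly the first line of \eqref{eq:K_detail}. Your intermediate expression $\tfrac{1}{\rho}\big(-2X\,\skew(Z^TJ_{2n}^TU)+2\,\sym(ZU^T)J_{2n}X\big)$ therefore has the wrong sign on the $\skew$-term, the wrong coefficient on the $\sym$-term, and lacks the outer $J_{2n}$, so it would not reduce to the first line of \eqref{eq:K_detail} as claimed.
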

\begin{proof}
The expression \eqref{eq:K_detail} is obtained simply by concatenating the terms in the definition~\eqref{eq:K} using \eqref{eq:Diff_B_X}, \eqref{eq:index_raising}, and the tangent condition \eqref{eq:tangentspace_1}. Indeed,
\begin{align} \notag
	2\,\mathcal{K}(Z, U) &= \D_Z^{} \Mb_{X,c,\rho}(U) + \D_U^{} \Mb_{X,c,\rho}(Z) - \calX(Z, U)\\ \notag
	&= \frac{1}{\rho}\Bigl(J_{2n}^{}(XZ^T+Z X^T)J_{2n}^T U + J_{2n}^{}(X U^T+U X^T)J_{2n}^TZ \\ \notag
	&\quad + J_{2n}\bigl( U Z^T\! + Z U^T\bigr)J_{2n}X \Bigr)
	+\PXp U Z^TX(X^TX)^{-1} \\ \notag
	&\quad +\PXp Z U^TX(X^TX)^{-1} - \PXp  Z(X^TX)^{-1}X^T U \\ \notag
	&\quad - \PXp U(X^TX)^{-1}X^T Z - X(X^TX)^{-1}\bigl( Z^T\PXp U +  U^T\PXp Z\bigr)\\\notag
	&= \frac{1}{\rho}\Bigl(J_{2n}^{}X\bigl( Z^TJ_{2n}^T U +  U^TJ_{2n}^T Z\bigr) + 2J_{2n}^{}\bigl( U Z^T +  Z U^T\bigr)J_{2n}^{}X\Bigr)\\ \notag
	&\quad - 2\,\PXp U\skew\big((X^TX)^{-1}X^T Z\big) - 2\,\PXp Z\skew\big((X^TX)^{-1}X^T U\big) \\
	&\quad - 2\,X(X^TX)^{-1}\sym\big( Z^T\PXp U\big)  .\notag
\end{align} 
This completes the proof.\qed
\end{proof}

Finally, we achieve an expression for the Riemannian Hessian $\hess_cf(X)$. 
\begin{theorem}\label{th:Rhess_canon-like}
Given a $C^2$-function $f:\Spkn\longrightarrow\R$ with its ambient gradient and Hessian $\nabla\bar{f}(X)$ and $\nabla^2 \bar{f}(X)$, respectively.
The Riemannian Hessian of $f$ at \mbox{$X\in\Spkn$} with respect to the canonical-like metric
applied to $Z\in\TX$ is determined as 
\begin{align*}
	\Hessc{X}{Z} 
       & = \projc\Bigl(\Mb_{X,c,\rho}^{-1}\Nablaf{X}{Z} 
        - \rho\,  Z J_{2k}\skew\bigl(J_{2k}^TX^T\nabla \bar{f}(X)\bigr) \\
	& \quad +2\,\sym\bigl(\rho\,XZ^T
	-2\,\skew\bigl(XJ_{2k}^{}Z^T\bigr)J_{2n}^TP_X^T\bigr)\nabla\bar{f}(X)\\
	& \quad + \rho\, X\sym\bigl(X^TJ_{2n} Z\nabla\bar{f}(X)^TXJ_{2k} 
	-Z^T\nabla\bar{f}(X)\bigr)\Bigr)\\
	\notag
	& \quad +P_X\Bigl( P_X^TJ_{2n}^{}\bigl( Z\,\sym\bigl(\nabla\bar{f}(X)^TXJ_{2k}\bigr) + \frac{1}{\rho}P_X^T\nabla\bar{f}(X) Z^TJ_{2n}^{}X\bigr)\\ \notag
	& \quad - J_{2n}^{}XJ_{2k}^{}\,\sym\bigl( Z^TP_X^T\nabla\bar{f}(X)\bigr)\\ \notag
	& \quad - Z\,\skew\bigl(J_{2k}^{}X^TJ_{2n}^{}P_X^T\nabla\bar{f}(X) + \rho\, J_{2k}\sym\bigl(J_{2k}^TX^T\nabla\bar{f}(X)\bigr)\bigr)\\ \notag
	& \quad -P_X^T\nabla\bar{f}(X)\skew\bigl((X^TX)^{-1}X^T Z\bigr)\Bigr),
\end{align*}
where  
\begin{equation}\label{eq:prPXperp}
P_X=I_{2n}-XJ_{2k}^{}X^TJ_{2n}^T
\end{equation}
is an~oblique projection onto the tangent space $\TX$.
\end{theorem}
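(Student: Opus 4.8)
The plan is to specialize the general Riemannian Hessian formula~\eqref{eq:Rhessian} to the canonical-like metric $g_{\Mb_{X,c,\rho}}$, for which every ingredient has already been computed in the preceding lemmas. Starting from the second, fully expanded form of~\eqref{eq:Rhessian}, namely
\[
\Hess{X}{Z} = \proj\big(\Mb_X^{-1}\Nablaf{X}{Z} + \D_Z^{} \proj(\Mb_X^{-1}\nabla \bar{f}(X)) - \Mb_X^{-1} \D_Z^{} \Mb_X^{}(\Mb_X^{-1}\nabla \bar{f}(X)) + \Mb_X^{-1}\mathcal{K}\big(Z, \proj(\Mb_X^{-1}\nabla \bar{f}(X))\big)\big),
\]
I would substitute the explicit objects for the canonical-like case: $\Mb_{X,c,\rho}^{-1}=\rho\,XX^T+J_{2n}X_\perp X_\perp^TJ_{2n}^T$ from~\eqref{eq:B_X_E_inverse} together with $\Mb_{X,c,\rho}^{-1}J_{2n}X=\rho\,XJ_{2k}$; the projection $\projc$ from~\eqref{eq:orth_proj_canon}; its directional derivative $\D_Z^{}\projc$ from Lemma~\ref{lem:Diff_orth_proj_canonical}; the derivative $\D_Z^{}\Mb_{X,c,\rho}$ from Lemma~\ref{lem:Diff_B_X}; and the map $\mathcal{K}$ from Lemma~\ref{lem:compute_K}. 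A key simplifying observation is that $\grad_c f(X)=\projc(\Mb_{X,c,\rho}^{-1}\nabla\bar f(X))$ already has the closed form~\eqref{eq:Rgrad_canon}, i.e. $\rho\,XJ_{2k}\sym(J_{2k}^TX^T\nabla\bar f(X))+J_{2n}X_\perp X_\perp^TJ_{2n}^T\nabla\bar f(X)=\rho\,XJ_{2k}\sym(J_{2k}^TX^T\nabla\bar f(X))+P_X^T\nabla\bar f(X)$ using the identity $J_{2n}X_\perp X_\perp^TJ_{2n}^T=P_X^T$ established just before Lemma~\ref{lem:Diff_B_X} (and $\PXp=P_X^TP_X$-type relations); this will be the argument fed into $\mathcal{K}(Z,\cdot)$ and, in two of the four terms, the thing whose derivative we need.

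The computation then proceeds term by term. For the first term, $\Mb_{X,c,\rho}^{-1}\Nablaf{X}{Z}$ expands to $\rho\,XX^T\nabla^2\bar f(X)[Z]+P_X^T\nabla^2\bar f(X)[Z]$, but since $\projc$ is applied on the outside I would keep it compactly as $\projc(\Mb_{X,c,\rho}^{-1}\Nablaf{X}{Z})$ in the first line of the claimed formula. For the third term, I compute $\Mb_{X,c,\rho}^{-1}\D_Z^{}\Mb_{X,c,\rho}(\Mb_{X,c,\rho}^{-1}\nabla\bar f(X))$ by first applying $\D_Z^{}\Mb_{X,c,\rho}$ from~\eqref{eq:Diff_B_X} to the vector $\Mb_{X,c,\rho}^{-1}\nabla\bar f(X)$ and then left-multiplying by $\Mb_{X,c,\rho}^{-1}$; using $\Mb_{X,c,\rho}^{-1}J_{2n}=\rho\,XX^TJ_{2n}+J_{2n}X_\perp X_\perp^T$ and $\Mb_{X,c,\rho}^{-1}\PXp=\PXp$ (since $XX^T\PXp=0$ and $J_{2n}X_\perp X_\perp^TJ_{2n}^T\PXp=\PXp$), many factors collapse. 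The $\D_Z^{}\projc$ term is handled by~\eqref{eq:Diff_orth_proj_canonical} with $Y=\Mb_{X,c,\rho}^{-1}\nabla\bar f(X)$, and here I would use the tangent condition $X^TJ_{2n}^TZ=\skew$-free manipulations plus $Z^TJ_{2n}^T\Mb_{X,c,\rho}^{-1}\nabla\bar f(X)=\rho\,Z^TJ_{2n}^TXX^T\nabla\bar f(X)+Z^TX_\perp X_\perp^T\nabla\bar f(X)$ to simplify the two skew-symmetric arguments, noting $X^TJ_{2n}^TZ=(X^TJ_{2n}^TZ)$ is \emph{not} symmetric in general but $Z^TJ_{2n}X+X^TJ_{2n}^TZ$ has a known structure from~\eqref{eq:tangentspace_1} (indeed $\skew(X^TJ_{2n}^TZ)=X^TJ_{2n}^TZ$ when $Z\in\TX$, because $\sym(X^TJ_{2n}^TZ)=-\tfrac12\D F_X(Z)=0$; wait, $\D F_X(Z)=X^TJ_{2n}Z+Z^TJ_{2n}X$, so $\sym(X^TJ_{2n}^TZ)=-\tfrac12\D F_X(Z)=0$, hence $X^TJ_{2n}^TZ\in\skewset(2k)$), which kills or simplifies several skew operators. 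Finally the fourth term is the direct substitution $U=\projc(\Mb_{X,c,\rho}^{-1}\nabla\bar f(X))$ into~\eqref{eq:K_detail}, followed by left-multiplication by $\Mb_{X,c,\rho}^{-1}$ and using the same collapse identities. After collecting all four contributions, grouping the pieces that are annihilated or fixed by the outer $\projc$ and relabeling $I_{2n}-XJ_{2k}X^TJ_{2n}^T=P_X$ as in~\eqref{eq:prPXperp}, the expression should match the stated formula; I would also verify that $P_X$ restricted to the range of the second-and-below lines acts as the identity so those lines need no outer $\projc$ (they already lie in $\TX$), which is exactly why the statement separates a $\projc(\cdots)$ block from a $P_X(\cdots)$ block.

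The main obstacle is bookkeeping: the four terms produce on the order of a dozen matrix monomials, and the target formula is a particular \emph{canonical form} among many algebraically equivalent ones, reached only after repeatedly invoking (i) the tangency identities $X^TJ_{2n}^TZ\in\skewset(2k)$ and $\PXp X=0$, (ii) the inverse-metric collapse relations $\Mb_{X,c,\rho}^{-1}XX^T=\rho\,XX^T$ wait more carefully $\Mb_{X,c,\rho}^{-1}$ applied to things in the column space of $X$ versus of $J_{2n}X_\perp$, and (iii) $\sym/\skew$ splitting to merge symmetric-part terms. Keeping $\nabla\bar f(X)$ generic (not pre-projected) while tracking which terms survive the outer projection is the delicate part; a clean way to organize it is to first simplify $\grad_c f(X)$ to its two-summand form, then compute each of the four pieces of~\eqref{eq:Rhessian} as a separate displayed line, and only at the end push everything through $\projc$ using $\projc(XJ_{2k}\varOmega)=0$ for $\varOmega\in\skewset(2k)$ and $\projc$ acting as identity on $\TX$. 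I would present the proof as: "Apply Theorem (Riemannian Hessian) with $\Mb_X=\Mb_{X,c,\rho}$. By~\eqref{eq:B_X_E_inverse}, Lemmas~\ref{lem:Diff_orth_proj_canonical},~\ref{lem:Diff_B_X},~\ref{lem:compute_K}, and~\eqref{eq:Rgrad_canon}, each of the four terms in~\eqref{eq:Rhessian} evaluates as follows $[\ldots]$; summing and using $\projc|_{\TX}=\mathrm{id}$, $\projc(XJ_{2k}\skewset(2k))=0$, and the identity $J_{2n}X_\perp X_\perp^TJ_{2n}^T=\PXp$ yields the claim," with the term-by-term evaluations spelled out but routine.
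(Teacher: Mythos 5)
Your strategy is exactly the one the paper's Appendix~\ref{appendix:proof_Prop_Rhess_canon-like} follows: specialize~\eqref{eq:Rhessian} to $\Mb_{X,c,\rho}$, substitute~\eqref{eq:B_X_E_inverse}, Lemmas~\ref{lem:Diff_orth_proj_canonical}, \ref{lem:Diff_B_X}, \ref{lem:compute_K} and~\eqref{eq:Rgrad_canon}, drop the pieces that are annihilated by the outer $\projc$ because they lie in $\NXc$, and reorganize via $P_X$ identities. So there is no disagreement at the level of the overall plan. However, several of the concrete intermediate claims in your sketch are wrong, and they are load-bearing enough that the calculation would not land on the stated formula if carried out as written.

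First, you assert $J_{2n}X_\perp X_\perp^TJ_{2n}^T = P_X^T$. Under the normalization~\eqref{eq:choice_Xperp} the correct identity is $J_{2n}X_\perp X_\perp^TJ_{2n}^T = P_X^{}P_X^T$ (a \emph{symmetric} matrix, unlike $P_X^T$). This is exactly relation~\eqref{eq:PXperp} in the appendix, and it is the reason the paper's final expression contains factors $P_X^{}P_X^T$ and $P_XJ_{2n}XJ_{2k}$ rather than a bare $P_X^T$. Your simplified form $\rgradc X = \rho XJ_{2k}\sym(J_{2k}^TX^T\nabla\bar f(X)) + P_X^T\nabla\bar f(X)$ inherits this error; the correct form has $P_X^{}P_X^T\nabla\bar f(X)$, and the difference is not cosmetic because later factors act on the left and the two do not agree. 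Relatedly, you claim $\Mb_{X,c,\rho}^{-1}\PXp = \PXp$. Since $X^T\PXp=0$ and $P_X^{}P_X^T\PXp = (\PXp P_X^{}P_X^T)^T = (P_X^T)^T = P_X$, one actually gets $\Mb_{X,c,\rho}^{-1}\PXp = P_X$, not $\PXp$; this is precisely relation~\eqref{eq:K_fact5} (up to the extra $X(X^TX)^{-1}$ factor handled there), and the distinction matters whenever you try to "collapse" $\Mb_{X,c,\rho}^{-1}$ against terms produced by Lemmas~\ref{lem:Diff_B_X} and~\ref{lem:compute_K}.

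Second, you assert $X^TJ_{2n}^TZ \in \skewset(2k)$ for $Z\in\TX$, based on $\sym(X^TJ_{2n}^TZ) = -\tfrac12\D F_X(Z) = 0$. The roles of $\sym$ and $\skew$ are swapped here: from~\eqref{eq:DF}, $\skew(X^TJ_{2n}^TZ) = -\tfrac12\D F_X(Z)$, so for $Z\in\TX$ the matrix $X^TJ_{2n}^TZ$ is \emph{symmetric}, not skew. (Equivalently, from~\eqref{eq:tangentspace_2} one has $X^TJ_{2n}^TZ = W \in \symset(2k)$.) Any cancellation or "the skew operator kills this" step that relies on $X^TJ_{2n}^TZ$ being skew would therefore be invalid; the correct simplifications the appendix uses run through $\skew(X^TJ_{2n}^TZ)=0$, which is the complementary fact. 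These two families of errors (the $P_X^{}P_X^T$ identity and the $\sym$/$\skew$ mix-up) would systematically produce wrong signs, wrong projectors, and spurious extra terms, so while your high-level outline matches the paper, the term-by-term evaluation needs to be redone with the corrected identities before it can reach the stated expression.
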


The proof is lengthy and therefore moved to Appendix~\ref{appendix:proof_Prop_Rhess_canon-like}. Theorem~\ref{th:Rhess_canon-like} reveals that the Riemannian Hessian with respect to the canonical-like metric is indeed given explicitly in matrix form, which only involves matrix multiplications and inverses, and does not require solving a~matrix equation.

\begin{remark}
When the symplectic Stiefel manifold  $\Spkn$ reduces to the symplectic group $\Spn$, i.e., the special case $k=n$, we have $P_X=0$. Then the expression for the Riemannian Hessian in Theorem~\ref{th:Rhess_canon-like} is simplified to
\begin{align*}
	&\Hessc{X}{Z} =\projc\Bigl(
	\Mb_{X,c,\rho}^{-1}\Nablaf{X}{Z} - \rho\,  Z J_{2k}\skew\bigl(J_{2k}^TX^T\nabla \bar{f}(X)\bigr) \\
	& \qquad +2\,\rho\,\sym\bigl(XZ^T\bigr)\nabla\bar{f}(X)
	+ \rho\, X\sym\bigl(X^TJ_{2n} Z\nabla\bar{f}(X)^TXJ_{2k} 
	-Z^T\nabla\bar{f}(X)\bigr)\Bigr).
\end{align*}
\end{remark}

%
\subsection{Riemannian Hessian with respect to the weighted Euclidean metric}
\label{ssec:Rhessian_EuclideanM}
The Riemannian Hessian with respect to the weighted Euclidean metric $g_{\Mb}$ defined in~\eqref{eq:metric_euclM} 
is simpler than that derived for the canonical-like metric. Indeed, due to \mbox{$\Mb_{X}=\Mb$}, all terms in~\eqref{eq:Rhessian} containing the derivative of this matrix vanish. As a~consequence, we obtain that
\begin{equation}\label{eq:Rhessian_M}
\HessM{X}{Z} = \projM\bigl(\Mb^{-1}\Nablaf{X}{Z} + \D_{Z}^{}\projM(\Mb^{-1}\nabla\bar{f}(X))\bigr).
\end{equation}
Note that when $\Mb=I_{2n}$, the metric reduces to the Euclidean metric, and the above formulation is simplified to 
\[
\Hesse{X}{Z} = \proje\bigl(\Nablaf{X}{Z} + \D_{Z}^{}\proje(\nabla\bar{f}(X))\bigr),
\]
which coincides with the known result in \cite[(7)]{AbsMT13} for a~general Riemannian submanifold.

To get a detailed expression for the Riemannian Hessian $\hess_\Mb f(X)$, we need the directional derivative of the projection $\projM$. 
\begin{lemma}\label{lem:directionalDerivM}
Given $X\in \Spkn$, $Z\in\TX$, $Y\in \Rbnk$, and the orthogonal projection $\projM$ as in \eqref{eq:orth_proj_EuclidM}. Then 
the action of the directional derivative of~$\projM$ at $X$ in the direction $Z$  on $Y$ is computed as
\begin{equation}\label{eq:directionalDerivM}
	\D_{Z}\projM(Y) = -\Mb^{-1}J_{2n}( Z\varOmega_{X,Y}+X\varXi_{X,Y, Z}),
\end{equation}
where $\varOmega_{X,Y}$ solves the Lyapunov equation \eqref{eq:Lyapeq} and $\varXi_{X,Y,Z}$ is the solution to the Lyapunov equation
\begin{equation}\label{eq:eqdirectionalDerivM}
    \mathrm{Lyap}_{X,\Mb}(\varXi) = 2\,\skew \big( Z^TJ^T_{2n}Y + 2\,\varOmega_{X,Y}^T\sym(X^TJ_{2n}^T\Mb^{-1}J_{2n}^{} Z)\big).
\end{equation}
\end{lemma}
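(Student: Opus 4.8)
The plan is to differentiate the closed-form expression~\eqref{eq:orth_proj_EuclidM} for $\projM$ along a curve on $\Spkn$, using the chain rule together with the implicit differentiation of the Lyapunov equation~\eqref{eq:Lyapeq} that defines $\varOmega_{X,Y}$. First I would take a smooth curve $\varUpsilon(t)\subset\Spkn$ with $\varUpsilon(0)=X$ and $\dot\varUpsilon(0)=Z$, and write $\calP_{\varUpsilon(t),\Mb}(Y)=Y-\Mb^{-1}J_{2n}\varUpsilon(t)\varOmega_{\varUpsilon(t),Y}$. Since $\Mb$ is constant, differentiating at $t=0$ by the product rule gives
\[
\D_Z\projM(Y) = -\Mb^{-1}J_{2n}\bigl(Z\,\varOmega_{X,Y} + X\,\varXi_{X,Y,Z}\bigr),
\qquad \varXi_{X,Y,Z}:=\left.\tfrac{\d}{\d t}\varOmega_{\varUpsilon(t),Y}\right|_{t=0},
\]
which already has the stated form~\eqref{eq:directionalDerivM}. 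It remains to identify $\varXi_{X,Y,Z}$ as the solution of~\eqref{eq:eqdirectionalDerivM}.

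To do this, I would differentiate the defining relation $\mathrm{Lyap}_{\varUpsilon(t),\Mb}(\varOmega_{\varUpsilon(t),Y})=2\,\skew(\varUpsilon(t)^TJ_{2n}^TY)$ at $t=0$. Writing $A_X:=X^TJ_{2n}^T\Mb^{-1}J_{2n}X$ (the symmetric positive definite coefficient of $\mathrm{Lyap}_{X,\Mb}$), the left-hand side is $A_{\varUpsilon(t)}\varOmega_{\varUpsilon(t),Y}+\varOmega_{\varUpsilon(t),Y}A_{\varUpsilon(t)}$; its $t$-derivative at $0$ is
\[
\mathrm{Lyap}_{X,\Mb}(\varXi_{X,Y,Z}) + \bigl(\dot A\,\varOmega_{X,Y}+\varOmega_{X,Y}\dot A\bigr),
\qquad \dot A = Z^TJ_{2n}^T\Mb^{-1}J_{2n}X + X^TJ_{2n}^T\Mb^{-1}J_{2n}Z = 2\,\sym(X^TJ_{2n}^T\Mb^{-1}J_{2n}Z).
\]
The $t$-derivative of the right-hand side is $2\,\skew(Z^TJ_{2n}^TY)$. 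Equating and solving for $\mathrm{Lyap}_{X,\Mb}(\varXi_{X,Y,Z})$ yields
\[
\mathrm{Lyap}_{X,\Mb}(\varXi_{X,Y,Z}) = 2\,\skew(Z^TJ_{2n}^TY) - \dot A\,\varOmega_{X,Y} - \varOmega_{X,Y}\dot A .
\]
Since $\dot A$ is symmetric and $\varOmega_{X,Y}$ is skew-symmetric, $\dot A\,\varOmega_{X,Y}+\varOmega_{X,Y}\dot A = -2\,\skew(\varOmega_{X,Y}^T\dot A) = -4\,\skew\bigl(\varOmega_{X,Y}^T\sym(X^TJ_{2n}^T\Mb^{-1}J_{2n}Z)\bigr)$, and substituting back collects the right-hand side into exactly $2\,\skew\bigl(Z^TJ_{2n}^TY + 2\,\varOmega_{X,Y}^T\sym(X^TJ_{2n}^T\Mb^{-1}J_{2n}Z)\bigr)$, which is~\eqref{eq:eqdirectionalDerivM}. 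Finally, invertibility of $\mathrm{Lyap}_{X,\Mb}$ (noted after~\eqref{eq:Lyap_operator}) together with the skew-symmetry of the right-hand side guarantees that $\varXi_{X,Y,Z}$ is the unique skew-symmetric solution, so the formula is well posed.

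The only genuinely delicate point is the bookkeeping that turns the raw term $\dot A\,\varOmega_{X,Y}+\varOmega_{X,Y}\dot A$ into a single $\skew(\cdot)$ expression of the advertised shape; this rests on the symmetry/skew-symmetry parity of the factors and on the identity $M\,\Omega + \Omega\,M = -2\,\skew(\Omega^T M)$ valid when $M=M^T$ and $\Omega^T=-\Omega$. Everything else is a routine application of the product rule and of the already-established differentiation identity $\left.\tfrac{\d}{\d t}(\varUpsilon^T\varUpsilon)^{-1}\right|_{t=0}$-type computations (here not even needed, since $\Mb$ is constant). I would therefore present the proof as: (i) differentiate~\eqref{eq:orth_proj_EuclidM} along $\varUpsilon(t)$ to get the structural form~\eqref{eq:directionalDerivM}; (ii) differentiate the Lyapunov identity to pin down $\varXi_{X,Y,Z}$; (iii) simplify using the symmetry parities to obtain~\eqref{eq:eqdirectionalDerivM}.
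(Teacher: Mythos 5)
Your proposal is correct and follows essentially the same route as the paper: differentiate the closed form $\calP_{\varUpsilon(t),\Mb}(Y)=Y-\Mb^{-1}J_{2n}\varUpsilon(t)\varOmega_{\varUpsilon(t),Y}$ along a curve, and identify $\varXi_{X,Y,Z}=\dot\varOmega$ by implicitly differentiating the defining Lyapunov identity, using the symmetry of $\dot A=2\,\sym(X^TJ_{2n}^T\Mb^{-1}J_{2n}Z)$ and the skew-symmetry of $\varOmega_{X,Y}$ to fold $-(\dot A\,\varOmega+\varOmega\,\dot A)$ into the $\skew(\cdot)$ form. The paper merely presents the same implicit differentiation as a difference-quotient limit (subtracting the Lyapunov equations at $\varUpsilon(t)$ and $X$, dividing by $t$, and letting $t\to 0$), so the two arguments are computationally identical.
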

\begin{proof}
Let $\varUpsilon(t) \subset \Spkn$ be a~smooth curve defined on a~neighborhood of~$t=0$ such that $ \varUpsilon(0) = X$ and $\dot{ \varUpsilon}(0) =  Z$. By definition, it holds that
\begin{align}\notag
	\D_{Z}^{}\projM(Y) &= \lim\limits_{t\to 0}\frac{1}{t}\big(\calP_{ \varUpsilon(t),\Mb}(Y) - \calP_{ \varUpsilon(0),\Mb}(Y)\big)\\ \notag
	&= \lim\limits_{t\to 0}\frac{1}{t}(Y - \Mb^{-1}J_{2n} \varUpsilon(t)\varOmega_{ \varUpsilon(t),Y} - Y + \Mb^{-1} J_{2n}X\varOmega_{X,Y})\\ 
	&=-\Mb^{-1}J_{2n}\lim\limits_{t\to 0}\frac{1}{t}\big(( \varUpsilon(t) -  X) \varOmega_{ \varUpsilon(t),Y} + X( \varOmega_{ \varUpsilon(t),Y}- \varOmega_{X,Y})\big).
	\label{eq:directionalDeriv_proof1M}
\end{align}
Further, subtracting equation \eqref{eq:Lyapeq} with $\varOmega=\varOmega_{X,Y}$ from the one that determines $ \varOmega_{ \varUpsilon(t),Y}$, i.e.,
\begin{equation*}
    \mathrm{Lyap}_{\varUpsilon(t),\Mb}(\varOmega_{\varUpsilon(t),Y})= 2\,\skew( \varUpsilon(t)^TJ_{2n}^TY),
\end{equation*}
we obtain that
\begin{align}\notag
	&2\,\skew\big((\varUpsilon(t)-X)^TJ_{2n}^TY\big) \\\notag
	&\quad 
    = \varUpsilon(t)^T J_{2n}^T\Mb^{-1}J_{2n}^{}\varUpsilon(t) \varOmega_{ \varUpsilon(t),Y} 
    - X^TJ_{2n}^T\Mb^{-1}J_{2n}^{}X \varOmega_{X,Y}\\\notag 
    & \qquad + \varOmega_{ \varUpsilon(t),Y} \varUpsilon(t)^T J_{2n}^T\Mb^{-1}J_{2n}^{} \varUpsilon(t) 
    - \varOmega_{X,Y}X^TJ_{2n}^T\Mb^{-1}J_{2n}^{}X\\\notag
	&\quad =\big(( \varUpsilon(t) - X)^TJ_{2n}^T\Mb^{-1}J_{2n}^{} \varUpsilon(t) +X^TJ_{2n}^T\Mb^{-1}J_{2n}^{}( \varUpsilon(t) -X)\big) \varOmega_{ \varUpsilon(t),Y} \\\notag
	&\qquad +  \varOmega_{ \varUpsilon(t),Y}\big(( \varUpsilon(t) - X)^T J_{2n}^T\Mb^{-1}J_{2n}^{}\varUpsilon(t) 
 + X^TJ_{2n}^T\Mb^{-1}J_{2n}^{}( \varUpsilon(t) -X)\big) \\ \notag
    &\qquad + X^T\!J_{2n}^T\Mb^{-1}J_{2n}^{}X( \varOmega_{ \varUpsilon(t),Y} -  \varOmega_{X,Y}) 
    + ( \varOmega_{ \varUpsilon(t),Y} -  \varOmega_{X,Y})X^T\!J_{2n}^T\Mb^{-1}J_{2n}^{}X. \notag
\end{align}
Dividing both sides of this equation by $t$ and taking the limit as $t\to 0$ with the note that $\lim\limits_{t\to 0} \varOmega_{ \varUpsilon(t),Y} =  \varOmega_{X,Y}$ as a consequence of the continuity,  it follows that
\begin{align*}
	2\,\skew( Z^TJ_{2n}^TY) &= 2\,\sym(Z^T\!J_{2n}^T\Mb^{-1}\!J_{2n}^{}X) \varOmega_{X,Y} 
    + 2\,\varOmega_{X,Y}\sym(Z^T\!J_{2n}^T\Mb^{-1}\!J_{2n}^{}X) \\
    & \quad + X^TJ_{2n}^T\Mb^{-1}J_{2n}^{}X\lim\limits_{t\to 0}\frac{1}{t}\left( \varOmega_{ \varUpsilon(t),Y}- \varOmega_{X,Y}\right) \\\notag
    & \quad + \lim\limits_{t\to 0}\frac{1}{t}\left( \varOmega_{ \varUpsilon(t),Y}\!-\! \varOmega_{X,Y}\right) X^TJ_{2n}^T\Mb^{-1}J_{2n}^{}X.
\end{align*}
Then, the limit $\lim\limits_{t\to 0}\frac{1}{t}\left( \varOmega_{ \varUpsilon(t),Y}- \varOmega_{X,Y}\right)$ exists and satisfies a matrix equation which is, by the skew-symmetry of $ \varOmega_{X,Y}$ and symmetry of $\sym(Z^TJ_{2n}^T\Mb^{-1}J_{2n}^{}X)$, is given by~\eqref{eq:eqdirectionalDerivM}.
Thus, the statement follows from \eqref{eq:directionalDeriv_proof1M}. \qed
\end{proof}

Using Lemma~\ref{lem:directionalDerivM}, we obtain the following expression for the Riemannian Hessian of a~function $f$ with respect to the weighted Euclidean metric. 
\begin{theorem}\label{th:hessianM}
Given a $C^2$-function $f:\Spkn\longrightarrow\R$ with its ambient gradient and Hessian $\nabla\bar{f}(X)$ and $\nabla^2\bar{f}(X)$, respectively. The Riemannian Hessian of $f$ at \mbox{$X\in \Spkn$} with respect to the weighted Euclidean metric $g_\Mb$ in~\eqref{eq:metric_euclM} applied to $Z \in \TX$ is computed as
\begin{equation}\label{eq:hessM}
	\HessM{X}{Z} = \Mb^{-1}\big(\Nablaf{X}{Z} -J_{2n} Z \varOmega_{X,\Mb^{-1}\nabla\bar{f}} - J_{2n}X \varTheta_{X,\Mb^{-1}\nabla^2\bar{f}, Z}\big),	
\end{equation}
where $\varOmega_{X,\Mb^{-1}\nabla\bar{f}}$ is the solution to the Lyapunov equation~\eqref{eq:Lyapeq} with $Y=\Mb^{-1}\nabla \bar{f}(X)$ and $\varTheta_{X,\Mb^{-1}\nabla^2\bar{f}, Z}$ is the solution to the Lyapunov equation
\begin{equation*}
    \mathrm{Lyap}_{X,\Mb}(\varTheta) = 2\,\skew\big(X^TJ_{2n}^T\Mb^{-1} \Nablaf{X}{Z} -X^TJ_{2n}^T\Mb^{-1}J_{2n}^{} Z \varOmega_{X,\Mb^{-1}\nabla\bar{f}}\big).
\end{equation*}
\end{theorem}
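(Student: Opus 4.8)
\textbf{Proof plan for Theorem~\ref{th:hessianM}.}
The plan is to specialize the general Hessian formula~\eqref{eq:Rhessian_M} to the weighted Euclidean metric and then substitute the explicit form of the directional derivative of the projection obtained in Lemma~\ref{lem:directionalDerivM}. First I would start from
\[
\HessM{X}{Z} = \projM\bigl(\Mb^{-1}\Nablaf{X}{Z} + \D_{Z}^{}\projM(\Mb^{-1}\nabla\bar{f}(X))\bigr),
\]
which holds because $\Mb_X\equiv\Mb$ is constant, so all terms in~\eqref{eq:Rhessian} involving $\D_Z\Mb_X$ and $\mathcal{K}$ drop out. Writing $Y=\Mb^{-1}\nabla\bar{f}(X)$ for brevity, Lemma~\ref{lem:directionalDerivM} gives $\D_{Z}\projM(Y) = -\Mb^{-1}J_{2n}(Z\varOmega_{X,Y}+X\varXi_{X,Y,Z})$, where $\varOmega_{X,Y}=\varOmega_{X,\Mb^{-1}\nabla\bar{f}}$ solves~\eqref{eq:Lyapeq} and $\varXi_{X,Y,Z}$ solves~\eqref{eq:eqdirectionalDerivM}.

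Next I would argue that after applying $\projM$ the contribution can be collapsed into a single outer $\Mb^{-1}(\cdots)$ with one Lyapunov solve. The key observation is that $\projM$ has the form $\projM(Y')=Y'-\Mb^{-1}J_{2n}X\varOmega_{X,Y'}$ from~\eqref{eq:orth_proj_EuclidM}. Applying this to $Y'=\Mb^{-1}\Nablaf{X}{Z} - \Mb^{-1}J_{2n}Z\varOmega_{X,\Mb^{-1}\nabla\bar{f}} - \Mb^{-1}J_{2n}X\varXi_{X,\Mb^{-1}\nabla\bar f,Z}$, I note that the last summand $-\Mb^{-1}J_{2n}X\varXi$ already lies in the normal space $\NXM$ (by~\eqref{eq:normalspaceM}, since $\varXi$ is skew-symmetric), hence it is annihilated by $\projM$; equivalently, it can be merged into the single normal-space correction term. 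Thus
\[
\HessM{X}{Z} = \Mb^{-1}\bigl(\Nablaf{X}{Z} - J_{2n}Z\varOmega_{X,\Mb^{-1}\nabla\bar{f}}\bigr) - \Mb^{-1}J_{2n}X\,\varTheta,
\]
where $\varTheta$ is the skew-symmetric solution to the Lyapunov equation~\eqref{eq:Lyap} with right-hand side $2\,\skew\bigl(X^TJ_{2n}^T\cdot(\Mb^{-1}\Nablaf{X}{Z} - \Mb^{-1}J_{2n}Z\varOmega_{X,\Mb^{-1}\nabla\bar f})\bigr)$. Expanding $2\,\skew(X^TJ_{2n}^T\Mb^{-1}\Nablaf{X}{Z} - X^TJ_{2n}^T\Mb^{-1}J_{2n}Z\varOmega_{X,\Mb^{-1}\nabla\bar f})$ gives exactly the claimed right-hand side for $\varTheta$, and combining all $\Mb^{-1}$-factors yields~\eqref{eq:hessM}.

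The bookkeeping obstacle is ensuring the two distinct Lyapunov-solve objects are correctly tracked: $\varXi_{X,Y,Z}$ from Lemma~\ref{lem:directionalDerivM} carries a cross term $2\,\varOmega_{X,Y}^T\sym(X^TJ_{2n}^T\Mb^{-1}J_{2n}Z)$ in its right-hand side, but once this correction is composed with the outer projection $\projM$ and re-expressed as a single Lyapunov problem with right-hand side $2\,\skew\bigl(X^TJ_{2n}^T\Mb^{-1}(\Nablaf{X}{Z} - J_{2n}Z\varOmega_{X,\Mb^{-1}\nabla\bar f})\bigr)$, one must verify the two formulations agree — i.e.\ that $\varTheta = \varXi + \varOmega_{X,Y''}$ where $Y''$ is the argument of the outer projection. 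This follows because the Lyapunov operator is linear and invertible (as noted after~\eqref{eq:Lyap_operator}) and the skew-symmetrization identity $\skew(AB) + \skew(B^TA^T)=2\,\skew(AB)$ together with $\skew(CD^T)$-type rearrangements reconciles the cross terms; I would carry this verification out explicitly but it is routine linear algebra once the structure above is in place.
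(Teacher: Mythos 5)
Your middle paragraph reproduces the paper's proof exactly: specialize~\eqref{eq:Rhessian} to $\Mb_X\equiv\Mb$ to get~\eqref{eq:Rhessian_M}, invoke Lemma~\ref{lem:directionalDerivM}, observe that $\Mb^{-1}J_{2n}X\varXi_{X,\Mb^{-1}\nabla\bar f,Z}$ lies in $\NXM$ (since $\varXi$ is skew-symmetric) and so is killed by $\projM$, and then unpack the remaining outer $\projM$ via~\eqref{eq:orth_proj_EuclidM} to produce the single Lyapunov solve $\varTheta$. That is a complete and correct argument, identical in route to the paper's. Your final paragraph is unnecessary---there is no reconciliation to perform once $\varXi$ drops out---and the identity you cite there, $\skew(AB)+\skew(B^TA^T)=2\skew(AB)$, is false (the left side is $0$, since $\skew(M^T)=-\skew(M)$); fortunately that paragraph plays no role in the argument you actually give.
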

\begin{proof}
Taking \eqref{eq:Rhessian_M} and \eqref{eq:directionalDerivM} into account, it holds that
\begin{align}
	\HessM{X}{Z}=\ &\projM\big(\Mb^{-1}\Nablaf{X}{Z}  \notag\\
        &
      - \Mb^{-1}J_{2n} ( Z \varOmega_{X,\Mb^{-1}\nabla\bar{f}} 
      + X \varXi_{X,\Mb^{-1}\nabla\bar{f}, Z})\big), \label{eq:hess_proof1M}
\end{align}
where $ \varOmega_{X,\Mb^{-1}\nabla\bar{f}}$ and $ \varXi_{X,\Mb^{-1}\nabla\bar{f}, Z}$ are as in Lemma~\ref{lem:directionalDerivM} with $Y=\Mb^{-1}\nabla\bar{f}(X)$. Since the matrix $ \varXi_{X,\Mb^{-1}\nabla\bar{f}, Z}$ is skew-symmetric, $\Mb^{-1}J_{2n}X \varXi_{X,\Mb^{-1}\nabla\bar{f}, Z}$ is a~normal vector and therefore disappears under the action of the projection $\projM$. This turns the equality \eqref{eq:hess_proof1M} into
\begin{equation}\label{eq:hess_proofM}
	\HessM{X}{Z} = \projM\big(\Mb^{-1}\Nablaf{X}{Z} - \Mb^{-1}J_{2n} Z \varOmega_{X,\Mb^{-1}\nabla\bar{f}}\big).
\end{equation}
Finally, \eqref{eq:hessM} is derived by using the definition of $\projM$ in \eqref{eq:orth_proj_EuclidM}. \qed
\end{proof}

Note that the computation of the Riemannian Hessian \eqref{eq:hessM} with respect to the weighted Euclidean metric $g_\Mb$ involves solving two $2k\times2k$ Lyapunov equations, which are coupled. On the contrary, the counterpart with the canonical-like metric only requires matrix multiplications and inverses. Moreover, setting $\Mb = I_{2n}$, we derive a formula for the Riemannian Hessian with respect to the (classical) Euclidean metric as follows.

\begin{corollary}\label{th:hessian}
Given a $C^2$-function $f:\Spkn\longrightarrow\R$ with its ambient gradient and Hessian $\nabla\bar{f}(X)$ and $\nabla^2 \bar{f}(X)$, respectively. The Riemannian Hessian of $f$ at $X\in \Spkn$ with respect to the Euclidean metric 
applied to $ Z \in \TX$ is computed as
\begin{align*}
	\Hesse{X}{Z} &= \proje\big(\Nablaf{X}{Z} - J_{2n} Z \varOmega_{X,\nabla\bar{f}}\big)\\ 
 &= \Nablaf{X}{Z} -J_{2n} Z \varOmega_{X,\nabla\bar{f}} - J_{2n}X \varTheta_{X,\nabla^2\bar{f}, Z}, 
 \end{align*}
where $ \varOmega_{X,\nabla\bar{f}}$ and $\varTheta_{X,\nabla^2\bar{f}, Z}$ are the solutions to the Lyapunov equations 
\begin{align*} 
	X^TX \varOmega +  \varOmega X^TX 
 & = 2\,\skew\big(X^TJ_{2n}^T\nabla\bar{f}(X)\big),\qquad\qquad\qquad\\
	X^TX \varTheta +  \varTheta X^TX 
 & = 2\,\skew\big(X^TJ_{2n}^T \Nablaf{X}{Z} -X^T Z \varOmega_{X,\nabla\bar{f}}\big),
\end{align*}
respectively.
\end{corollary}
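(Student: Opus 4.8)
The plan is to obtain Corollary~\ref{th:hessian} as the $\Mb=I_{2n}$ specialization of Theorem~\ref{th:hessianM}, so no new limiting computation is required. The single algebraic fact doing the work is $J_{2n}^TJ_{2n}=I_{2n}$: it turns $X^TJ_{2n}^T\Mb^{-1}J_{2n}X$ into $X^TX$ when $\Mb=I_{2n}$, so the Lyapunov operator in~\eqref{eq:Lyap_operator} collapses to $\varOmega\mapsto X^TX\varOmega+\varOmega X^TX$, and the Lyapunov equation~\eqref{eq:Lyapeq} with $Y=\nabla\bar{f}(X)$ becomes precisely the first equation of the corollary that defines $\varOmega_{X,\nabla\bar{f}}$.

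First I would put $\Mb=I_{2n}$ in the intermediate identity~\eqref{eq:hess_proofM} from the proof of Theorem~\ref{th:hessianM}; this immediately yields the first displayed equality $\Hesse{X}{Z}=\proje\bigl(\Nablaf{X}{Z}-J_{2n}Z\varOmega_{X,\nabla\bar{f}}\bigr)$. Then I would unfold $\proje$ via its closed form~\eqref{eq:orth_proj_EuclidM} with $\Mb=I_{2n}$, i.e. $\proje(W)=W-J_{2n}X\varOmega_{X,W}$ where $X^TX\varOmega_{X,W}+\varOmega_{X,W}X^TX=2\,\skew(X^TJ_{2n}^TW)$, applied to $W:=\Nablaf{X}{Z}-J_{2n}Z\varOmega_{X,\nabla\bar{f}}$. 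Using once more that $X^TJ_{2n}^TJ_{2n}Z=X^TZ$, the right-hand side becomes $2\,\skew\bigl(X^TJ_{2n}^T\Nablaf{X}{Z}-X^TZ\varOmega_{X,\nabla\bar{f}}\bigr)$, so $\varOmega_{X,W}=\varTheta_{X,\nabla^2\bar{f},Z}$, which gives the second displayed equality.

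I do not anticipate any real obstacle: the whole argument is bookkeeping with $J_{2n}^T=-J_{2n}$ and $J_{2n}^TJ_{2n}=I_{2n}$ so that the two coupled Lyapunov equations appear in exactly the form stated, together with the immediate observation that $\varTheta_{X,\nabla^2\bar{f},Z}$ is skew-symmetric because its right-hand side is (hence the normal component it would produce is consistent with the projection step). As a sanity check, one would confirm that the resulting expressions agree with the Euclidean-metric Hessian recorded in~\cite{GSAS21a}.
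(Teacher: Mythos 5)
Your proposal is correct and matches the paper's own approach: the paper introduces Corollary~\ref{th:hessian} precisely as the $\Mb=I_{2n}$ specialization of Theorem~\ref{th:hessianM}, and the only algebra needed is $J_{2n}^TJ_{2n}=I_{2n}$ to collapse $X^TJ_{2n}^T\Mb^{-1}J_{2n}X$ to $X^TX$ and $X^TJ_{2n}^T\Mb^{-1}J_{2n}Z$ to $X^TZ$, exactly as you do. Your additional check that the right-hand sides of both Lyapunov equations are skew-symmetric (so the solutions are skew-symmetric, consistent with the normal-space structure) is a correct and appropriate sanity check.
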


\begin{remark}\label{rem:Hess_M}
	The Riemannian Hessian $\hess_\Mb f(X)$ in \eqref{eq:hessM} can also be viewed as a~bilinear form on $\TX\times \TX$. Given \mbox{$Z,U\in \TX$}, as the last term in \eqref{eq:hessM} belongs to the normal space at $X$ given in~\eqref{eq:normalspaceM}, we get 
	\begin{align*}
		\hess_\Mb f(X)[Z, U] &= \langle U,\HessM{X}{Z}\rangle \\
  & = \tr\big(U^T\Mb^{-1}(\Nablaf{X}{Z} -J_{2n} Z \varOmega_{X,\Mb^{-1}\nabla\bar{f}})\big).
	\end{align*}
	In the particular case $k=n$ and $\Mb=I_{2n}$, this expression coincides with that  presented in \textup{\cite[Thm.~3]{BirtCC20}}. Thus, our findings extend the results of \textup{\cite{BirtCC20}}, derived for the symplectic group $\Spn$, to the symplectic Stiefel manifold $\Spkn$. 
\end{remark}

%
\section{Riemannian Newton methods}
\label{sec:Solve_NewtonEq}
In this section, we present the Riemannian Newton method for solving the constrained minimization problem \eqref{eq:opt_prob} and discuss its variants; the convergence properties will be studied in the next section. To enable the presentation, we need a~retraction which allows moving in the direction of a~tangent vector while staying on the manifold. Note, however, that the proposed Newton's methods in this section as well as their convergence property do not depend on a~specific retraction. 

Arising as an approximation to the exponential mapping, which is not always computationally tractable, retractions are becoming a~reliable tool in Riemannian optimization. Denote by $\romT\Spkn$ the~tangent bundle of $\Spkn$. A~smooth mapping \mbox{$\mathcal{R}:\romT\Spkn \to \Spkn$} is called a~\emph{retraction} if for all \mbox{$X\in\Spkn$}, the restriction of $\mathcal{R}$ to $\TX$, denoted by $\mathcal{R}_X$, 
satisfies the following properties:
\begin{enumerate}
	\item[1)] $\mathcal{R}_X(0_X)=X$, where $0_X$ denotes the origin of $\TX$;
	\item[2)] $\left.\tfrac{{\rm d}}{{\rm d}t} \mathcal{R}_X(t\,Z)\right|_{t=0}=Z$ for all $Z\in \TX$.
\end{enumerate}

Several retractions on the symplectic Stiefel manifold $\Spkn$ have been
constructed in \cite{GSAS21,GSS24,BendZ21,OviH23,JenZ24}. Here, we briefly review only those which will be used in the numerical experiments in Section~\ref{sec:Numer}. 

\paragraph{Cayley retraction}
Based on the Cayley transformation, the \emph{Cayley retraction} is given by
\[
\calR_X^{\mathrm{cay}}(Z) = \Big(I_{2n}-\frac{1}{2}S_{X,Z}J_{2n}\Big)^{-1}
\Big(I_{2n} + \frac{1}{2}S_{X,Z}J_{2n}\Big)X
\]
with $S_{X,Z} = G_XZ(XJ_{2k})^T + XJ_{2k}(G_XZ)^T$ and $G_X = I_{2n}-\frac{1}{2}XJ_{2k}^{}X^TJ_{2n}^T$. It has been shown in \cite[Prop.~5.4]{GSAS21} that $\calR_X^{\mathrm{cay}}(tZ)$ is defined for any \mbox{$t\geq 0$} if and only if $S_{X,Z}J_{2n}$ has no nonzero real eigenvalues, i.e., it is locally defined. If $k$ is considerably smaller than $n$, then in addition to \cite[(5.7)]{GSAS21}, the authors of \cite{BendZ21,OviH23} suggested an~economical formula  
\begin{equation}\label{eq:CayRetr_econ}
\mathcal{R}_X^{\mathrm{cay}}(Z) = 
-X+\bigl(P_X Z+2X\bigr)\Bigl(I_{2k}+\frac{1}{4}J_{2k}^TZ^TJ_{2n}^{}(P_X Z+2X)\Bigr)^{-1}
\end{equation}
with $P_X$ as in \eqref{eq:prPXperp}.
The computation of this retraction requires solving a~linear system with a~\mbox{$2k\times 2k$} matrix only. Note that the Cayley retraction belongs to a~family of retractions recently introduced in~\cite{OviH23}. Another Cayley transformation-based retraction has been proposed in \cite{JenZ24}, which provides a~better approximation to the geodesic but it is more expensive to compute compared to the retraction in~\eqref{eq:CayRetr_econ}.

\paragraph{SR retraction}
Recalling that a~matrix $A\in\Rbnk$ having full rank can in general be decomposed as $A = SR$, where $S\in\Spkn$ and $R\in\R^{2k\times 2k}$ is congruent to an~upper triangular matrix by a~permutation matrix 
\[
P_{2k}^{ } = [\e_1, \e_3,\ldots, \e_{2k-1},\e_2,\ldots,\e_{2k}] \in \mathbb{R}^{2k\times 2k},
\]
where $\e_j\in\mathbb{R}^{2k}$ denotes the $j$-th unit vector, see \cite[Sect.~3.3]{GSS24} for detail. If $R$ is additionally restricted to the matrix set
\begin{equation*}
\arraycolsep=2pt
\begin{array}{rl}
	T_{2k}^0(P_{2k}) =\bigl\{P_{2k}^T\hat{R}P^{ }_{2k}\ :\ 
	\hat{R}=[r_{ij}]\!\in\! &\!\mathbb{R}^{2k\times 2k} \text{ is upper triangular with } r_{2j-1,2j} = 0\bigr. \\ 
	&  \bigl.  
	 \text{ and } 
	|r_{2j,2j}|=r_{2j-1,2j-1}>0, \,   
	j = 1,\ldots, k\bigr\},
\end{array}
\end{equation*}
this decomposition is unique. Such a~decomposition can be computed by using a~symplectic Gram--Schmidt procedure \cite{Sala05}. Based on the SR decomposition, the  \emph{SR retraction} is then defined as
\begin{equation}\label{eq:SRRetr}
\calR_X^{\mathrm{SR}}(Z) = \mbox{sf}(X+Z), 
\end{equation}
where $\mbox{sf}(\cdot)$ denotes the symplectic factor $S$ in the SR decomposition. It follows from \cite[Thm.~3.3]{GSS24} that if $\|Z\|_{\Mb_X} < \sqrt{\lambda_{\min}(\Mb_X)}$,
where $\lambda_{\min}(\Mb_X)$ denotes the smallest eigenvalue of $\Mb_X$,  
then $X+Z$ has an~SR decomposition and \eqref{eq:SRRetr} is uniquely determined. This means that the SR retraction is locally defined. 

\medskip
We are now ready to state the Riemannian Newton method. In a~general setting, starting with an~initial guess $X_0\in\Spkn$,  the Newton search direction is computed by solving the Newton equation
\begin{equation}\label{eq:Newtoneq_general}
	\Hess{X_j}{Z_j} = -\grad f(X_j)
\end{equation}
for $Z_j \in \TX$. Then the iterate is updated as $X_{j+1}=\mathcal{R}_{X_j}(Z_j)$ with a~retraction $\mathcal{R}$ defined on $\Spkn$. The resulting Riemannian Newton method is summarized in Algorithm~\ref{alg:Newton_plain}.

\begin{algorithm}[htbp]
\caption{Riemannian Newton method (RN)}
\label{alg:Newton_plain}
\begin{algorithmic}[1]
	\Require Starting point $X_0\in\Spkn$, maximal number of iterations \texttt{mxit} and the stopping criterion \texttt{tol}. Set $j=0$.
	\While{$\|\grad f(X_j)\|_{\Mb_{X_j}} >$ \texttt{tol} and $j <$ \texttt{mxit}}
	\State  Solve the Newton equation $\Hess{X_j}{Z_j}\! = -\grad f(X_j)$ for $Z_j\!\in\! \TX$.\!\!
	\State Update $X_{j+1}  = \calR_{X_j}( Z_j)$.
	\State Set $j=j+1$. 
	\EndWhile
\end{algorithmic}
\end{algorithm}

The heart of the Riemannian Newton method is to solve the Newton equation~\eqref{eq:Newtoneq_general}. Unfortunately, its left-hand side is rather complex: either by a long formula or being extremely implicit via coupled Lyapunov equations. In the subsequent subsections, we propose different frameworks to address this problem.

%
\subsection{Saddle point problem}
\label{ssec:saddle_point}
We now study the Newton equation \eqref{eq:Newtoneq_general} in more detail. For the sake of brevity, we consider the weighted Euclidean metric only. The canonical-like metric \eqref{eq:canonical_metric} can be treated analogously. Furthermore, to shorten the notation, we omit the subscript~$j$. 

Using \eqref{eq:hess_proofM}, the Newton equation \eqref{eq:Newtoneq_general} can be written as
\begin{equation}\label{eq:Newtoneq}
\projM\big(\Mb^{-1}\Nablaf{X}{Z} -\Mb^{-1}J_{2n} Z \varOmega_{X,\Mb^{-1}\nabla\bar{f}}\big) 
    = -\rgradM{X}
\end{equation}
with unknown $Z \in \TX$.
We aim to turn this equation into a~saddle point problem by getting rid of the projection $\projM$. To this end, let us introduce a~linear operator 
\begin{equation}\label{eq:LinOp}
\begin{array}{rccl}
	\varPsi_{f,X}\ :&\Rbnk &\longrightarrow &\Rbnk\\
	& Y &\longmapsto & \Mb^{-1}\Nablaf{X}{Y} -\Mb^{-1}J_{2n} Y \varOmega_{X,\Mb^{-1}\nabla\bar{f}}.
\end{array}
\end{equation}
In view of $Z \in \TX$, it follows that $Z=\projM (Z)$. Combining with the fact that $\rgradM{X}=\projM\big(\Mb^{-1}\nabla \bar{f}(X)\big)$, equation~\eqref{eq:Newtoneq} is equivalent to 
\begin{equation}\label{eq:Newtoneq_2}
\projM \varPsi_{f,X}^{} \projM (Z) = \projM(G)
\end{equation}
with $G=-\nabla\bar{f}(X)$.
The following proposition shows that the solution to this equation can be determined by solving a~saddle point problem. 

\begin{proposition}\label{prop:NewtonEq-SaddlePoint}
Let $X\in \Spkn$, $ G=-\nabla\bar{f}(X)$, and let $ \varPsi_{f,X}$ be defined as in~\eqref{eq:LinOp}. 
If $Z\in\TX$ is a~solution to the Newton equation \eqref{eq:Newtoneq_2},
then $(Z, \varOmega)$ with 
\begin{equation}\label{eq:solU}
	\varOmega = \big(\D F_X^{} \Mb^{-1}\D F_X^*\big)^{-1}\D F_X^{} \big(G - \varPsi_{f,X}(Z)\big) \in \skewset(2k)
\end{equation}
is a~solution to the saddle point problem
\begin{subequations}\label{eq:saddle_point}
	\begin{alignat}{3}\label{eq:saddle_point_1}
		& \varPsi_{f,X}(Z) + \Mb^{-1}\D F_X^*(\varOmega) &\,=G,\\ \label{eq:saddle_point_2}
		&\D F_X^{}(Z) &=0.
	\end{alignat}
\end{subequations}
Conversely, if $(Z, \varOmega) \in \Rbnk \times \skewset(2k)$ is a~solution to the saddle point prob\-lem~\eqref{eq:saddle_point}, then $ Z\in \TX$ and it solves the Newton equation \eqref{eq:Newtoneq_2}.
\end{proposition}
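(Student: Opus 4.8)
The plan is to prove the two implications separately, using the key fact that the ambient space decomposes orthogonally (with respect to the weighted Euclidean metric $g_{\Mb}$) as $\Rbnk = \TX \oplus \NXM$, where $\NXM = \{\Mb^{-1}\D F_X^*(\varOmega) : \varOmega\in\skewset(2k)\}$ by~\eqref{eq:normalspaceM} and \eqref{eq:DF_star_detail}, and the projection $\projM$ annihilates exactly the normal part.

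First, for the forward direction, I would assume $Z\in\TX$ solves \eqref{eq:Newtoneq_2} and define $\varOmega$ by \eqref{eq:solU}; the point is to check that $\varOmega$ is well-defined and skew-symmetric, and that $(Z,\varOmega)$ satisfies \eqref{eq:saddle_point}. Equation \eqref{eq:saddle_point_2} is immediate since $Z\in\TX=\ker(\D F_X)$. For \eqref{eq:saddle_point_1}, I would apply $\D F_X$ to both sides: the right-hand side gives $\D F_X(G)$, while the left-hand side gives $\D F_X\varPsi_{f,X}(Z) + \D F_X\Mb^{-1}\D F_X^*(\varOmega)$, and substituting the definition~\eqref{eq:solU} of $\varOmega$ and using the invertibility of $\D F_X\Mb^{-1}\D F_X^*$ shows these agree. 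This only establishes that $G - \varPsi_{f,X}(Z) - \Mb^{-1}\D F_X^*(\varOmega)$ lies in $\ker(\D F_X)=\TX$; to upgrade this to equality with $0$, I would apply $\projM$ to \eqref{eq:saddle_point_1}: since $\Mb^{-1}\D F_X^*(\varOmega)\in\NXM$, the projection kills it, and $\projM\varPsi_{f,X}(Z) = \projM\varPsi_{f,X}\projM(Z)$ (using $Z=\projM(Z)$) equals $\projM(G)$ by \eqref{eq:Newtoneq_2}; so the projection of the residual vanishes. Combining the two facts — the residual is both tangent and has zero tangential projection — gives that the residual is zero, i.e.\ \eqref{eq:saddle_point_1} holds. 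The skew-symmetry of $\varOmega$ follows because $\varOmega = \Mb^{-1}\D F_X^*$-preimage of a tangent vector lies in the range of $(\D F_X\Mb^{-1}\D F_X^*)^{-1}\D F_X$, which maps into $\skewset(2k)$ since $\D F_X$ takes values in $\skewset(2k)$.

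Conversely, suppose $(Z,\varOmega)\in\Rbnk\times\skewset(2k)$ solves \eqref{eq:saddle_point}. From \eqref{eq:saddle_point_2}, $Z\in\ker(\D F_X)=\TX$, so $Z=\projM(Z)$. Applying $\projM$ to \eqref{eq:saddle_point_1} and again using $\projM(\Mb^{-1}\D F_X^*(\varOmega))=0$ because $\Mb^{-1}\D F_X^*(\varOmega)\in\NXM$, we obtain $\projM\varPsi_{f,X}(Z)=\projM(G)$, and inserting $Z=\projM(Z)$ on the left gives exactly \eqref{eq:Newtoneq_2}. Thus $Z$ solves the Newton equation.

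The main obstacle, and the only genuinely delicate point, is justifying that $\projM$ annihilates $\Mb^{-1}\D F_X^*(\varOmega)$ for every $\varOmega\in\skewset(2k)$ — i.e.\ that $\Mb^{-1}\D F_X^*(\varOmega)$ is genuinely $g_{\Mb}$-normal — and cleanly combining "the residual is tangent'' with "the tangential projection of the residual is zero'' to conclude the residual vanishes; this is where the interplay between the operator-valued projection formula \eqref{eq:Du_orth_proj}, the characterization \eqref{eq:normalspaceM} of the normal space, and the injectivity of $\D F_X^*$ all come into play. Everything else is a routine application of linear algebra in the decomposition $\Rbnk=\TX\oplus\NXM$.
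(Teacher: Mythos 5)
Your proof is correct, and the converse direction coincides with the paper's argument. Only the forward direction is organized differently. You prove \eqref{eq:saddle_point_1} by a two-step verification on the residual $R := G - \varPsi_{f,X}(Z) - \Mb^{-1}\D F_X^*(\varOmega)$: first, applying $\D F_X$ and invoking the definition~\eqref{eq:solU} together with the invertibility of $\D F_X\Mb^{-1}\D F_X^*$, you conclude $\D F_X(R)=0$, i.e.\ $R\in\TX$; second, applying $\projM$ and using $\Mb^{-1}\D F_X^*(\varOmega)\in\NXM$, $Z=\projM(Z)$, and the Newton equation, you conclude $\projM(R)=0$; finally you combine these to get $R=\projM(R)=0$. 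The paper instead reads off from \eqref{eq:Du_orth_proj} that $\Mb^{-1}\D F_X^*(\varOmega)=\bigl(\mathrm{id}-\projM\bigr)\bigl(G-\varPsi_{f,X}(Z)\bigr)$, so that directly $R=\projM\bigl(G-\varPsi_{f,X}(Z)\bigr)=\projM(G)-\projM\varPsi_{f,X}\projM(Z)$, which vanishes by the Newton equation. Both routes rest on the same two ingredients---the operator-valued projection formula~\eqref{eq:Du_orth_proj} and the Newton equation~\eqref{eq:Newtoneq_2}---but the paper's one-line substitution makes $R$ manifestly a tangent vector, so the separate $\D F_X$ check you perform is absorbed into the algebra. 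Your version is a step longer but perfectly sound, and it has the merit of making the underlying decomposition $\Rbnk=\TX\oplus\NXM$ explicit.
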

\begin{proof}
Let $Z\in\TX$ be a~solution to the Newton equation \eqref{eq:Newtoneq_2} and let~$\varOmega$ be as in \eqref{eq:solU}. Then using \eqref{eq:Du_orth_proj} with $\Mb_{X}=\Mb$, we obtain that 
\begin{align*}
	\varPsi_{f,X}(Z) + \Mb^{-1}\D F_X^{*}(\varOmega) 
	&= \big(\projM \varPsi_{f,X} \projM (Z) - \projM (G)\big) +  G =  G.
\end{align*}
Moreover, equation \eqref{eq:saddle_point_2} immediately follows from \eqref{eq:tangentspace_1}.

For the converse statement, employing again \eqref{eq:Du_orth_proj} and \eqref{eq:saddle_point_2}, we observe that  
\[
\projM (Z) =  Z - \Mb^{-1}\D F_X^*\big(\D F_X^{} \Mb^{-1}\D F_X^*\big)^{-1}\D F_X^{}(Z) =  Z
\]
which implies that $Z$ belongs to $\TX$. Further, rewriting equation \eqref{eq:saddle_point_1} as 
$\varPsi_{f,X}(\projM(Z)) + \Mb^{-1}\D F_X^*(\varOmega) =  G$ and letting $\projM$ act on both sides of this equation, we obtain by using $\projM \big(\Mb^{-1}DF_X^*(\varOmega)\big)=0$ that the Newton equation~\eqref{eq:Newtoneq_2} holds true. \qed
\end{proof}

Due to the complexity of the saddle point equation \eqref{eq:saddle_point}, 
an~explicit direct matrix solver is unavailable. For small to medium-sized problems, we therefore propose to vectorize the involved equations and to solve the resulting linear system. This remedy was also used for the Riemannian Newton method on the Stiefel manifold in \cite{Sato17}. For this purpose, we introduce the vectorization operators on the matrix spaces $\mathbb{R}^{2n\times 2k}$ and $\skewset(2k)$. Let $\mvec{Z}\in\mathbb{R}^{4nk}$ denote the column vector generated by vertically concatenating the columns of the matrix $Z\in\mathbb{R}^{2n\times 2k}$. Further, let \mbox{$\mveck{\varOmega}\in \mathbb{R}^{k(2k-1)}$} denote the column vector constructed by vertically concatenating the columns of the upper triangular part (excluding the diagonal) of the matrix $\varOmega\in \skewset(2k)$. The following proposition collects some useful properties of these vectorization operators and the Kronecker product. Their proofs  can be found in \cite[Chap.~4]{HornJ91}. 
\begin{proposition}\label{prop:vect}
	Let $Z\in \mathbb{R}^{2n\times 2k}$, $A\in \mathbb{R}^{2k\times 2n}$, $B\in\mathbb{R}^{2k\times 2k}$, $\varOmega\in \skewset(2k)$, and let the matrix $E_{ij}\in \R^{2n\times 2k}$ have entry $1$ in position $(i,j)$ and $0$ otherwise.
 Furthermore, let $P_{2n,2k} = \sum_{i=1}^{2n}\sum_{j=1}^{2k}E_{ij}\otimes E^T_{ij}\in\R^{4nk\times 4nk}$ be a permutation matrix, where $\otimes$ denotes the Kronecker product. Then we have
	\begin{enumerate}
		\item $\mvec{Z^T} = P_{2n,2k}\mvec{Z}$;
		\item $\mvec{AZB} = (B^T\otimes A)\mvec{Z}$;
		\item $(B^T\otimes A)P_{2n,2k}= P_{2k,2k}(A\otimes B^T)$; 
		\item $\mvec{\varOmega}=D_{2k}\mveck{\varOmega}$ and $\mveck{\varOmega}=\frac{1}{2}D_{2k}^T\mvec{\varOmega}$ with the duplication matrix $D_{2k}\in\mathbb{R}^{4k^2\times k(2k-1)}$ satisfying $D_{2k}^T=-D_{2k}^TP_{2k,2k}^{}$.
	\end{enumerate}
\end{proposition}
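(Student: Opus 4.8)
\textbf{Proof plan for Proposition~\ref{prop:vect}.}
The plan is to treat each of the four items as a standard identity relating the vectorization operators $\mathrm{vec}$, $\mathrm{veck}$ and the Kronecker product, and to indicate the calculation or reference for each. All four facts are classical and can be found in \cite[Chap.~4]{HornJ91}, so the proof is mostly a matter of recording the arguments in our notation; no deep obstacle is expected, the only care needed is the bookkeeping for the duplication matrix $D_{2k}$ restricted to skew-symmetric (rather than symmetric) matrices.

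First, for item~1, I would verify $\mathrm{vec}(Z^T)=P_{2n,2k}\mathrm{vec}(Z)$ by checking the action on the basis matrices $E_{ij}$: since $E_{ij}^T=E_{ji}\in\R^{2k\times 2n}$, the commutation (perfect-shuffle) matrix $P_{2n,2k}=\sum_{i,j}E_{ij}\otimes E_{ij}^T$ is exactly the matrix that sends $\mathrm{vec}(Z)$ to $\mathrm{vec}(Z^T)$; this is the defining property of the commutation matrix. For item~2, $\mathrm{vec}(AZB)=(B^T\otimes A)\mathrm{vec}(Z)$ is the fundamental Kronecker identity, again obtained by expanding $Z=\sum_{ij}z_{ij}E_{ij}$ and using $\mathrm{vec}(AE_{ij}B)=\mathrm{vec}\big((A e_i)(e_j^TB)\big)=(B^Te_j)\otimes(Ae_i)$. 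For item~3, the relation $(B^T\otimes A)P_{2n,2k}=P_{2k,2k}(A\otimes B^T)$ follows by combining items~1 and~2: apply both sides to $\mathrm{vec}(Z)$ and use $\mathrm{vec}(AZB)=P_{2k,2k}^{}\mathrm{vec}\big((AZB)^T\big)=P_{2k,2k}\mathrm{vec}(B^TZ^TA^T)=P_{2k,2k}(A\otimes B^T)\mathrm{vec}(Z^T)=P_{2k,2k}(A\otimes B^T)P_{2n,2k}\mathrm{vec}(Z)$, and then cancel $P_{2n,2k}$ using that it is a permutation (hence invertible); alternatively cite \cite{HornJ91} directly.

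Finally, for item~4 I would construct $D_{2k}$ explicitly: for $\varOmega\in\skewset(2k)$ the strictly-upper-triangular entries determine $\varOmega$ via $\varOmega=\sum_{i<j}\omega_{ij}(E_{ij}-E_{ji})$, so defining the columns of $D_{2k}$ to be $\mathrm{vec}(E_{ij}-E_{ji})$ for $i<j$ gives $\mathrm{vec}(\varOmega)=D_{2k}\mathrm{veck}(\varOmega)$. For the inverse relation, note that $\mathrm{vec}(E_{ij}-E_{ji})$ for $i<j$, $\mathrm{vec}(E_{ij}+E_{ji})$ for $i\le j$ form an orthogonal basis of $\R^{4k^2}$, and that $D_{2k}^T\mathrm{vec}(\varOmega)$ picks out $\langle E_{ij}-E_{ji},\varOmega\rangle=2\omega_{ij}$, whence $\mathrm{veck}(\varOmega)=\tfrac12 D_{2k}^T\mathrm{vec}(\varOmega)$. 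The identity $D_{2k}^T=-D_{2k}^TP_{2k,2k}$ is immediate from item~1 applied to each column: $P_{2k,2k}\mathrm{vec}(E_{ij}-E_{ji})=\mathrm{vec}(E_{ji}-E_{ij})=-\mathrm{vec}(E_{ij}-E_{ji})$, so each row of $D_{2k}^T$ is anti-invariant under right multiplication by $P_{2k,2k}$. The main (mild) obstacle is simply to be consistent about the ordering convention used in $\mathrm{veck}$ so that the column order of $D_{2k}$ matches; once fixed, all identities drop out, and I would in any case defer the routine verifications to \cite[Chap.~4]{HornJ91}.
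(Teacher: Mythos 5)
Your approach---citing \cite[Chap.~4]{HornJ91} and supplementing with short verifications---is essentially the same as the paper's, since the paper offers no proof beyond that citation; and your sketches for items~1, 2 and~4 are correct.

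For item~3, however, the concluding ``cancel $P_{2n,2k}$'' step is a non-sequitur. Your chain of equalities correctly establishes, for all $Z\in\R^{2n\times 2k}$,
\begin{equation*}
(B^T\otimes A)\mvec{Z}=\mvec{AZB}=P_{2k,2k}(A\otimes B^T)P_{2n,2k}\mvec{Z},
\end{equation*}
so that $(B^T\otimes A)=P_{2k,2k}(A\otimes B^T)P_{2n,2k}$. Moving $P_{2n,2k}$ to the other side gives $(B^T\otimes A)P_{2n,2k}^{-1}=P_{2k,2k}(A\otimes B^T)$. Since $P_{2n,2k}$ is a permutation matrix, $P_{2n,2k}^{-1}=P_{2n,2k}^T$, and that transpose is the commutation matrix $P_{2k,2n}$ with reversed index order, which is \emph{not} equal to $P_{2n,2k}$ unless $n=k$. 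Hence what your sketch actually proves is $(B^T\otimes A)P_{2k,2n}=P_{2k,2k}(A\otimes B^T)$, not the stated $(B^T\otimes A)P_{2n,2k}=P_{2k,2k}(A\otimes B^T)$; a quick check with $n=2$, $k=1$, $A=E_{11}^T$, $B=I_2$ shows the two expressions genuinely differ. Your derivation therefore exposes what looks like a misprint in item~3 of the proposition (the commutation matrix on the left should be $P_{2n,2k}^T=P_{2k,2n}$). This is harmless for the rest of the paper, which uses the proposition only through items~2 and~4 and the involutory $P_{2k,2k}$, but your cancellation step should be corrected accordingly rather than left as a loose appeal to invertibility.
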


Applying the corresponding vectorization operators to equations \eqref{eq:saddle_point} and exploiting the skew-symmetry of the matrices $\varOmega$ and $\D F_{X}(Z)$, we can reformulate the saddle point problem~\eqref{eq:saddle_point} by using Proposition~\ref{prop:vect} as the linear system in $\mathbb{R}^{4nk+k(2k-1)}$ given by 
\begin{equation}\label{eq:LinSys}
	\begin{bmatrix}
		A &\enskip B\\
		C &\enskip 0
	\end{bmatrix}\begin{bmatrix}
		z\\
		\omega
	\end{bmatrix} = \begin{bmatrix}
		g\\
		0
	\end{bmatrix},
\end{equation}
where $z=\mvec{Z}$, $\omega=\mveck{\varOmega}$, $g=\mvec{G}$, and
\begin{align*}
	A &= I_{2k}\otimes \Mb^{-1}\nabla^2\bar{f}(X)-  \varOmega_{X,\Mb^{-1}\nabla\bar{f}}^T\otimes \Mb^{-1}J_{2n},\\
	B & = -2\big(I_{2k}\otimes(\Mb^{-1}J_{2n}X)\big)D_{2k},\\
	C & = \frac{1}{2}D_{2k}^T\big(P_{2k,2k}-I_{4k^2}\big)\big(I_{2k}\otimes(J_{2n}X)^T\big) 
 = -D_{2k}^T\big(I_{2k}\otimes(J_{2n}X)^T\big).
\end{align*}
If the matrices $A$ and $CA^{-1}B$ are both nonsingular, then 
\eqref{eq:LinSys} is uniquely solvable and the solution $z$ of system~\eqref{eq:LinSys} has the expression
\[
z = A^{-1}g - A^{-1}B(CA^{-1}B)^{-1}CA^{-1}g,
\]
see \cite[Sect.~3.3]{BenzGL05}. For $k \ll n$, the most expensive part in computing this solution is solving $k(2k-1)+1$ linear systems with the matrix $A$ of size $4nk\times 4nk$.

Although the presented approach is applicable to small and medium-sized problems only, it provides a~closed-form solution, which can be adopted as a~reference solution for inexact methods which will be discussed next.

\begin{remark}
For a dense weighted matrix $\Mb$, the inverse in \eqref{eq:saddle_point} might be computationally expensive. One way to circumvent this is as follows. Multiplying \eqref{eq:saddle_point_1} with $M$ and \eqref{eq:saddle_point_2} with $2$ yields
\begin{subequations}
	\begin{alignat*}{3}
		& \Mb\varPsi_{f,X}(Z) + \D F_X^*(\varOmega) &\,= \Mb G,\\ \label{eq:saddle_point_2M}
		& 2\,\D F_X^{}(Z) &=0.\quad\enskip
	\end{alignat*}
\end{subequations}
Similarly, for the vectorized system~\eqref{eq:LinSys}, if we write $g=\mvec{\Mb G}$,
\begin{align*}
	A &= I_{2k}\otimes \nabla^2\bar{f}(X)-  \varOmega_{X,\Mb^{-1}\nabla\bar{f}}^T\otimes J_{2n},\\
	B & = -2\big(I_{2k}\otimes(J_{2n}X)\big)D_{2k},\\
	C & = D_{2k}^T\big(P_{2k,2k}-I_{4k^2}\big)\big(I_{2k}\otimes(J_{2n}X)^T\big) 
        = -2 D_{2k}^T\big(I_{2k}\otimes(J_{2n}X)^T\big) = B^T,
\end{align*}
then we get again $z = - A^{-1}B(B^TA^{-1}B)^{-1}B^TA^{-1}g + A^{-1}g$. 
\end{remark}

%
\subsection{Riemannian inexact Newton method}
An alternative and more commonly used approach for solving the Newton equation~\eqref{eq:Newtoneq_general} is to employ an~iterative method such as the conjugate gradient (CG) or minimal residual (MINRES) method, e.g., \cite{Saad96}. Note that CG requires the property of symmetry and positive definiteness while MINRES only needs the symmetry of the coefficient matrix or operator. 

In these methods, starting with an~initial guess $Z_j^{(0)}\in \TX$, a sequence $\{Z_j^{(i)}\}_i$ of approximate solutions to \eqref{eq:Newtoneq_general} is generated, which requires only the act of the Riemannian Hessian on a tangent vector and linear operations, and terminates once the stopping criterion 
\[	
	\big\|\Hess{X_j}{Z_j^{(i)}} + \grad f(X_j) \big\|_{\Mb_{X_j}} 
        \leq  \eta_j\,\big\|\grad f(X_j)\big\|_{\Mb_{X_j}}
\] 
is fulfilled for some $\eta_j\in (0,1)$. For the convergence reasons, this so-called forcing term is chosen as 
\[
    \eta_j = \min\big\{\eta, \|\grad f(X_j)\|^\mu_{\Mb_{X_j}}\!\big\}
\]
with some fixed constants \mbox{$\eta \in (0,1)$} and $\mu>0$. We summarize the inexact version of the Riemannian Newton method in Algorithm~\ref{alg:Newton_inexact}. 
		
\begin{algorithm}[ht]
		\caption{Riemannian inexact Newton method (RiN)}
		\label{alg:Newton_inexact}
	\begin{algorithmic}[1]
			\Require Starting point $X_0\in\Spkn$, 
			parameters for inexact solver $\eta\in (0,1)$, $ \mu >0$, maximal number of iterations \texttt{mxit} and the stopping criterion \texttt{tol}. Set $j=0$.
			\While{$\|\grad f(X_j)\|_{\Mb_{X_j}} >$ \texttt{tol} and $j <$ \texttt{mxit}}
			\State  Solve the Newton equation \eqref{eq:Newtoneq_general} approximately for $\tilde{Z}_j\in\TX$ until the 
   
   following condition is satisfied:
   \[
			\quad\big\|\Hess{X_j}{\tilde{Z}_j} + \grad f(X_j) \big\|_{\Mb_{X_j}} \!\!\leq \min\big\{\eta,\big\|\grad f(X_j)\big\|_{\Mb_{X_j}}^\mu\!\!\big\}\big\|\grad f(X_j)\big\|_{\Mb_{X_j}}\!. \vspace*{-2mm}
    \]
			\State Update $X_{j+1} = \calR_{X_j}(\tilde{Z}_j)$. 
   		\State Set $j=j+1$.
			\EndWhile
	\end{algorithmic}
\end{algorithm}
	
In the weighted Euclidean metric case, one can alternatively apply an~iterative method to the saddle point problem~\eqref{eq:saddle_point} which can be considered as a~linear system on \mbox{$\R^{2n\times 2k}\times \skewset(2k)$}. 
This problem, however, has a~larger dimension than the Newton equation \eqref{eq:Newtoneq}. Moreover, in our numerical experiments, we observed that the numerical solution of \eqref{eq:saddle_point} poses some stability issues leading to less accurate results compared to that obtained by solving the Newton equation  \eqref{eq:Newtoneq}. Therefore, the formulation \eqref{eq:Newtoneq} (or more general \eqref{eq:Newtoneq_general}) will be used whenever an~iterative solver for the Newton equation is invoked.

\subsection{Hybrid Riemannian Newton method} 
\label{sec:RNewton_meth}
The Riemannian Newton method, as presented in Algorithms~\ref{alg:Newton_plain} and \ref{alg:Newton_inexact}, does not ensure a convergence without a requirement that the initial guess is close enough to a~solution. 

The so-called \emph{globalized approach} as presented in \cite{ZhaoBJ15,ZhaoBJ18,BortFFY20,BortFF22,XuNgB22} remedies this fact by replacing any unsatisfactory Newton direction by a~gradient descent one and, in addition, by controlling the step length to enable a~decrease in the value of the cost function. That is, at every iterate, the Riemannian Hessian must be computed and Newton's equation must be solved. Both theory and practice show that if the initial guess is not close enough to a solution, gradient descent is usually needed. Accordingly, the computation of the Riemannian Hessian and the solution to Newton's equation are usually unnecessary in the early phase of the iteration which most probably slows down the optimization process. Moreover, in the late phase when the iterate is close to a solution, the step size control which requires retracting the search direction and calculating the cost function is unjustified due to the fact that the Newton step length in this phase tends to be unit, see, e.g., \cite[Lem.~4.5]{ZhaoBJ15}, \cite[Lem.~5]{ZhaoBJ18}, \cite[proof of Thm.~4.2]{BortFF22}.

In view of these, we employ a \emph{hybrid method}  \cite{SatoI13,IzmaS14} which is composed of two phases. In the first phase, the search is started with gradient-based steps until a~switching condition is fulfilled, e.g., the norm of the Riemannian gradient is smaller than a~prescribed constant. For this purpose, we can resort to the RGD method proposed in~\cite[Alg.~5.1]{GSAS21} but with slight adaptation to the metric $g_{\Mb_{X}}$.
The goal of the first phase is to bring the iterate into a small enough neighborhood of a~critical point. Once it is accomplished, the second phase is activated, in which Newton's steps are performed as presented in Algorithm~\ref{alg:Newton_plain} or Algorithm~\ref{alg:Newton_inexact} to speed up the local convergence. Details of the hybrid Riemannian Newton method are given in Algorithm~\ref{alg:Newton_hybrid}. 
\begin{algorithm}[htbp]
		\caption{Hybrid Riemannian Newton method (hRN)}
		\begin{algorithmic}[1]
			\Require Starting point $X_0\in\Spkn$, switching parameter $\theta \in (0,1)$, maximal number of iterations \texttt{mxit} and the stopping criterion $\texttt{tol}<\theta$.
                \State Run the RGD method with respect to $g_{\Mb_{X}}\!$ 
                 and obtain ${X}_{j_*}$.
                 \Comment{First phase: RGD}	
                \If{$\|\grad f({X}_{j_*})\|_{\Mb_{{X}_{j_*}}} \leq \theta$}
                \State \hspace*{-3mm}Run Algorithm~\ref{alg:Newton_plain} or Algorithm~\ref{alg:Newton_inexact} starting with ${X}_{j_*}$. \Comment{Second~phase:~Newton}
			\Else
			\State  \hspace*{-3mm}Release error, increase $\theta$ (or increase \texttt{mxit} for RGD).
			\EndIf
		\end{algorithmic}
		\label{alg:Newton_hybrid}
\end{algorithm}
 
\section{Convergence analysis}
 \label{sec:Convergence}
 This section is devoted to the convergence analysis of the proposed algorithms. First, we study the local convergence properties of Algorithms~\ref{alg:Newton_plain} and \ref{alg:Newton_inexact} and then, plugging them in Algorithm~\ref{alg:Newton_hybrid}, we consider the global behavior. 
 
\subsection{Local convergence}
The local quadratic convergence rate of Algorithm~\ref{alg:Newton_plain} can be derived from \cite[Thm.~6.3.2]{AbsiMS08} or \cite[Thm.~6.7]{Boumal23} as it is a~special case of the Riemannian Newton method for general Riemannian manifolds. The convergence analysis of the Riemannian inexact Newton method is, however, more involved. To this end, we first collect some facts related to exponential mapping from \cite{doCarmo92,FerrS02,FernFY17,ZhuS20}. By extending them to general retractions, we establish the superlinear convergence of the inexact Newton method. For the sake of convenience, we state all results on the symplectic Stiefel manifold, although they hold for a~general Riemannian ma\-ni\-fold. In addition, we simplify the notation for a~general norm $\|\cdot\|_{\Mb_X}$ by $\|\cdot\|_{X}$ if there is no ambiguity.

\paragraph{Exponential mapping}
The {\em exponential mapping}  $\exp_X$ is defined as
\[
\arraycolsep=2pt
\begin{array}{rccc}
\exp_X\;: \;\;&\TX&\longrightarrow &\Spkn\\ 
&Z&\longmapsto&  \gamma(1),
\end{array}
\]
where $\gamma(t)$ is the (unique) geodesic which satisfies the conditions $\gamma(0) =X$ and $\dot{\gamma}(0) = Z\in \TX$. In general, the exponential map is defined only on a~neighborhood of the origin \mbox{$0_X\in \TX$}, e.g., \cite[Prop.~2.7 of Chap.~3]{doCarmo92}. Let us introduce the (\emph{exponential}) \emph{injectivity radius} given by
\begin{equation*}
r^{\exp}_X = \sup\big\{r > 0\ :\ \exp_X\big|_{B_r(0_X)} \mbox{ is diffeomorphic}\big\}.
\end{equation*}
where $B_r(0_X):=\{Z\in\TX : \|Z\|_X<r\}$. 
For any $r<r^{\exp}_X$, the set $\exp_X(B_r(0_X))\subset \Spkn$ is then referred to as a~\emph{normal ball} or the \emph{geodesic ball} due to the fact that $\exp_X(B_r(0_X)) = \{Y\in\Spkn : \dist(X,Y)<r\}$, where $\dist(X,Y)$ denotes the Riemannian distance between $X$ and $Y$ on $\Spkn$. A~normal ball is a~specific case of a~\emph{normal neighborhood} (NN), 
the diffeomorphic image of a~star-shaped neighborhood of $0_X$ in $\TX$ under $\exp_X$. In a~NN of $X$, $\exp_X^{-1}(Y)$ is well-defined but $\exp_Y^{-1}(X)$ might not. A~\emph{totally normal neighborhood} (TNN) of $X$ excludes such cases by ensuring that it is the NN of all of its points, see \cite[Thm.~3.7 of Chap.~3]{doCarmo92} for the existence of such a~neighborhood. This theorem also implies that given any two points $X$ and $Y$ in a~TNN of some $\tilde{X}\in\Spkn$, the inverse exponential map is well-defined and the curve $\gamma_{XY}(t):=\exp_X\big(t\exp^{-1}_X(Y)\big)$, $t\in [0,1]$, is the unique geodesic connecting $X$ and~$Y$. Such a~geodesic allows us to define a~parallel transport from $X$ to $Y$ in an~appropriate way, see \cite[Chap.~2]{doCarmo92} for a~precise definition. The parallel transport along the curve $\gamma_{XY}$, denoted by ${\calT}^{\,\|}_{XY}$, is uniquely defined and isometric. 
Moreover, it follows from \cite[(18)]{ZhuS20} that 
\begin{equation}\label{eq:TransInvExp}
    -\calT^{\,\|}_{YX}\exp^{-1}_{Y}(X) = \exp^{-1}_{X}(Y).
\end{equation}
From now on, arguments related to parallel transport $\calT^{\,\|}_{XY}$ always go with the assumption that $X$ and $Y$ are in a TNN of some $\tilde{X}\in\Spkn$ which immediately implies that $\dist(X,Y)<\min\big\{r_X^{\exp},r_Y^{\exp}\big\}$.

Parallel transports help to relate geometric objects associated with different tangent spaces. Indeed, if $f$ is twice continuously differentiable on $\Spkn$, then it has been shown in \cite[Lem.~3.2]{FernFY17} that 
\begin{equation}\label{eq:PhessP}
			\lim_{X\to \tilde{X}}\big\|{\calT}^{\,\|}_{X\tilde{X}} \hess f(X){\calT}^{\,\|}_{\tilde{X}X} - \hess f(\tilde{X})\big\|_{\tilde{X}} = 0.
\end{equation}
If $\hess f(\tilde{X})$ is additionally nonsingular, then there exists a~constant $\tilde{r} \in (0,r^{\exp}_{\tilde{X}})$ such that for all $X\in\Spkn$ with $\dist(X,\tilde{X})< \tilde{r}$, $\hess f(X)$ is nonsingular and 
\begin{equation}\label{eq:hessnonsing}
		\big\|\hess f(X)^{-1}\big\|_{X} \leq 2\big\|\hess f(\tilde{X})^{-1}\big\|_{\tilde{X}}.
\end{equation}
Moreover, \cite[Lem.~2.3]{FerrS02} implies that the first-order approximation formula 
\begin{equation}\label{eq:Taylor_VectorField}
		\grad f(X) = {\calT}^{\,\|}_{\tilde{X}X}\grad f(\tilde{X}) 
        + {\calT}^{\,\|}_{\tilde{X}X}\Hess{\tilde{X}}{\exp^{-1}_{\tilde{X}}(X)} 
        + o\big(\|\exp^{-1}_{\tilde{X}}(X)\|_{\tilde{X}}\big)
\end{equation}
holds for any $X\in\Spkn$ close enough to $\tilde{X}$.

\paragraph{Retraction-related results}
We consider now a~retraction $\calR$ on $\Spkn$ which approximates the exponential mapping to the first order.  By definition, any retraction is a~local diffeomorphism. The positive number
\[
	r_X := \sup\big\{r > 0\ :\ \calR_X\big|_{B_r(0_X)} \text{ is diffeomorphic}\big\}
\]
is called the~\emph{injectivity radius} of the retraction~$\mathcal{R}$ at $X \in \Spkn$. It follows from \cite[Lem.~2]{HuangAG2015} that for any \mbox{$\tilde{X}\in \Spkn$}, there exist positive constants $c_1$, $c_2$, $\delta^{\tan}_{\tilde{X}}$, and $\delta_{\tilde{X}}$ such that for all $X\!\in \Spkn$ satisfying  
\mbox{$\dist(X,\tilde{X})\!<\delta_{\tilde{X}}$} and all $Z_j\in \TX$ with $\|Z_j\|_X\leq \delta^{\tan}_{\tilde{X}}$, $j=1,2$, it holds that 
\begin{equation}\label{eq:DistRetr_Estimate}
c_1\|Z_1-Z_2\|_{X} \leq \dist(\calR_X(Z_1),\calR_X(Z_2)) \leq c_2\|Z_1-Z_2\|_{X}.
\end{equation}
Setting $Z_1=Z$ and $Z_2 = 0_{X}$ in the above estimates, we immediately obtain
\begin{equation}\label{eq:DistRetr_Estimate_1}
			c_1\|Z\|_{X} \leq \dist(\calR_X(Z),X) \leq c_2\|Z\|_{X}.
\end{equation}
Furthermore, without loss of generality, one can assume that $\delta^{\tan}_{\tilde{X}}$ in \eqref{eq:DistRetr_Estimate_1} is smaller than the injectivity radius $r_X$, as \eqref{eq:DistRetr_Estimate_1} holds for any other constant smaller than $\delta^{\tan}_{\tilde{X}}$. Then \eqref{eq:DistRetr_Estimate_1} yields
\begin{equation}\label{eq:DistRetr_Estimate_2}
c_1\big\|\calR^{-1}_X(Y)\big\|_{X} \leq \dist(Y,X) \leq c_2\big\|\calR^{-1}_X(Y)\big\|_{X}
\end{equation}
for any $Y\in \Spkn$ such that $\dist(Y,X)<\delta$ with $\delta < \delta_{\tilde{X}}$. 
The meaning of~\eqref{eq:DistRetr_Estimate_2} is that if $X$ and $Y$ are close enough to each other, the Riemannian distance between them and the norm of pre-image of $Y$ under the retraction at $X$ are of the same order which turns out to be helpful later. Estimate \eqref{eq:DistRetr_Estimate} allows us also to define a~quantity \cite{DediPM03,FernFY17,BortFF22} 
\begin{equation}\label{eq:Retr_LipschitzConst}
K_{\calR,\tilde{X}}:= \sup\left\{\frac{\dist(\calR_X(Z_1),\calR_X(Z_2))}{\|Z_1-Z_2\|_{X}}\ :\ \dist(X,\tilde{X})<\delta_{\tilde{X}},\ Z_1\!\not=\! Z_2\in\! B_{\delta^{tan}_{\tilde{X}}}(0_X) \right\},
\end{equation}
where $\delta_{\tilde{X}}$ and $\delta^{\tan}_{\tilde{X}}$ are given in \eqref{eq:DistRetr_Estimate} which in turn implies that $K_{\calR,\tilde{X}}$ is positive and finite. 

Similarly to the exponential mapping, for a given retraction $\calR$, we define at each point the \emph{retractive neighborhood} (RtrN) and the \emph{totally retractive neighborhood} (TRtrN). For any two points $X$ and $Y$ in a~TRtrN of some
$\tilde{X}\in\Spkn$, the inverse retraction $\calR_X^{-1}(Y) \in \TX$ is well-defined. By definition, the intersection of an~RtrN (or a~TRtrN)  at $X$ with its normal counterpart is always nonempty and open since it is the nonempty intersection (containing $X$ at least) of two open subsets of $\Spkn$. Because the retraction is designed to approximate the exponential map, it would be curious to see how the inverse retraction approximates the inverse exponential. As shown in \cite[Lem.~3]{ZhuS20}, there exists a~constant $c>0$, depending on $\tilde{X}\in\Spkn$, such that for all $X$ and~$Y$ belonging to a~compact subset of the intersection of the TNN and TRtrN of~$\tilde{X}$, it holds that
\begin{equation}\label{eq:ApproxInvRetr}
\big\|\calR_X^{-1}(Y) - \exp_X^{-1}(Y)\big\|_X \leq c\,\dist(X,Y)^2.
\end{equation}
Based on \eqref{eq:ApproxInvRetr}, we establish the counterpart of the equality \eqref{eq:TransInvExp} for a~general retraction.

\begin{lemma}\label{lem:TransInvRetr}
For any  $X$ and $Y$ in a~compact subset of the intersection of a~TNN and a~TRtrN
of $\tilde{X} \in \Spkn$, it holds that
\begin{equation}\label{eq:TransInvRetr}
\calR_Y^{-1}(X) = -{\calT}^{\,\|}_{XY}\calR_X^{-1}(Y) + O(\dist(X,Y)^2),
\end{equation}
where ${\calT}^{\,\|}_{XY}$ is the parallel transport along the geodesic connecting $X$ and $Y$.
\end{lemma}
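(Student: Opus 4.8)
The plan is to combine the two approximation facts already in hand: the exponential identity \eqref{eq:TransInvExp}, which says $-\calT^{\,\|}_{YX}\exp^{-1}_{Y}(X) = \exp^{-1}_{X}(Y)$, and the retraction-vs-exponential estimate \eqref{eq:ApproxInvRetr}, which controls $\|\calR_X^{-1}(Y) - \exp_X^{-1}(Y)\|_X$ by $c\,\dist(X,Y)^2$. The idea is that $\calR_Y^{-1}(X)$ differs from $\exp_Y^{-1}(X)$ by an $O(\dist(X,Y)^2)$ term, which in turn equals $-\calT^{\,\|}_{XY}\exp_X^{-1}(Y)$ exactly, and finally $\exp_X^{-1}(Y)$ differs from $\calR_X^{-1}(Y)$ by another $O(\dist(X,Y)^2)$ term; the parallel transport being an isometry lets us carry the error estimate across tangent spaces without inflating its order.

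First I would write, using \eqref{eq:ApproxInvRetr} applied at the base point $Y$ (with the roles of $X$ and $Y$ swapped, which is legitimate since both points lie in the stated compact set and $\dist(X,Y)=\dist(Y,X)$),
\[
\calR_Y^{-1}(X) = \exp_Y^{-1}(X) + O(\dist(X,Y)^2).
\]
Next, apply \eqref{eq:TransInvExp} to replace $\exp_Y^{-1}(X)$ by $-\calT^{\,\|}_{XY}\exp_X^{-1}(Y)$; here I should be a little careful about the labeling of the parallel transport, matching \eqref{eq:TransInvExp}'s convention $\calT^{\,\|}_{XY}$ for transport along the geodesic from $X$ to $Y$, so that $-\calT^{\,\|}_{YX}\exp^{-1}_{Y}(X)=\exp^{-1}_{X}(Y)$ rearranges to $\exp^{-1}_{Y}(X)=-\calT^{\,\|}_{XY}\exp^{-1}_{X}(Y)$ after invoking that $\calT^{\,\|}_{XY}$ and $\calT^{\,\|}_{YX}$ are mutually inverse isometries. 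Then apply \eqref{eq:ApproxInvRetr} once more, this time at base point $X$, to write $\exp_X^{-1}(Y) = \calR_X^{-1}(Y) + O(\dist(X,Y)^2)$, and push this through $\calT^{\,\|}_{XY}$. Because $\calT^{\,\|}_{XY}$ is an isometry (as recalled in the paragraph on parallel transports), it maps an $O(\dist(X,Y)^2)$ vector to an $O(\dist(X,Y)^2)$ vector of the same norm, so the error term survives unchanged in order. Collecting the three equalities gives exactly \eqref{eq:TransInvRetr}.

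The only genuine subtlety—hardly an obstacle but worth flagging—is making sure all the invoked estimates are valid simultaneously: \eqref{eq:ApproxInvRetr} requires $X,Y$ to lie in a compact subset of the intersection of the TNN and TRtrN of $\tilde X$, and \eqref{eq:TransInvExp} requires $X,Y$ to lie in a common TNN so the connecting geodesic and its parallel transport are well-defined; both are guaranteed by the hypothesis of the lemma, and the constant $c$ in \eqref{eq:ApproxInvRetr} is uniform over that compact set, so the two $O(\cdot)$ terms at base points $X$ and $Y$ can be bundled into a single $O(\dist(X,Y)^2)$ with a constant depending only on $\tilde X$. I would close by noting that the isometry property is what prevents the transported error from degrading, which is the one place the geometry (rather than pure bookkeeping) is used.
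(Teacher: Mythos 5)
Your proof is correct and follows essentially the same route as the paper: express both $\calR_Y^{-1}(X)$ and $\calR_X^{-1}(Y)$ as their exponential counterparts plus $O(\dist(X,Y)^2)$ remainders via \eqref{eq:ApproxInvRetr}, link the two exponential inverses by \eqref{eq:TransInvExp}, and absorb the transported error using the isometry of $\calT^{\,\|}_{XY}$. The paper simply names the error terms ($RL_XY$, $RL_YX$) explicitly rather than writing $O(\cdot)$ inline, but the argument is the same.
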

\begin{proof}
Using \eqref{eq:ApproxInvRetr}, we can write
\begin{align}
    \calR_X^{-1}(Y) & = \exp_X^{-1}(Y) + RL_XY, \label{eq:TransInvRetr_proof1} \\
    \calR_Y^{-1}(X) & = \exp_Y^{-1}(X) + RL_YX, \label{eq:TransInvRetr_proof2}
\end{align}
with 
\begin{align}
RL_XY\in \romT_X\Spkn,  
    & \quad \|RL_XY\|_X = O\big(\dist(X,Y)^2\big), \label{eq:RLX}\\
RL_YX\in \romT_Y\Spkn,  
    & \quad \|RL_YX\|_Y = O\big(\dist(X,Y)^2\big). \label{eq:RLY}
\end{align}
The relations \eqref{eq:TransInvRetr_proof1} and \eqref{eq:TransInvRetr_proof2} together with \eqref{eq:TransInvExp} lead to
\[
\calR_Y^{-1}(X) = - \calT^{\,\|}_{XY}\exp_X^{-1}(Y) + RL_YX = -\calT^{\,\|}_{XY}\big(\calR_X^{-1}(Y) - RL_XY\big) + RL_YX.
\]
Then \eqref{eq:TransInvRetr} is obtained by using \eqref{eq:RLX}, \eqref{eq:RLY}, and the isometry of $\calT^{\,\|}_{XY}$.\qed
\end{proof}

Next, we derive the first-order approximation similar to
\eqref{eq:Taylor_VectorField} for an~inverse retraction.

\begin{lemma}
If $X$ belongs to a~compact subset of the intersection of a~TNN and a~TRtrN of $\tilde{X} \in \Spkn$, then it holds that
\begin{equation}\label{eq:Taylor_VectorField_Retr}
	\grad f(X) = {\calT}^{\,\|}_{\tilde{X}X}\grad f(\tilde{X}) 
        + {\calT}^{\,\|}_{\tilde{X}X}\Hess{\tilde{X}}{\calR^{-1}_{\tilde{X}}(X)} 
        + o\big(\big\|\calR^{-1}_{\tilde{X}}(X)\big\|_{\tilde{X}}\big).
		\end{equation}
\end{lemma}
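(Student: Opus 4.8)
The plan is to bootstrap from the first-order approximation \eqref{eq:Taylor_VectorField}, already established for the inverse exponential map, by substituting $\calR^{-1}_{\tilde X}$ for $\exp^{-1}_{\tilde X}$ and controlling the discrepancy via \eqref{eq:ApproxInvRetr}. First I would note that, since $X$ lies in a~compact subset of the intersection of a~TNN and a~TRtrN of $\tilde X$, both $\exp^{-1}_{\tilde X}(X)$ and $\calR^{-1}_{\tilde X}(X)$ are well-defined elements of $\romT_{\tilde X}\Spkn$, and \eqref{eq:ApproxInvRetr} applied with the roles $X\leftarrow\tilde X$, $Y\leftarrow X$ gives
\[
\calR^{-1}_{\tilde X}(X) = \exp^{-1}_{\tilde X}(X) + E, \qquad \|E\|_{\tilde X} \leq c\,\dist(\tilde X, X)^2 .
\]

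Second, I would convert this distance bound into one phrased in terms of $\|\calR^{-1}_{\tilde X}(X)\|_{\tilde X}$ using the two-sided estimate \eqref{eq:DistRetr_Estimate_2}: from $\dist(\tilde X, X)\leq c_2\|\calR^{-1}_{\tilde X}(X)\|_{\tilde X}$ we get $\|E\|_{\tilde X} = O\big(\|\calR^{-1}_{\tilde X}(X)\|_{\tilde X}^2\big)$, and in particular $\|\exp^{-1}_{\tilde X}(X)\|_{\tilde X} = \|\calR^{-1}_{\tilde X}(X)\|_{\tilde X} + O\big(\|\calR^{-1}_{\tilde X}(X)\|_{\tilde X}^2\big)$. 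Since $\|\calR^{-1}_{\tilde X}(X)\|_{\tilde X}\to 0$ as $X\to\tilde X$ by \eqref{eq:DistRetr_Estimate_1}, the two remainder symbols $o\big(\|\exp^{-1}_{\tilde X}(X)\|_{\tilde X}\big)$ and $o\big(\|\calR^{-1}_{\tilde X}(X)\|_{\tilde X}\big)$ are then interchangeable.

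Third, I would substitute $\exp^{-1}_{\tilde X}(X) = \calR^{-1}_{\tilde X}(X) - E$ into \eqref{eq:Taylor_VectorField}, expand the Hessian term by linearity, and estimate the extra contribution by
\[
\big\|\calT^{\,\|}_{\tilde X X}\Hess{\tilde X}{E}\big\|_X = \big\|\Hess{\tilde X}{E}\big\|_{\tilde X} \leq \big\|\hess f(\tilde X)\big\|_{\tilde X}\,\|E\|_{\tilde X} = O\big(\|\calR^{-1}_{\tilde X}(X)\|_{\tilde X}^2\big),
\]
where I use the isometry of the parallel transport $\calT^{\,\|}_{\tilde X X}$ and the boundedness of the linear operator $\hess f(\tilde X)$ on the finite-dimensional space $\romT_{\tilde X}\Spkn$. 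Absorbing this term together with the converted remainder from the second step into $o\big(\|\calR^{-1}_{\tilde X}(X)\|_{\tilde X}\big)$ yields \eqref{eq:Taylor_VectorField_Retr}.

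I do not expect a~genuine obstacle: the argument is a~short perturbation of \eqref{eq:Taylor_VectorField}. The only points requiring care are bookkeeping of neighborhoods---ensuring that the compact set on which $X$ ranges is simultaneously inside the domains of validity of \eqref{eq:Taylor_VectorField}, \eqref{eq:ApproxInvRetr}, \eqref{eq:DistRetr_Estimate_1}, and \eqref{eq:DistRetr_Estimate_2}, which is achieved by shrinking the radii $\tilde r$, $\delta_{\tilde X}$, $\delta^{\tan}_{\tilde X}$ as needed---and the passage between the two $o(\cdot)$ scales, which is exactly what the second step provides.
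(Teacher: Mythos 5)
Your proof is correct and follows essentially the same route as the paper's: substitute $\exp^{-1}_{\tilde X}(X)=\calR^{-1}_{\tilde X}(X)-E$ with $\|E\|_{\tilde X}=O\big(\dist(\tilde X,X)^2\big)$ from \eqref{eq:ApproxInvRetr} into \eqref{eq:Taylor_VectorField}, bound the extra Hessian term using isometry of the parallel transport, and convert the remainder from the $\exp^{-1}$ scale to the $\calR^{-1}$ scale via \eqref{eq:DistRetr_Estimate_2}. Your second step, checking that $\|\exp^{-1}_{\tilde X}(X)\|_{\tilde X}$ and $\|\calR^{-1}_{\tilde X}(X)\|_{\tilde X}$ agree to first order so the two $o(\cdot)$ symbols are interchangeable, is exactly the content the paper compresses into its last sentence.
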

\begin{proof}
In view of \eqref{eq:ApproxInvRetr} and \eqref{eq:Taylor_VectorField}, we obtain that
\begin{align*}
    \grad f(X) &= {\calT}^{\,\|}_{\tilde{X}X}\grad f(\tilde{X}) 
        + {\calT}^{\,\|}_{\tilde{X}X}\Hess {\tilde{X}}{\calR^{-1}_{\tilde{X}}(X)}\\
    &\quad - {\calT}^{\,\|}_{\tilde{X}X}\Hess{\tilde{X}}{RL_{\tilde{X}}(X)} 
        + o\big(\big\|\exp_{\tilde{X}}^{-1}(X)\big\|_{\tilde{X}}\big).
\end{align*}
Then \eqref{eq:Taylor_VectorField_Retr} holds due to the isometry of ${\calT}^{\,\|}_{\tilde{X}X}$, 
$\|RL_{\tilde{X}}(X)\|_{\tilde{X}} = O\big(\dist(\tilde{X},X)^2\big)$,
$\|\exp_{\tilde{X}}^{-1}(X)\|_{\tilde{X}} = \dist(\tilde{X},X)$, and \eqref{eq:DistRetr_Estimate_2}. \qed
\end{proof}

The next lemma can be viewed as an extension of~\cite[Lem.~3.1]{DembES1982} and~\cite [Lem.~7.4.8]{AbsiMS08}.
\begin{lemma}\label{lem:grad_dist}
Let $X_*\in\Spkn$ be such that $\grad f(X_*)=0$ and the Riemannian Hessian $\hess f(X_*)$ is nonsingular. Then, there are positive constants $c_3$, $c_4$, and $\delta_g$ such that for all \mbox{$X\in \Spkn$} with $\dist(X,X_*)<\delta_g$, it holds that
\begin{equation}\label{eq:Estimate_grad_dist}
c_3\,\dist(X,X_*) \leq \|\grad f(X)\|_{X} \leq c_4\,\dist(X,X_*).
\end{equation}
\end{lemma}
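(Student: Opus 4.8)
The plan is to read off the estimate from the first-order approximation of the gradient vector field along geodesics, equation~\eqref{eq:Taylor_VectorField}, taken with base point $\tilde X = X_*$. First I would fix $\delta_g \in (0, r^{\exp}_{X_*})$ so small that $\exp^{-1}_{X_*}$ is a diffeomorphism on the geodesic ball of radius $\delta_g$ and \eqref{eq:Taylor_VectorField} is valid there, and for $X\in\Spkn$ with $\dist(X,X_*)<\delta_g$ set $v := \exp^{-1}_{X_*}(X) \in \romT_{X_*}\Spkn$, so that $\|v\|_{X_*} = \dist(X,X_*)$. Since $\grad f(X_*)=0$, \eqref{eq:Taylor_VectorField} reduces to $\grad f(X) = {\calT}^{\,\|}_{X_*X}\bigl(\Hess{X_*}{v} + r(v)\bigr)$ with $\|r(v)\|_{X_*} = o(\|v\|_{X_*})$, and by the isometry of parallel transport this gives $\|\grad f(X)\|_{X} = \|\Hess{X_*}{v} + r(v)\|_{X_*}$.

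Next I would use the hypothesis that $\hess f(X_*)$ is an invertible linear operator on the finite-dimensional inner product space $(\romT_{X_*}\Spkn,\|\cdot\|_{X_*})$: this yields constants $0 < a \le b$, concretely $a = \|\hess f(X_*)^{-1}\|_{X_*}^{-1}$ and $b = \|\hess f(X_*)\|_{X_*}$, such that $a\|v\|_{X_*} \le \|\Hess{X_*}{v}\|_{X_*} \le b\|v\|_{X_*}$ for every $v\in\romT_{X_*}\Spkn$ (the lower bound being exactly where nonsingularity enters). Shrinking $\delta_g$ further if necessary, the definition of the $o(\cdot)$ term with tolerance $a/2$ lets me assume $\|r(v)\|_{X_*} \le \tfrac{a}{2}\|v\|_{X_*}$ whenever $\dist(X,X_*)<\delta_g$. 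Two triangle-inequality estimates then give
\[
\tfrac{a}{2}\|v\|_{X_*} \;\le\; \|\grad f(X)\|_{X} \;\le\; \bigl(b+\tfrac{a}{2}\bigr)\|v\|_{X_*},
\]
and substituting $\|v\|_{X_*} = \dist(X,X_*)$ proves \eqref{eq:Estimate_grad_dist} with $c_3 = a/2$ and $c_4 = b + a/2$.

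There is no genuine obstacle here; the statement is a routine consequence of \eqref{eq:Taylor_VectorField} and the invertibility of $\hess f(X_*)$. The only points requiring a little care are (i) choosing $\delta_g$ small enough that $v$ is well-defined, that $\|v\|_{X_*} = \dist(X,X_*)$, and that \eqref{eq:Taylor_VectorField} applies, and (ii) being explicit that the left inequality in \eqref{eq:Estimate_grad_dist} fails without the nonsingularity assumption. If one wishes to bypass the exponential map altogether, the identical argument runs through using the inverse-retraction Taylor formula \eqref{eq:Taylor_VectorField_Retr} in place of \eqref{eq:Taylor_VectorField}, together with the two-sided comparison \eqref{eq:DistRetr_Estimate_2} between $\|\calR^{-1}_{X_*}(X)\|_{X_*}$ and $\dist(X,X_*)$.
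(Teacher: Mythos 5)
Your proof is correct, and it follows the same overall scheme as the paper's (isolate the $o$-term from the first-order Taylor expansion of the gradient field, use nonsingularity of $\hess f(X_*)$ to bound $\Hess{X_*}{\cdot}$ from below, shrink $\delta_g$ so the $o$-term is dominated by half the lower bound, then apply the triangle inequality in both directions). The one genuine difference is the choice of chart: you work directly with the exponential map and \eqref{eq:Taylor_VectorField}, so that $\|v\|_{X_*} = \|\exp^{-1}_{X_*}(X)\|_{X_*} = \dist(X,X_*)$ exactly and the estimate comes out in terms of $\dist(X,X_*)$ with no further conversion; the paper instead works with the retraction via \eqref{eq:Taylor_VectorField_Retr}, so it first obtains two-sided bounds in terms of $\|\calR^{-1}_{X_*}(X)\|_{X_*}$ and then invokes \eqref{eq:DistRetr_Estimate_2} to translate to $\dist(X,X_*)$, which is why the constants $c_1,c_2$ appear in its final $c_3 = 1/(2\beta c_2)$ and $c_4 = (2\beta\|\hess f(X_*)\|_{X_*}+1)/(2\beta c_1)$. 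Your route is marginally cleaner since it avoids those two constants; the paper's choice is presumably for stylistic consistency with the retraction-based analysis running through the whole section. You correctly flag the retraction variant as an equivalent alternative in your closing remark, and that is in fact what the paper does.
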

\begin{proof}
Let us set $\beta = \big\|\hess f(X_*)^{-1}\big\|_{X_*}$ and 
\begin{equation*}\label{eq:R(X)}
R(X) = \grad f(X)-{\calT}^{\,\|}_{X_*X}\grad f(X_*) - {\calT}^{\,\|}_{X_*X}\Hess{X_*}{\calR^{-1}_{X_*}(X)}.
\end{equation*} 
In view of \eqref{eq:Taylor_VectorField_Retr}, there exists $\delta_g \in (0,\delta_{X^*})$ such that for any $X\in\Spkn$ with $\dist(X,X_*)<\delta_g$, it holds that
$\|R(X)\|_{X}\leq \frac{1}{2\beta}\|\calR^{-1}_{X^*}(X)\|_{X_*}.$ Since $\grad f(X_*)=0$, we have 
\begin{equation*}
\grad f(X) = {\calT}^{\,\|}_{X_*X}\Hess{X_*}{\calR^{-1}_{X_*}(X)} + R(X).
\end{equation*}
Taking into account the fact that the parallel transport is isometric, it follows, on the one side, that
\begin{align}\notag
\big\|\grad f(X)\big\|_{X} &\leq \big\|\Hess{X_*}{\calR^{-1}_{X_*}(X)}\big\|_{X_*} + \big\|R(X)\big\|_{X}\\
& \leq \bigg(\big\|\hess f(X_*)\big\|_{X_*} + \frac{1}{2\beta}\bigg)\big\|\calR^{-1}_{X_*}(X)\big\|_{X_*}.
\label{eq:Dembo_proof_1}
\end{align}
On the other side, we obtain that
\begin{align}\notag
\big\|\grad f(X)\big\|_{X} &\geq \big\|\Hess{X_*}{\calR^{-1}_{X_*}(X)}\big\|_{X} 
- \big\|R(X)\big\|_{X}\\
& \geq \big\|\hess f(X_*)^{-1}\big\|_{X_*}^{-1}\big\|\calR^{-1}_{X_*}(X)\big\|_{X_*} - \frac{1}{2\beta} \big\|\calR^{-1}_{X_*}(X)\big\|_{X_*}\notag\\
&= \frac{1}{2\beta} \big\|\calR^{-1}_{X_*}(X)\big\|_{X_*}.
\label{eq:Dembo_proof_2}
\end{align}
Then the estimates \eqref{eq:Estimate_grad_dist} follow from \eqref{eq:DistRetr_Estimate_2}, \eqref{eq:Dembo_proof_1}, and \eqref{eq:Dembo_proof_2} with $c_3 = 1/(2\beta c_2)$ and $c_4 = \left(2\beta\|\hess f(X_*)\|_{X_*} + 1\right)/(2\beta c_1)$. \qed
\end{proof}

\paragraph{Superlinear convergence}
We are now ready to state the local convergence result for the Riemannian inexact Newton method given in Algorithm~\ref{alg:Newton_inexact}.
\begin{theorem}[Local superlinear convergence]\label{thm:LocalConvInexactNewton}
Assume that the initial guess \linebreak $X_0\in\Spkn$ is close enough to a~nondegenerate stationary point $X_*\in\Spkn$ of the cost function $f$, i.e., \mbox{$\grad f(X_*)=0$} and $\hess f(X_*)$ is nonsingular. Then, the sequence $\{X_j\}_j$ generated by Algorithm~\textup{\ref{alg:Newton_inexact}} is well-defined and converges to $X_*$ superlinearly.
\end{theorem}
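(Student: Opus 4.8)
The plan is to follow the classical template for local convergence of inexact Newton methods (in the spirit of Dembo--Eisenstat--Steihaug), transported to the Riemannian setting via the retraction-related machinery assembled above, in particular Lemma~\ref{lem:grad_dist}, the approximation formula \eqref{eq:Taylor_VectorField_Retr}, the inverse-retraction/parallel-transport identity of Lemma~\ref{lem:TransInvRetr}, and the distance--preimage equivalence \eqref{eq:DistRetr_Estimate_2}. First I would fix notation: write $e_j := \dist(X_j,X_*)$, set $\beta := \|\hess f(X_*)^{-1}\|_{X_*}$, and shrink a neighborhood of $X_*$ so that all of the following hold simultaneously on it: $\hess f(X)$ is nonsingular with $\|\hess f(X)^{-1}\|_X \leq 2\beta$ by \eqref{eq:hessnonsing}; the bounds \eqref{eq:Estimate_grad_dist}, \eqref{eq:DistRetr_Estimate}--\eqref{eq:DistRetr_Estimate_2}, and \eqref{eq:Taylor_VectorField_Retr} are valid; and $X_*$ together with the iterate lie in a common TNN$\cap$TRtrN so that Lemma~\ref{lem:TransInvRetr} applies. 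Well-definedness of $\tilde Z_j$ (line 2 of Algorithm~\ref{alg:Newton_inexact}) follows from nonsingularity of $\hess f(X_j)$, and the inexact stopping criterion can always be met (e.g.\ the exact solution satisfies it with residual zero).

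The heart of the argument is a one-step contraction estimate $e_{j+1} = o(e_j)$. I would proceed as follows. Let $r_j := \Hess{X_j}{\tilde Z_j} + \grad f(X_j)$ denote the Newton residual, so that by the stopping rule $\|r_j\|_{X_j} \leq \eta_j \|\grad f(X_j)\|_{X_j}$ with $\eta_j = \min\{\eta,\|\grad f(X_j)\|_{X_j}^\mu\} \to 0$; combined with \eqref{eq:Estimate_grad_dist} this gives $\|r_j\|_{X_j} = o(e_j)$. From the Newton equation, $\tilde Z_j = \hess f(X_j)^{-1}(r_j - \grad f(X_j))$, hence using $\|\hess f(X_j)^{-1}\|_{X_j}\le 2\beta$ and \eqref{eq:Estimate_grad_dist} we get $\|\tilde Z_j\|_{X_j} = O(e_j)$; in particular $\tilde Z_j$ eventually lies inside the injectivity radius, so $X_{j+1}=\calR_{X_j}(\tilde Z_j)$ is well-defined and, by \eqref{eq:DistRetr_Estimate_1}, $e_{j+1} \leq c_2 \|\tilde Z_j\|_{X_j} = O(e_j)$. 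To upgrade this to $e_{j+1}=o(e_j)$, I would apply Lemma~\ref{lem:grad_dist} at $X_{j+1}$: $e_{j+1} \leq \tfrac{1}{c_3}\|\grad f(X_{j+1})\|_{X_{j+1}}$, and then estimate $\grad f(X_{j+1})$ by invoking the first-order expansion \eqref{eq:Taylor_VectorField_Retr} with $\tilde X = X_j$ and $X = X_{j+1}=\calR_{X_j}(\tilde Z_j)$, so that $\calR_{X_j}^{-1}(X_{j+1}) = \tilde Z_j$. This yields
\[
\grad f(X_{j+1}) = \calT^{\,\|}_{X_jX_{j+1}}\big(\grad f(X_j) + \Hess{X_j}{\tilde Z_j}\big) + o(\|\tilde Z_j\|_{X_j}) = \calT^{\,\|}_{X_jX_{j+1}} r_j + o(e_j),
\]
using the Newton equation and $\|\tilde Z_j\|_{X_j}=O(e_j)$. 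Since $\calT^{\,\|}$ is an isometry and $\|r_j\|_{X_j}=o(e_j)$, we conclude $\|\grad f(X_{j+1})\|_{X_{j+1}} = o(e_j)$, whence $e_{j+1} = o(e_j)$. Iterating (and noting the neighborhood is forward-invariant once $e_0$ is small enough, since $e_{j+1}\le \tfrac12 e_j$ eventually) gives $e_j \to 0$ superlinearly; the sequence is therefore well-defined for all $j$ and converges to $X_*$.

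The step I expect to be the main obstacle is making the use of \eqref{eq:Taylor_VectorField_Retr} at the \emph{moving} base point $X_j$ (rather than at the fixed $X_*$) fully rigorous: the $o(\cdot)$ term there is a priori base-point-dependent, so one needs the constants/neighborhood in \eqref{eq:Taylor_VectorField_Retr}, \eqref{eq:hessnonsing}, and \eqref{eq:DistRetr_Estimate}--\eqref{eq:DistRetr_Estimate_2} to be \emph{uniform} for all $X_j$ in a fixed compact neighborhood of $X_*$ — which is exactly what the "compact subset of the intersection of a TNN and a TRtrN of $\tilde X$" hypotheses in those lemmas are designed to provide. A careful proof must therefore first lock down one such compact neighborhood, verify it is forward-invariant under the iteration, and only then run the recursion above; the parallel-transport bookkeeping (keeping track that $\grad f(X_j)$ and $\Hess{X_j}{\tilde Z_j}$ live in $\TX[X_j]$ while $\grad f(X_{j+1})$ lives in $\TX[X_{j+1}]$, related by $\calT^{\,\|}_{X_jX_{j+1}}$ up to the $O(\dist^2)$ correction of Lemma~\ref{lem:TransInvRetr}) is routine once the uniformity is in place. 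If one additionally wants quadratic convergence under a Lipschitz-type assumption on $\hess f$, the same scheme works with the $o(e_j)$ terms replaced by $O(e_j^2)$ and $\eta_j = O(e_j)$, i.e.\ $\mu=1$ together with \eqref{eq:Estimate_grad_dist}.
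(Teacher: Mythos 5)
Your proposal is correct in substance but takes a genuinely different route from the paper. You control the \emph{gradient norm} at the new iterate and then convert to distance via Lemma~\ref{lem:grad_dist}: you expand $\grad f(X_{j+1})$ using \eqref{eq:Taylor_VectorField_Retr} around the \emph{moving} base point $\tilde X = X_j$, so that $\calR_{X_j}^{-1}(X_{j+1})=\tilde Z_j$ plugs in cleanly and $\grad f(X_j)+\Hess{X_j}{\tilde Z_j}$ collapses to the residual $r_j$. The paper instead controls the \emph{distance} $\dist(X_{j+1},X_*)$ directly: it bounds it via the retraction Lipschitz constant \eqref{eq:Retr_LipschitzConst} and $\|\hess f(X_j)^{-1}(\res_j-\grad f(X_j))-\calR_{X_j}^{-1}(X_*)\|_{X_j}$, then always expands around the \emph{fixed} point $X_*$, using Lemma~\ref{lem:TransInvRetr} to relate $\calR_{X_j}^{-1}(X_*)$ to $\calR_{X_*}^{-1}(X_j)$ and the transport-continuity of the Hessian \eqref{eq:PhessP} for the final $o(e_j)$ bound. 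The tradeoff is exactly the one you flag: your route has lighter parallel-transport bookkeeping (no need to compare Hessians at two different points, no use of \eqref{eq:TransInvRetr} or \eqref{eq:PhessP}), but the $o(\cdot)$ remainder in \eqref{eq:Taylor_VectorField_Retr} is a priori base-point dependent and must be made uniform over $X_j$ in a compact neighborhood — an extra argument (uniform continuity of $\hess f$ on a compact set) that the paper side-steps by always anchoring its expansions at $X_*$. Both deliver $e_{j+1}=o(e_j)$; the paper's version is more self-contained given the lemmas as literally stated, while yours is closer to the Euclidean Dembo--Eisenstat--Steihaug template and arguably easier to read once the uniformity is pinned down.
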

\begin{proof}
First, we prove that for any $j\geq 0$, if $\tilde{Z}_j\in \TX$ is an~inexact solution to the Newton equation \eqref{eq:Newtoneq_general} satisfying 
\begin{equation}\label{eq:InexactSolving}
    \big\|\Hess{X_j}{\tilde{Z}_j} + \grad f(X_j) \big\|_{X_j} \leq \min\big\{\eta,\|\grad f(X_j)\|_{X_j}^\mu\big\}\big\|\grad f(X_j)\big\|_{X_j}
\end{equation}
with $0<\eta<1$, $\mu>0$ and $X_j$ close enough to $X_*$, then
\begin{equation}\label{eq:ConvInexactNew_concl}
    \lim\limits_{X_j\to X^*}\frac{\dist(\calR_{X_j}(\tilde{Z}_j),X_*)}{\dist(X_j,X_*)} = 0.
\end{equation}
Let $\res_j = \Hess{X_j}{\tilde{Z}_j} + \grad f(X_j)$ denote the residual of the inexact solver at $j$-th iterate.  Then, by assumption the initial guess $X_0$ can be chosen such that $\dist(X_0,X_*) <\delta \leq \min\{\delta_{\tilde{X}}, \delta_g, \tilde{r}\}$, where the constants are determined in \eqref{eq:hessnonsing}, \eqref{eq:DistRetr_Estimate} (with $\tilde{X}$ replaced by $X_*$), and \eqref{eq:Estimate_grad_dist}. Moreover, $\delta$ can be necessarily reduced to let \eqref{eq:Taylor_VectorField} hold,  $\|\tilde{Z}_0\|_{X_0}, \|\calR_{X_0}^{-1}(X_*)\|_{X_0} \in (0,\delta^{\tan}_{X_*})$, and $X_0$ 
belong to the intersection of a TNN and a TRtrN of $X_*$. Set $\beta=\|\hess f(X_*)^{-1}\|_{X_*}$. Following the same route as in the proof of \cite[Lem.~3.1]{BortFF22}, we obtain by using \eqref{eq:Retr_LipschitzConst}, \eqref{eq:TransInvRetr}, \eqref{eq:hessnonsing}, \eqref{eq:Taylor_VectorField_Retr}, and the isometry of the parallel transport that
\begin{align*}
&\dist\big(\calR_{X_j}(\tilde{Z}_j),X_*\big) 
= \dist\big(\calR_{X_j}(\tilde{Z}_j),\calR_{X_j}(\calR_{X_j}^{-1}(X_*))\big)\\
&\quad\leq K_{\calR,X_*}\big\|\hess f(X_j)^{-1}\big(\res_j-\grad f(X_j)\big) - \calR_{X_j}^{-1}(X_*)\big\|_{X_j}\\
&\quad= \!K_{\calR,X_*}\!\big\|\hess f(X_j)^{-1}\!\big(\res_j\!-\!\grad f(X_j)\big)\!\! +\! \calT^{\,\|}_{X_*X_j}\!\calR_{X_*}^{-1}\!(X_j)\! -\! O\big(\dist(X_*,\!X_j)^2\big)\!\big\|_{X_j}\\
&\quad\leq K_{\calR,X_*}\big\|\hess f(X_j)^{-1}\big\|_{X_j} \big\|\res_j - \grad f(X_j) + \Hess{X_j}{{\calT}^{\,\|}_{X_*X_j} \calR_{X_*}^{-1}(X_j)}\big\|_{X_j}\\ 
&\quad\quad+  K_{\calR,X_*}O\big(\dist(X_*,X_j)^2\big)\\
&\quad \leq 2\,\beta\, K_{\calR,X_*} 
\bigl\|\res_j + \big(\hess f(X_j){\calT}^{\,\|}_{X_*X_j}-{\calT}^{\,\|}_{X_*X_j} \hess f(X_*)\big)\calR_{X_*}^{-1}(X_j) \bigr.\\
 &\quad\quad- \bigl.o\big(\|\calR_{X_*}^{-1}(X_j)\|_{X_*}\big)\bigr\|_{X_j} + K_{\calR,X_*}O\big(\dist(X_*,X_j)^2\big)\\
&\quad\leq 2\,\beta\, K_{\calR,X_*}
\Big(\big\|\hess f(X_j){\calT}^{\,\|}_{X_*X_j}-{\calT}^{\,\|}_{X_*X_j}\hess f(X_*)\big\|_{X_j}\big\|\calR_{X_*}^{-1}(X_j)\big\|_{X_*} \Big.\\
&\quad\quad\Big.+ \big\|\res_j\big\|_{X_j} + o\big(\big\|\calR_{X_*}^{-1}(X_j)\big\|_{X_*}\big)\Big) + K_{\calR,X_*}O\big(\dist(X_*,X_j)^2\big)\\
&\quad\leq 2\,\beta\,K_{\calR,X_*}
\Bigl(\big\|{\calT}^{\,\|}_{X_j X_*}\hess f(X_j){\calT}^{\,\|}_{X_*X_j}-\hess f(X_*)\big\|_{X_j}\big\|\calR_{X_*}^{-1}(X_j)\big\|_{X_*} \Bigr.\\
&\quad\quad+ \Bigl.\big\|\res_j\big\|_{X_j} + o\big(\big\|\calR_{X_*}^{-1}(X_j)\big\|_{X_*}\big)\Bigr) + K_{\calR,X_*}O\big(\dist(X_*,X_j)^2\big).
\end{align*}
Then, \eqref{eq:ConvInexactNew_concl} is derived from the last inequality, \eqref{eq:PhessP},  \eqref{eq:DistRetr_Estimate}, \eqref{eq:InexactSolving}, and \eqref{eq:Estimate_grad_dist}.
			
The superlinear convergence follows from  \eqref{eq:ConvInexactNew_concl} and arguments similar to the proof of \cite[Thm.~3.1]{FernFY17}.\qed
\end{proof}
 
For the sake of independent interest, we restate Theorem~\ref{thm:LocalConvInexactNewton} in the general case as follows.
\begin{theorem}\label{thm:LocalConvInexactNewton_general}
Let $\Phi$ be a~continuously differentiable vector field on a Riemannian ma\-ni\-fold $\calM$ equipped with a Riemannian connection $\nabla$ and a retraction $\calR$. Assume that $x_*\in \calM$ is a~nondegenerate singular point of $\Phi$, i.e., $\Phi(x_*) = 0$, and the covariant derivative 
$\nabla \Phi(x_*)$ is nonsingular. Then, there exists $\delta >0$ such that for any starting point $x_0 \in \calM$ with $\dist(x_0,x_*)<\delta$, the iterate $x_{j+1} = \calR(\tilde{z}_j)$, where $\tilde{z}_j$, as a~tangent vector of $\calM$ at $x_j$, is an inexact solution to the Newton equation 
   \begin{equation}\label{eq:newtV}
       \nabla \Phi(x_j)z_j + \Phi(x_j) = 0
   \end{equation} 
satisfying $\|\nabla \Phi(x_j)\tilde{z}_j + \Phi(x_j)\|_{x_j} \leq \min\{\eta,\|\Phi(x_j)\|_{x_j}^\mu\}\|\Phi(x_j)\|_{x_j}$ with $ 0<\eta<1$ and $\mu>0$, 
is well-defined and superlinearly converges to $x_*$.
\end{theorem}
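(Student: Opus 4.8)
The plan is to observe that the proof of Theorem~\ref{thm:LocalConvInexactNewton} nowhere uses the specific structure of the symplectic Stiefel manifold, nor the fact that $\grad f$ is a gradient vector field; it transcribes verbatim after replacing $\grad f$ by $\Phi$, $\hess f$ by $\nabla\Phi$, $\Spkn$ by $\calM$, and $X_*$ by $x_*$. Concretely, first I would check that every auxiliary fact collected in the \emph{Exponential mapping} and \emph{Retraction-related results} paragraphs---the injectivity-radius constructions, \eqref{eq:TransInvExp}, the estimates \eqref{eq:DistRetr_Estimate}--\eqref{eq:DistRetr_Estimate_2}, the Lipschitz-type constant \eqref{eq:Retr_LipschitzConst}, \eqref{eq:ApproxInvRetr}, and Lemma~\ref{lem:TransInvRetr}---is purely metric/retraction data on a general Riemannian manifold and does not involve $f$ at all, so it carries over unchanged. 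The statements that do involve a gradient, namely \eqref{eq:PhessP}, \eqref{eq:hessnonsing}, and the first-order expansions \eqref{eq:Taylor_VectorField} and \eqref{eq:Taylor_VectorField_Retr}, are in fact instances of the results from \cite{FerrS02,FernFY17} as stated there for an arbitrary $C^1$ vector field and its covariant derivative; one only has to note that the self-adjointness of a Hessian is never invoked, only the nonsingularity and continuity of $\nabla\Phi$ near $x_*$. The same remark applies to Lemma~\ref{lem:grad_dist}, whose proof uses only $\Phi(x_*)=0$, nonsingularity of $\nabla\Phi(x_*)$, and \eqref{eq:Taylor_VectorField_Retr}, so the two-sided bound $c_3\,\dist(x,x_*)\le\|\Phi(x)\|_x\le c_4\,\dist(x,x_*)$ holds for $x$ in a sufficiently small geodesic ball around $x_*$.

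With these ingredients in place I would reproduce the two-part argument of Theorem~\ref{thm:LocalConvInexactNewton}. Part one: show by induction that the iteration is well-defined and $\dist(x_j,x_*)<\delta$ for all $j$, where $\delta$ is chosen small enough that all the above neighborhoods, the \eqref{eq:hessnonsing}-type bound on $\|\nabla\Phi(x)^{-1}\|_x$, the Lemma~\ref{lem:grad_dist}-type bounds, and $\|\tilde z_j\|_{x_j},\,\|\calR_{x_j}^{-1}(x_*)\|_{x_j}\in(0,\delta^{\tan}_{x_*})$ all hold; since $\nabla\Phi(x_j)$ is nonsingular there, the Newton equation \eqref{eq:newtV} is solvable and an inexact solution $\tilde z_j$ meeting the forcing-term condition exists. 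Part two: establish
\[
\lim_{x_j\to x_*}\frac{\dist(\calR_{x_j}(\tilde z_j),x_*)}{\dist(x_j,x_*)}=0
\]
by the same chain of inequalities, writing $\dist(\calR_{x_j}(\tilde z_j),x_*)=\dist\!\big(\calR_{x_j}(\tilde z_j),\calR_{x_j}(\calR_{x_j}^{-1}(x_*))\big)$, bounding it by $K_{\calR,x_*}\|\tilde z_j-\calR_{x_j}^{-1}(x_*)\|_{x_j}$, inserting $\tilde z_j=\nabla\Phi(x_j)^{-1}(\res_j-\Phi(x_j))$ with $\res_j=\nabla\Phi(x_j)\tilde z_j+\Phi(x_j)$, using Lemma~\ref{lem:TransInvRetr} to replace $\calR_{x_j}^{-1}(x_*)$ by $-\calT^{\,\|}_{x_*x_j}\calR_{x_*}^{-1}(x_j)$ up to $O(\dist(x_*,x_j)^2)$, then applying the \eqref{eq:hessnonsing}-type bound, the $\Phi$-analogue of \eqref{eq:Taylor_VectorField_Retr}, the isometry of parallel transport, the \eqref{eq:PhessP}-type continuity of $\calT^{\,\|}_{x_jx_*}\nabla\Phi(x_j)\calT^{\,\|}_{x_*x_j}$, the forcing-term bound $\|\res_j\|_{x_j}\le\|\Phi(x_j)\|_{x_j}^{1+\mu}$ (valid once $\min\{\eta,\|\Phi(x_j)\|^\mu\}\le\|\Phi(x_j)\|^\mu$), and the Lemma~\ref{lem:grad_dist}-type equivalence of $\|\Phi(x_j)\|_{x_j}$ and $\dist(x_j,x_*)$. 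Dividing by $\dist(x_j,x_*)$ and letting $x_j\to x_*$ annihilates every term, which is exactly \eqref{eq:ConvInexactNew_concl} in the general notation; superlinear convergence of $\{x_j\}$ then follows as in \cite[Thm.~3.1]{FernFY17}.

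The main obstacle is not the algebra---which is literally that of Theorem~\ref{thm:LocalConvInexactNewton}---but making the ``verbatim'' claim airtight: one must confirm that each cited lemma from \cite{doCarmo92,FerrS02,FernFY17,ZhuS20,HuangAG2015,BortFF22} is genuinely stated (or trivially reproved) for a general $C^1$ vector field and its covariant derivative, and not only for $\grad f$ and $\hess f$; in particular, that nowhere is the symmetry or self-adjointness of the Hessian used, and that the nondegeneracy hypothesis $\nabla\Phi(x_*)$ nonsingular is all that the local analysis ever requires. Once this bookkeeping is discharged, there is nothing further to prove.
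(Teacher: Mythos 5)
Your proposal is correct and matches the paper's approach: the paper itself gives no separate proof of Theorem~\ref{thm:LocalConvInexactNewton_general}, simply remarking earlier in the section that ``we state all results on the symplectic Stiefel manifold, although they hold for a general Riemannian manifold'' and then introducing the theorem with ``we restate Theorem~\ref{thm:LocalConvInexactNewton} in the general case as follows,'' i.e.\ exactly the mutatis-mutandis transcription you describe. Your enumeration of what must be checked (that the exponential/retraction estimates are $f$-independent, that the cited results from \cite{FerrS02,FernFY17,ZhuS20,BortFF22} are stated for a general $C^1$ vector field and its covariant derivative, and that no self-adjointness of the Hessian is used) is more explicit than the paper's treatment but is precisely the bookkeeping the paper implicitly asks the reader to perform.
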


As in computation, the (exact) solution to the Newton equation \eqref{eq:newtV}
is usually intractable, Theorem~\ref{thm:LocalConvInexactNewton_general} is a~practical extension of \cite[Thm.~3.1]{BortFF22} to the inexact case.

\subsection{Global convergence of the hybrid Riemannian Newton method}

Using the convergence results for the RGD method from \cite{GSAS21} and for Algorithms~\ref{alg:Newton_plain} and \ref{alg:Newton_inexact} as presented above, we establish the global convergence of Algorithm~\ref{alg:Newton_hybrid}.
  
\begin{theorem}\label{theo:Newton_hybrid}
Assume that $X_*$ is an accumulation point of the sequence $\{X_j\}_j$ ge\-ne\-rated by Algorithm~\textup{\ref{alg:Newton_hybrid}} with an appropriately chosen \emph{switching parameter} $\theta$ and that $\hess f(X_*)$ is nonsingular. Then $X_*$ is a critical point of the minimization problem~\eqref{eq:opt_prob}. Furthermore, $\{X_j\}_j$ converges quadratically {\rm(}resp. superlinearly{\rm)} to $X_*$ if Algorithm~\textup{\ref{alg:Newton_plain}} {\rm(}resp. Algorithm~\textup{\ref{alg:Newton_inexact}}{\rm)} is adopted. 
If $\hess f(X_*)$ is additionally positive-definite, then $X_*$ is a~minimizer of $f$.
\end{theorem}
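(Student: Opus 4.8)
The plan is to decompose the argument along the two phases of Algorithm~\ref{alg:Newton_hybrid} and to glue them together using the hypotheses that $X_*$ is an accumulation point of $\{X_j\}_j$ and $\hess f(X_*)$ is nonsingular, following the scheme of the hybrid methods in \cite{SatoI13,IzmaS14}. For the first phase I would invoke the convergence analysis of the RGD method (\cite{GSAS21}, which rests on \cite[Thm.~4.3.1]{AbsiMS08}): the RGD iterates satisfy $\liminf_j\|\grad f(X_j)\|_{\Mb_{X_j}}=0$ and every accumulation point of the RGD sequence is a critical point of~\eqref{eq:opt_prob}. Hence, for the chosen $\theta>0$, the switching test $\|\grad f(X_{j_*})\|_{\Mb_{X_{j_*}}}\le\theta$ is met after finitely many RGD steps, so the ``else'' branch is executed at most finitely often and the iteration enters the second phase at a finite index $j_*$. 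From then on $\{X_j\}_{j\ge j_*}$ is the sequence produced by Algorithm~\ref{alg:Newton_plain} (resp.~Algorithm~\ref{alg:Newton_inexact}) started at $X_{j_*}$, and in particular $X_*$ is an accumulation point of those Newton iterates.

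\emph{Transition, second phase, and minimizer.} Because $\hess f(X_*)$ is nonsingular and $\grad f$ is continuous, there is a neighborhood $U$ of $X_*$ on which $\hess f$ is invertible with $\|\hess f(\cdot)^{-1}\|\le 2\|\hess f(X_*)^{-1}\|_{X_*}$ (cf.~\eqref{eq:hessnonsing}) and on which the first-order expansion~\eqref{eq:Taylor_VectorField_Retr} holds; substituting the Newton equation $\hess f(X_j)[Z_j]=-\grad f(X_j)$ into~\eqref{eq:Taylor_VectorField_Retr} and shrinking $U$ yields the contraction $\|\grad f(X_{j+1})\|_{X_{j+1}}\le\kappa\|\grad f(X_j)\|_{X_j}$ with $\kappa\in(0,1)$ for $X_j\in U$ (in the inexact case $\kappa$ also absorbs the forcing term via $\eta_j\le\eta<1$), together with $\dist(X_{j+1},X_j)\le L\|\grad f(X_j)\|_{X_j}$ from~\eqref{eq:DistRetr_Estimate_1}. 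A capture argument then shows that once an iterate enters a small enough ball around $X_*$ with gradient norm below a threshold $\gamma>0$ determined by $X_*$ and the retraction constants, the subsequent tail stays in $U$ and has geometrically decaying gradient norms and step lengths, hence is Cauchy; choosing $\theta$ ``appropriately'' (below $\gamma$) and using that $X_*$ is an accumulation point to produce such an iterate $X_{j'}$ with $j'\ge j_*$, the tail $\{X_j\}_{j\ge j'}$ converges to a critical point $X^\dagger\in U$. Since a convergent sequence has a unique accumulation point, $X^\dagger=X_*$, so $\grad f(X_*)=0$ and $X_*$ is a critical point of~\eqref{eq:opt_prob}; now that $X_*$ is known to be a nondegenerate stationary point, \cite[Thm.~6.3.2]{AbsiMS08} (resp.~Theorem~\ref{thm:LocalConvInexactNewton}) upgrades the convergence of $\{X_j\}_j$ to the quadratic (resp.~superlinear) rate. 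Finally, if $\hess f(X_*)$ is additionally positive-definite, then $X_*$ is a critical point at which the Riemannian Hessian is positive-definite, hence a strict local minimizer of $f$ by the second-order sufficient optimality condition on Riemannian manifolds (see, e.g., \cite{AbsiMS08,Boumal23}).

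\emph{Main obstacle.} The delicate step is the transition: making rigorous that ``$X_*$ an accumulation point, $\hess f(X_*)$ nonsingular, and $\theta$ small'' forces a Newton-phase iterate into the capture ball around $X_*$ \emph{while all earlier Newton iterates stay well-defined}, and---crucially---that the accumulation point is a genuine critical point and the actual limit, rather than a point the iteration merely revisits infinitely often. This needs the isolation of critical points near $X_*$, Lemma~\ref{lem:grad_dist}, the local contraction inherited from~\eqref{eq:Taylor_VectorField_Retr} and~\eqref{eq:hessnonsing}, and, in the inexact case, the explicit bound on the forcing term $\eta_j=\min\{\eta,\|\grad f(X_j)\|^\mu\}$ so that estimate~\eqref{eq:ConvInexactNew_concl} of Theorem~\ref{thm:LocalConvInexactNewton} transfers to the tail $\{X_j\}_{j\ge j'}$.
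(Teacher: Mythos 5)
Your proposal follows the same two-phase decomposition as the paper's proof: the RGD global convergence result (cited via \cite[Thm.~5.7]{GSAS21} in the paper) establishes both the switch and that $X_*$ is critical, Lemma~\ref{lem:grad_dist} tied to a small enough $\theta$ supplies the transition, the local convergence theorems (\cite[Thm.~6.3.2]{AbsiMS08}, \cite[Thm.~6.7]{Boumal23}, or Theorem~\ref{thm:LocalConvInexactNewton}) give the rate, and second-order sufficiency yields the minimizer statement. The only difference is one of granularity: you explicitly reconstruct the contraction and capture-ball mechanism and the uniqueness-of-limit step, whereas the paper obtains the same content by invoking those local convergence theorems wholesale; this makes your treatment of the transition more self-contained but does not alter the route.
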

\begin{proof}
It has been proven in \cite[Thm.~5.7]{GSAS21} that any accumulation point $X_*$ of the RGD method applied to \eqref{eq:opt_prob} is a~critical point, i.e., \mbox{$\grad f(X_*) = 0$}. That is, if the switching parameter $\theta$ in Algorithm~\ref{alg:Newton_hybrid} is chosen small enough, thanks to Lemma~\ref{lem:grad_dist}, $X_{j_*}$ generated in the first phase is already close enough to $X_*$ for some~$j_*$. Starting the second phase with $X_{j_*}$,  Algorithm~\ref{alg:Newton_plain} generates a~sequence $\{X_j\}_j$ which, in view of \cite[Thm.~6.3.2]{AbsiMS08} or \cite[Thm.~6.7]{Boumal23}, converges to $X_*$ quadratically.
The superlinear convergence in case of using Algorithm~\ref{alg:Newton_inexact} in Algorithm~\ref{alg:Newton_hybrid} follows immediately from Theorem~\ref{thm:LocalConvInexactNewton}.
The last statement is a~consequence of the sufficient second-order optimality conditions. \qed
\end{proof}

\section{Numerical examples}
\label{sec:Numer}
We will test the proposed optimization schemes and compare them with the existing methods on several problems.
For each example of small dimension, we present the results of the RGD method  \cite[Alg.~1]{GSAS21} and the hybrid Riemannian Newton methods with the second phase using the Riemannian Newton method (hRN) as in Algorithm~\ref{alg:Newton_plain} or using the Riemannian inexact Newton  (hRiN) method as in Algorithm~\ref{alg:Newton_inexact}, where MINRES is employed in the inner iteration. For large problems, the hRN method is, however, excluded. 

The two retractions presented at the beginning of section~\ref{sec:Solve_NewtonEq} will be used, which results in different schemes whose names will be made up by adding ``Cay'' for the Cayley retraction and ``SR'' for the SR retraction.
Furthermore, we consider different metrics and extend the names of the corresponding optimization schemes by ``c'' for the canonical-like metric, ``e'' for the Euclidean metric, and ``M'' for the weighted Euclidean metric with a~suitably chosen weighting matrix $\Mb$. For models that have the Euclidean Hessian of the form $\nabla^2\bar{f}(X)[Z] = MZ$ with an~spd matrix $M$, we will choose the weighted Euclidean metric $g_{\Mb}\!$ with $\Mb=M$.

For the RGD method combined with non-monotone line search~\cite[Alg.~1]{GSAS21} and the RGD phase in the hybrid Riemannian Newton methods, we use the parameters $\alpha = 0.85$, $\beta = 1\e-4$, $\delta = 0.5$, $\gamma_0 = 1\e-3$,
$\gamma_{\min} = 1\e-15$, and $\gamma_{\max} = 1\e+5$, if not specified otherwise. 
In hRiN, the inexact Newton parameters are set to $\eta = 1\e-3$, $\mu=0.5$, and the maximal number of inner iterations in MINRES is chosen as~$nk$. 
 
 Our initial tests showed that the second phase of the hybrid Newton schemes can always be run with the unit step size provided that the switching parameter~$\theta$ was chosen small enough. This results, however, in much more iterations in the first phase. To overcome this difficulty, in our implementation, we invoke in the second phase the so-called damping strategy, in which the step size is determined adaptively by using the backtracking linear search which guarantees a~monotone reduction in the value of the cost function. We use the same parameters as in the first phase except for $\alpha=0$ and a~smaller $\delta = 0.2$ to possibly reduce the number of backtracking steps. To avoid the dependence on the quality of the initial guess and the choice for metric, we will consider the method convergent at step $j$ if within a~given number of iterations \texttt{mxit}, 
 the condition 
 \begin{equation}\label{eq:stop}
 \|\grad f(X_j)\|_{X_j} \leq \texttt{tol}\|\grad f(X_0)\|_{X_0}
 \end{equation}
 is fulfilled for a~given tolerance ${\tt tol}$. In this case, we set $X_* = X_j$ and  $f_* = f(X_*)$. If the tolerance inequality does not hold, $X_*$ is the last iterate. The switching scheme is controlled by $\theta$ in a~similar way. 

All computations are done on a~standard laptop with an Intel(R) Core(TM) i7-4500U CPU at 1.80~GHz (up to 2.40~GHz) and 8~GB of RAM running MATLAB 2023a under Windows~10 Home.

\subsection{Symplectic solution of a~matrix least squares problem}
Given a nonsingular matrix $A\in \Rbnn$ and $B\in \Rbnk$, the matrix equation 
\begin{equation}\label{eq:MatEq}
			AX = B
\end{equation}
has a unique solution $X = A^{-1}B$. We are now aiming to find a~solution to \eqref{eq:MatEq} in the class of symplectic matrices. In general, neither the existence of a solution to this constrained equation nor its uniqueness, if it exists, is guaranteed. A~natural idea is to solve this problem in the least squares sense by minimizing the residual on the symplectic Stiefel manifold $\Spkn$, i.e.,
\begin{equation}\label{eq:costfunc_nearestprob_3}
	\min_{X\in\Spkn} f(X) = \frac{1}{2}\|AX-B\|_{\mathrm{F}}^2. 
\end{equation}
The ambient gradient and Hessian of the cost function in \eqref{eq:costfunc_nearestprob_3} are given by
\[
    \nabla\bar{f}(X) = A^T\!AX-A^TB\quad\text{and}\quad \Nablaf{X}{Z} = A^T\!AZ,
\]
respectively. Note that in a~special case, when $A \in \Spn$ and $B \in \Spkn$, the matrix equation~\eqref{eq:MatEq} and the optimization problem~\eqref{eq:costfunc_nearestprob_3} have a~unique solution $X_{\min} = J_{2n}^TA^T\!J_{2n}B$. 

In the first test, we consider a~small problem for the availability of the hRN method,  where $A \in \Spn$ and $B \in \Spkn$ with $n=50$ and $k = 6$ are generated as follows. For the coefficient matrix $A$, after setting random generator as \texttt{default} (which is indeed \texttt{rng(0,`twister')}), we first generate the matrices $\tilde{A}_1 = \texttt{rand}(n,n)$, $\tilde{A}_2 = \texttt{rand}(n,n)$ and then symmetrize them by setting $A_1 = 0.1(\tilde{A}_1^{} + \tilde{A}_1^T)$ and $A_2 = 0.1(\tilde{A}_2^{} + \tilde{A}_2^T)$. At the end, we choose the coefficient matrix 
$A = \left[\begin{smallmatrix}
			I_n&A_1\\A_2&C
\end{smallmatrix}\right]$ with $C = I_n + A_2A_1$ as in~\cite{DopiJ09}. The right-hand side $B$ and the initial guess $X_0$ are chosen as the randomly generated matrices from \texttt{\texttt{rng(1,`twister')}} and \texttt{\texttt{rng(0,`philox')}}, respectively, and then simplecticized using the SR decomposition~\cite{GSS24}. For this small-size problem, we set $\texttt{tol}=1\e-10$, and for all hR(i)N schemes, we choose $\theta = 1\e-4$ except for $\theta = 1\e-5$ for the schemes under the Euclidean metric. 

Detailed results for the convergent schemes are presented in Table~\ref{tab:matrixeq}, where ``\#iter'', ``time'' and ``feas'' stand for the number of (outer) iterations, the wall-clock time in seconds and the feasibility $\|X_*^TJ_{2n}X_*^{}-J_{2k}\|_{\mathrm{F}}$. The convergence history of the runs is given in Figure~\ref{fig:matrixeq_small}. The following observations can be easily drawn from these results: 1)~only the schemes under the carefully chosen metric converge and/or they converge much faster; 2)~for the reasonable switching parameter, $\theta = 1\e-4$ in this test, the results of the hRiN methods are almost a copy of that of the corresponding hRN methods except for the fact that they are computationally less expensive in the second phase, as expected; however, if a~small switching parameter is used, e.g., $1\e-5$, hRN methods are more accurate and expensive than the hRiN methods; 3)~the combination of the SR retraction and the weighted Euclidean metric delivers the best results, which also validates the preconditioning effect. In addition, with the same setting but without fixing the generator and seed in generating the starting point $X_0$, i.e., it is generated \linebreak

\begin{table}[hb]
		\centering
		\small
		{\caption{Symplectic least squares problem: $n=50, k = 6$, $\texttt{tol} = 1\e-10$, $\theta=1\e-5$ for the Newton methods under the Euclidean metric and $\theta=1\e-4$ for the rest.}
		\label{tab:matrixeq}}
		\begin{tabular}{lrrrrcccc}
			\toprule
			 \multirow{2}{*}{Method} & \multicolumn{2}{c}{\#iter} & \multicolumn{2}{c}{time} & \multirow{2}{*}{$f_*$} & \multirow{2}{*}{$\|\grad f(X_*)\|_{X_*}$} & \multirow{2}{*}{$\frac{\|X_*-X_{\min}\|_{\mathrm{F}}}{\|X_{\min}\|_{\mathrm{F}}}$} & \multirow{2}{*}{feas}\\\cmidrule(l){2-3} \cmidrule(l){4-5} 
    &\quad 1st&2nd&\quad 1st&2nd&&&&\\ \midrule
RGD-Cay-M  & 495   &&   0.86      &&  $3.2\e-11$&  $2.3\e-08$ &  $1.4\e-07$ & $8.3\e-05$\\
  RGD-SR-M  &  40 && 0.06 && $4.9\e-21$ & $9.9\e-11$ & $2.3\e-12$ & $3.6\e-12$ \\\midrule
 hRN-Cay-e &  2948 &   4 & 1.56 & 0.38 & $2.8\e-23$ & $1.9\e-11$ & $1.6\e-13$ & $2.7\e-11$\\
  hRN-SR-e  & 2916 &  4 & 2.32 & 0.36 & $7.8\e-23$ & $1.4\e-10$ & $5.0\e-13$ & $3.1\e-12$\\
 hRN-Cay-M & 494 & 1 & 0.89 & 0.09 & $3.2\e-11$ & $1.5\e-08$ & $1.4\e-07$ & $8.3\e-05$\\
  hRN-SR-M & 38 & 2 & 0.05 & 0.19 & $1.7\e-22$ & $1.8\e-11$ & $7.3\e-13$ & $3.4\e-12$\\\midrule
hRiN-Cay-e & 2948 & 6 & 1.51 & 0.11& $7.3\e-15$ & $3.9\e-09$ & $8.1\e-09$ & $1.9\e-06$\\
 hRiN-SR-e & 2916 & 5 & 2.66 & 0.09 & $7.5\e-17$ & $2.0\e-09$ & $1.1\e-09$ & $3.0\e-12$\\
hRiN-Cay-M & 494 & 1 & 0.92 & 0.01 & $3.2\e-11$ & $1.5\e-08$ & $1.4\e-07$ & $8.3\e-05$\\
 hRiN-SR-M & 38 & 2 & 0.06 & 0.02 & $9.4\e-23$ & $1.4\e-11$ & $3.5\e-13$ & $3.2\e-12$\\
\bottomrule
\end{tabular}
\end{table}

\begin{figure}[t]
 		\centering
  \input{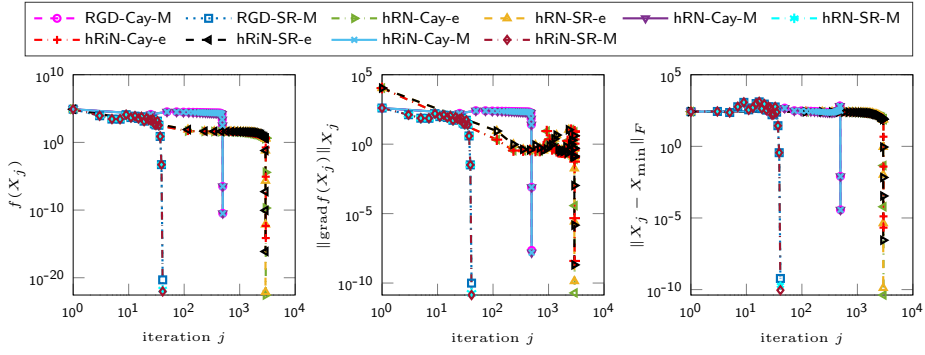}
 		{\caption{Symplectic solution of a matrix least squares problem with $n=50, k = 6$ }
 			\label{fig:matrixeq_small}}
\end{figure}

\noindent
randomly and differently every run, we run again hRN-Cay-e and hRiN-SR-M schemes 50 times. The resulting iterate always tends to the minimizer which arguably means that the hybrid Newton scheme converges globally.

In the second test, we invoke some sparse matrices in the MATLAB gallery as \texttt{A1 = 0.5*gallery(`poisson',20), A2 = 0.1*gallery(`tridiag',400)} and construct $A$ as above; $B$ and $X_0$ are chosen similarly except for the initializations by \texttt{default} and \texttt{rng(1,`twister')}, respectively, and $k=10$. Moreover, we set the maximal number of iterations $\texttt{maxit} = 5000$, $\texttt{tol} = 1\e-8$, and $\theta = 1\e-2$ and $\theta = 1\e-3$ for the hRiN methods under the weighted and standard Euclidean metrics, respectively. 

Numerical results for all convergent combinations of the optimization methods, retractions, and metrics, can be found in Table~\ref{tab:matrixeq_sparse}; the running history is given in~Figure~\ref{fig:matrixeq_sparse_new}. One can observe that the weighted Euclidean metric improves the accuracy of the solution by delivering smaller values and gradient norms of the cost function. It should also be noted that the hRiN schemes in the second phase do not help considerably accelerate the solution computation. The reason is that there are too few Newton steps in the iterates. One can try to increase the switching parameter~$\theta$ to ensure an earlier switch to the second phase. 
In this case, however, it cannot be guaranteed that the whole optimization process will be faster, since a~Newton step is computationally more expensive than a~gradient descent step. 

\begin{table}[hb]
	\centering
	\small
	{\caption{Symplectic least squares problem: $n=400$, $k = 10$, $\texttt{tol} = 1\e-6$, $\theta = 1\e-2$ for hRiN-Cay/SR-M and $\theta = 1\e-3$ for hRiN-Cay/SR-e.} \label{tab:matrixeq_sparse}}
	\begin{tabular}{lrrrrcccc}
        \toprule
	\multirow{2}{*}{Method} & \multicolumn{2}{c}{\#iter} & \multicolumn{2}{c}{time} & \multirow{2}{*}{$f_*$} & \multirow{2}{*}{$\|\grad f(X_*)\|_{X_*}$} & \multirow{2}{*}{$\frac{\|X_*-X_{\min}\|_{\mathrm{F}}}{\|X_{\min}\|_{\mathrm{F}}}$} & \multirow{2}{*}{feas}\\\cmidrule(l){2-3} \cmidrule(l){4-5} 
    &\quad 1st&2nd&\quad 1st&2nd&&&&\\ \midrule
RGD-Cay-e & 755 && 2.91 && $3.6\e-10$ & $6.1\e-06$ & $5.9\e-07$& $5.3\e-11$\\
RGD-SR-e & 343 && 1.49 && $2.0\e-10$ & $4.8\e-06$ & $4.3\e-07$ & $3.1\e-12$\\
RGD-Cay-M & 239 && 4.48 && $7.0\e-16$ & $1.2\e-12$ & $3.4\e-10$ & $1.1\e-06$\\
RGD-SR-M & 69 && 1.24 && $8.6\e-15$ & $1.3\e-07$ & $1.2\e-09$& $2.8\e-12$\\\midrule
hRiN-Cay-e & 388 & 60 & 1.84 & $6.70$ & $1.3\e-11$ & $5.5\e-06$ & $8.3\e-08$ & $5.4\e-11$\\
hRiN-SR-e & 165 & 3 & $0.71$ & $0.37$ & $3.0\e-14$ & $7.1\e-08$ & $4.8\e-09$ & $2.2\e-12$ \\
hRiN-Cay-M & 237 & 2 & 4.63 & 0.11 & $7.0\e-16$ & $1.2\e-12$ & $3.4\e-10$ & $1.1\e-06$\\
hRiN-SR-M & 67 & 2 & 1.22 & 0.11 & $8.7\e-15$ & $1.3\e-07$ & $1.2\e-09$ & $2.5\e-12$ \\
\bottomrule
\end{tabular}
	\end{table}

 \begin{figure}[t]
 		\centering
   \input{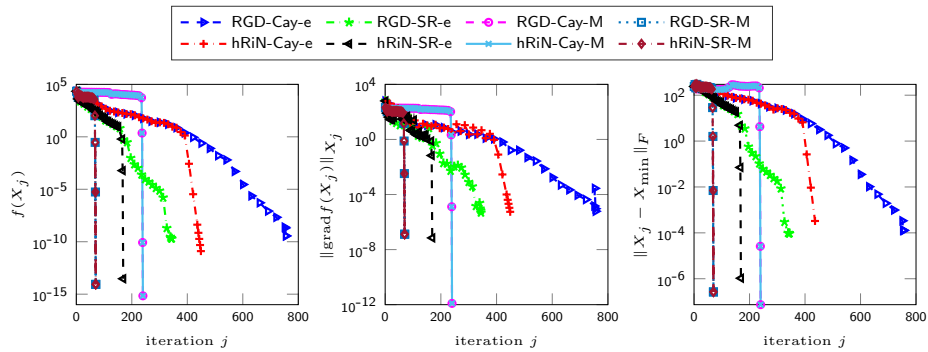}
 		{\caption{Symplectic solution of a matrix least squares problem with $n=400, k = 10$ }
 			\label{fig:matrixeq_sparse_new}}
 \end{figure}

\subsection{Symplectic trace minimization}
Next, we consider the trace minimization problem  over a symplectic Stiefel manifold
\begin{equation}\label{eq:cost_trace}
\min_{X\in \Spkn} f(X) = \frac{1}{2}\tr(X^TAX) 
\end{equation}
with an~spd matrix $A\in\R^{2n\times 2n}$. This minimization problem is essential in computing the smallest symplectic eigenvalues of $A$, see \cite{SonAGS21} and references therein. The ambient gradient and Hessian of the function in~\eqref{eq:cost_trace} have the form
\[
\nabla\bar{f}(X) = AX \quad\text{and}\quad \Nablaf{X}{Z}=AZ,
\]
respectively. 

In the first test, we construct $A = S^T\tilde{D}S$ with $\tilde{D}=\diag(1, 2, \ldots, n, 1, 2, \ldots, n)$ and 
\mbox{$S = \left[\begin{smallmatrix}
I_n&S_1\\S_2&I_n +S_2S_1
\end{smallmatrix}\right]$}, where $S_1$ and $S_2$ are the sparse random symmetric matrices gene\-rated  as \mbox{$S_1 = \texttt{sprandsym}(n,3/n,0.1,1)$} and $S_2 = \texttt{sprandsym}(n,3/n,0.01,1)$. Since the symplectic spectrum is symplectic invariant \cite[Prop.~8.14]{deGo06}, the symplectic eigenvalues of $A$ are just $1, 2,\ldots,n$ and, therefore, the cost function in \eqref{eq:cost_trace} has the minimal value $f_{\min} = f(X_{\min})=1+\cdots+k$. 
The initial guess is taken as $X_0=\left[\begin{smallmatrix}
    I_{n,k}&0_{n,k}\\0_{n,k}&I_{n,k}
\end{smallmatrix}\right]$ with $I_{n,k}=[I_k\;\; 0]^T\in\R^{n\times k}$. 
Other parameters are set to $\texttt{mxit} = 2000$, $\texttt{tol} = 1\e-8$, and $\theta = 1\e-3$. With this setting, all tested schemes are convergent. Numerical results are given in Table~\ref{tab:sympltracemin_new}, while the history of the gradient norm and the distance to $f_{\min}$ are reported in Figure~\ref{fig:tracemin_new}.

\begin{table}[tbp]
\centering
\small
{\caption{\label{tab:sympltracemin_new} Symplectic trace minimization with synthetic data: $n=2000$, $k = 5$, $\texttt{tol} = 1\e-8$, $\theta = 1\e-3$.}} 
			
\begin{tabular}{lrrrrccc}
    \toprule
	\multirow{2}{*}{Method} & \multicolumn{2}{c}{\#iter} & \multicolumn{2}{c}{time} & \multirow{2}{*}{$f_*$} & \multirow{2}{*}{$\|\grad f(X_*)\|_{X_*}$}  & \multirow{2}{*}{feas}\\\cmidrule(l){2-3} \cmidrule(l){4-5} 
    &\quad 1st&2nd&\quad 1st&2nd&&&\\ \midrule
  RGD-Cay-c&982&&4.40&&$1.3\e-10$& $1.6\e-05$ &$1.4\e-13$\\
  RGD-SR-c&1126&&5.89&&$5.3\e-11$&$1.0\e-05$& $8.3\e-16$\\
  RGD-Cay-e&1279&&5.96&&$1.7\e-10$&$1.6\e-05$&$1.6\e-13$\\
  RGD-SR-e&1451&&6.84&&$5.8\e-11$&$1.1\e-05$ & $1.5\e-15$\\
  RGD-Cay-M&19&&0.54&&$6.8\e-14$&$1.8\e-07$&$2.6\e-14$\\
  RGD-SR-M&17&&0.49&&$5.3\e-15$&$2.7\e-07$&$5.2\e-16$\\\midrule
  hRiN-Cay-e&100&4\;&0.44&2.80&$1.3\e-12$&$4.0\e-07$&$3.2\e-13$\\
  hRiN-SR-e&93&4\;&0.48&2.64&$3.2\e-14$&$8.6\e-08$&$7.1\e-16$\\
  hRiN-Cay-M&9&2\;&0.23&26.3&$1.1\e-13$&$3.6\e-08$&$3.2\e-14$\\
  hRiN-SR-M&9&2\;&0.27&0.40&$6.6\e-14$&$1.1\e-07$&$9.2\e-16$\\
			\bottomrule
		\end{tabular}
\end{table}

\begin{figure}[tbp]
	\centering
   \input{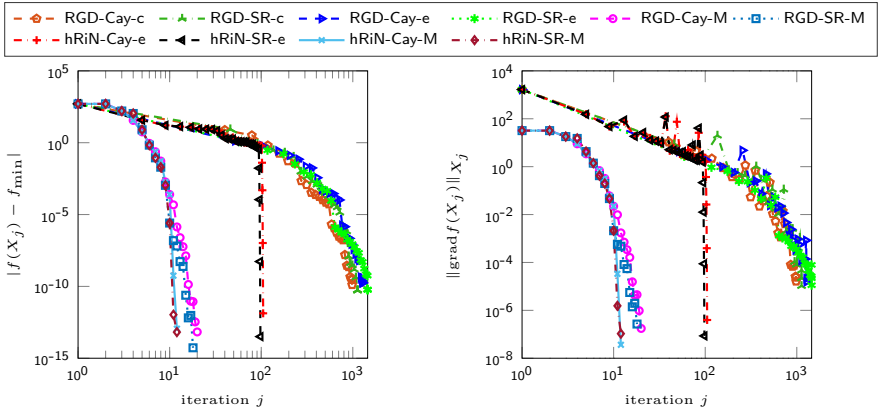}
			{\caption{Symplectic trace minimization with synthetic data: $n=2000$, $k = 5$, $\texttt{tol} = 1\e-8$, $\theta = 1\e-3$}
				\label{fig:tracemin_new}}
\end{figure}

Considering the same minimization problem, but this time with the spd matrix $A=J_{2n}H$, where $H$ is a~Hamiltonian matrix originating from the simulation of a wire saw model \cite{WeiK00} which is  a~weakly damped gyroscopic system. The smallest symplectic eigenvalues of $J_{2n}H$ help to analyze the stability of this system. The data are generated similarly as in \cite[Sect.~6]{SonAGS21} followed by a~normalization $JH/\|JH\|_{\mathrm{F}}$. As a~result, we obtain a minimization problem in $\Spkn$ with $n=2000$ and $k=5$. For the initial guess, we first take a random $2n\times 2k$ matrix with the {\tt default} generator and then choose $X_0$ as the symplectic factor in its SR decomposition. The key differences from the previous example are that $A$ is (almost) dense, which makes the inverse in some formulations rather expensive, and that a~minimizer or a~minimal value of the cost function is unknown. As a~consequence, the preconditioned version of the RGD and Riemannian Newton methods, whose preconditioner is presumably chosen to be $A$ itself, are time-consuming. Meanwhile, RGD methods under the Euclidean metric do not provide a~good enough initial guess for the Riemannian Newton method. 
Therefore, in Figure~\ref{fig:tracemin_wiresaw}, we only report the results of convergent runs, namely, RGD-Cay-c, RGD-SR-c, RGD-Cay-M and RGD-SR-M with the setting $\texttt{tol} = 1\e-8$ and $\theta = 1\e-3.$ One can observe that the two preconditioned RGD schemes reach the tolerance in a~few steps. These two methods also deliver the smallest cost value although they are more expensive to obtain, see Table~\ref{tab:tracemin_wiresaw} for detail. It is however worth noting that, although faster, the RGD methods under the canonical-like metric stagnate when $\|\grad f(X_j)\|_{X_j}/\|\grad f(X_0)\|_{X_0} \approx 1\e-10$, while that of both RGD-Cay-M and RGD-SR-M can reach $\approx 1\e-12$. Indeed, the result from RGD-SR-M is used as a~reference solution which appears in the right plot in Figure~\ref{fig:tracemin_wiresaw}; it reveals how fast the cost values of different methods approach the reference value $f_{\rm{ref}}$.

\begin{table}[htbp]
		\centering
		\small
		{\caption{Symplectic trace minimization with data from a wire saw model: $n=2000$, $k = 5$, $\texttt{tol} = 1\e-8$.
  \label{tab:tracemin_wiresaw}}}
		\begin{tabular}{lrrccc}
			\toprule
			 Method & \#iter & \quad time & $\qquad f(X_*)\qquad$ & $\|\grad(X_*)\|_{X_*}$& feas\\\midrule
  RGD-Cay-c & 179 & 10.8 &$1.02\e-05$ &$1.27\e-05$ &$1.50\e-08$\\
  RGD-SR-c & 200 & 11.9 & $1.06\e-05$ & $1.26\e-05$ & $1.64\e-15$\\
  RGD-Cay-M & 27 & 157 & $1.19\e-07$ & $1.02\e-07$ & $2.06\e-11$\\
  RGD-SR-M & 22 & 131 & $1.19\e-07$ & $8.54\e-08$ & $1.44\e-15$\\\midrule
  RGD-SR-M(ref.) & 41 & 236 & $1.19\e-07$ & $6.17\e-12$ & $2.22\e-15$\\
			\bottomrule
		\end{tabular}
	\end{table}

\begin{figure}[htbp]
	\centering
   \input{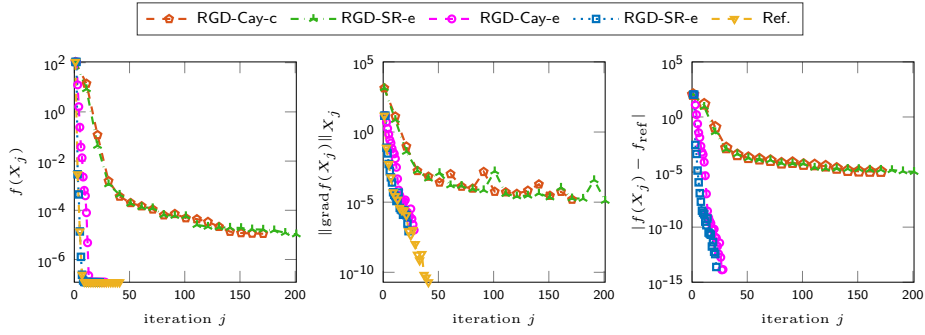}
			{\caption{Symplectic trace minimization with data from a wire saw model: $n=2000$, $k = 5$, $\texttt{tol} = 1\e-8$, $\theta = 1\e-3$} \label{fig:tracemin_wiresaw}}
\end{figure}

\subsection{Trace minimization of a fourth-order function}
Consider a minimization problem 
\begin{equation*}
    \min_{X\in \textrm{Sp}(2n)} f(X) = \frac{1}{2}\tr(X^TAXX^TBX)
\end{equation*}
with the spd matrices $A, B\in\R^{2n\times 2n}$. The ambient gradient and Hessian of the cost function are given by
\begin{align*}
\nabla \bar{f}(X) & = BX(X^TAX) + AX(X^TBX), \\
\Nablaf{X}{Z} &= BZ X^TAX + AZ X^TBX + BXZ^TAX + BX(AX)^TZ\\
&\quad + AX(BX)^TZ + AXZ^TBX,
\end{align*}
respectively. As the Euclidean Hessian of $f$ is cumbersome, an efficient preconditioner can hardly be found. We therefore consider only running the RGD-SR-e and the hRiN-SR-e methods. 

We take the dimension $n=20$ and construct $A = A_1^TA_1^{}$ and $B = B_1^TB_1^{}$, where both $A_1,B_1\in\R^{2n\times 2n}$ are randomly generated of type \texttt{twister} with seed $0$ and $2$, respectively; after that follows a normalization of them. The initial guess is chosen as $X_0 = \left[\begin{smallmatrix}
Q_1&0\\0&Q_2
\end{smallmatrix}\right]$, where $Q_1, Q_2\in\R^{n\times n}$ are randomly generated of type \texttt{twister} with seed 1, followed by orthogonalizations. With the stopping tolerance $\texttt{tol} = 1\e-11$, the switching parameter $\theta = 1\e-7$, only the RGD-SR-c and hRiN-SR-e schemes convergence, where the latter is slightly faster and more accurate than the earlier in the sense that the resulting gradient norm of the cost function is smaller. Note also that after $\texttt{mxit} = 5000$ iterations, the RGD-SR-e scheme is almost convergent with the cost function value of the same order of but a~bit larger gradient norm. More details can be found in Table~\ref{tab:tracemin4}.

\begin{table}[tbp]
\centering
\small
{\caption{\label{tab:tracemin4} Symplectic trace minimization of a fourth-order function: $n= k = 20$, $k = 5$, $\texttt{tol} = 1\e-11$, $\theta = 1\e-7$. }}	
\begin{tabular}{lrrrrccc}
	\toprule
    \multirow{2}{*}{Method} & \multicolumn{2}{c}{\#iter} & \multicolumn{2}{c}{time} & \multirow{2}{*}{$f_*$} & \multirow{2}{*}{$\|\grad f(X_*)\|_{X_*}$} & \multirow{2}{*}{feas}\\\cmidrule(l){2-3} \cmidrule(l){4-5} 
    &\quad 1st&2nd&\quad 1st&2nd&&\\ \midrule
  RGD-SR-c & 2766 && 14.8 && $6.71\e-03$ & $1.00\e-11$ & $3.96\e-15$\\
  RGD-SR-e & 5000 && 31.6 && $6.71\e-03$&$5.09\e-11$& $4.05\e-16$\\
  hRiN-SR-e & 2270 & 4 & 9.48 & 2.45 & $6.71\e-03$ & $8.79\e-13$ & $3.92\e-15$\\
			\bottomrule
		\end{tabular}
\end{table}

\subsection{Discussion}
First and most importantly, in most cases, all the proposed methods work as expected. Acting as a preconditioning step, a~good choice of the metric, if available, considerably accelerates the RGD method and makes it very competitive. Furthermore, with an appropriate value of the switching parameter $\theta$, the Riemannian Newton phase in the hybrid schemes helps to push the iteration faster to the solution with much fewer iterates and less time. In each example, either or both the proposed methods yield faster and more accurate solutions. Second, the Riemannian inexact Newton method, if converging, can replace the Riemannian Newton method with an ignorable reduction in the solution quality. Third, for medium-to-large scale problems, especially when the matrix that represents the chosen metric is dense, using a weighted Euclidean metric has to be carefully considered because it will considerably slow down the loop due to the matrix inverse. Finally, the lack of an efficient solver for the Riemannian Newton equation also makes the method less attractive.

\section{Conclusion}\label{sec:Conclusion}
We have constructed a general Riemannian geometry of the symplectic Stiefel manifold under a family of tractable metrics which provides us freedom to choose the metric for a preconditioning of the problem. The framework suits extremely well the cost functions with constant Euclidean Hessian. Using the approach for tractable metrics, we explicitly computed the Riemannian Hessian of the cost function. Then we have calculated the solution/inexact solution to the Newton equation and constructed the Riemannian (inexact) Newton algorithm on its own and as the second phase of a hybrid Riemannian (inexact) Newton method. We have also proved the global convergence of our hybrid algorithm as well as established the local convergence rate. 

The presented numerical examples showed on the one hand that our proposed methods in most cases work well and outperform the plain Riemannian gradient method. But on the other hand, they pose some issues that might be addressed in a~future study. A constant Euclidean Hessian is quite a strict condition to practice the preconditioning by choosing a metric; therefore, an efficient way of approximating the Euclidean Hessian can enlarge the application of this method. Solving the Newton equation is an important step in practicing the Riemannian (inexact) Newton method but so far, an~efficient solver is still lacking.

\appendix
\section{Proof of Theorem~\ref{th:Rhess_canon-like}}\label{appendix:proof_Prop_Rhess_canon-like}
For computation purpose, we rewrite the Riemannian Hessian in \eqref{eq:Rhessian} as
\begin{align} 
   \Hessc{X}{Z} &= \projc\Bigl(\Mb_{X,c,\rho}^{-1}\Nablaf{X}{Z} 
        + (\D_Z^{} \projc)\Mb_{X,c,\rho}^{-1}\nabla \bar{f}(X) \nonumber\\ 
		&\quad- \Mb_{X,c,\rho}^{-1}(\D_Z^{} \Mb_{X,c,\rho})\Mb_{X,c,\rho}^{-1}\nabla \bar{f}(X) \nonumber  \\
        & \quad + \Mb_{X,c,\rho}^{-1}\mathcal{K}\big(Z, \projc(\Mb_{X,c,\rho}^{-1}\nabla \bar{f}(X))\big)\Bigr). \label{eq:Rhessian_rewrite}
	\end{align}
	To make a step forward to the detailed formulation of this Hessian, by using \eqref{eq:B_X_E_inverse} and \eqref{eq:K_detail}, we first calculate 
	\begin{align}\notag
		\Mb_{X,c,\rho}^{-1}\mathcal{K}(Z, U)= &\ XJ_{2k}\skew(Z^TJ_{2n}^T U) + 2\,XX^TJ_{2n}\sym(Z U^T)J_{2n}X\\\notag
		&-\rho\, X\sym(Z^T\PXp U) + \frac{2}{\rho}\,J_{2n}^{}X_\perp^{} X_\perp^T\sym(Z U^T)J_{2n}X\\ \notag
		&-J_{2n}X_\perp^{} X_\perp^TJ_{2n}^TX(X^TX)^{-1}\sym(Z^T\PXp U)\\
		\notag
		&-J_{2n}X_\perp^{} X_\perp^TJ_{2n}^T\PXp Z\skew\big((X^TX)^{-1}X^T U\big)\\ 
		& - J_{2n}X_\perp^{} X_\perp^TJ_{2n}^T\PXp   U\skew\big((X^TX)^{-1}X^TZ\big). \label{eq:MinvK}
	\end{align}
Next, we plug \eqref{eq:Rgrad_canon} into \eqref{eq:MinvK} to get $\Mb_{X,c,\rho}^{-1}\mathcal{K}\big(Z,\projc(\Mb_{X,c,\rho}^{-1}\nabla\bar{f}(X))\big)$ term by term.
	The first term in \eqref{eq:MinvK} is a~normal vector, c.f. \eqref{eq:normal_space_canonical}, and, hence, it vanishes under the act of~$\projc$ in \eqref{eq:Rhessian_rewrite}. For the second term, we have
	\begin{align}\nonumber
		2\,XX^T&J_{2n}\sym\big(Z\big(\projc\big(\Mb_{X,c,\rho}^{-1}\nabla\bar{f}(X)\big)\big)^T\big)J_{2n}X\\
		& = \rho X\Bigl(\sym\big(X^TJ_{2n} Z\nabla\bar{f}(X)^TXJ_{2k} \big) 
		+\sym\big(X^TJ_{2n} Z J_{2k}^{T}X^{T}\nabla\bar{f}(X)\big)\Bigr).  \label{eq:second_term}
  \end{align}
 Taking $X_\perp$ satisfying \eqref{eq:choice_Xperp}, we obtain that
 \begin{equation}\label{eq:PXperp}
 J_{2n}^{}X_{\perp}^{}X_{\perp}^{T}J_{2n}^{T} = 
 \big(I_{2n}-XJ_{2k}^{}X^TJ_{2n}^T\big)\big(I_{2n}-XJ_{2k}^{}X^TJ_{2n}^T\big)^T=P_X^{} P_X^T
 \end{equation}
 with $P_X$ given in~\eqref{eq:prPXperp}, see \cite[Sect.~4.2]{GSAS21}. This implies
 \begin{equation}\label{eq:PiXPXperp}
     \Pi_X^\perp P_X^{} P_X^T = \Pi_X^\perp P_X^T = P_X^T
 \end{equation}
 and, hence,
 \begin{align}\label{eq:PXpprojMb}
	\PXp\projc\big(\Mb_{X,c,\rho}^{-1}\nabla\bar{f}(X)\big)
    =\PXp P_X^{} P_X^T\nabla\bar{f}(X)
	= P_X^T\nabla\bar{f}(X).
	\end{align}
 Then the third term in \eqref{eq:MinvK} takes the form
 \begin{equation}\label{eq:third_term}
		-\rho\, X\sym\big(Z^T\PXp\big(\projc\Mb_{X,c,\rho}^{-1}\nabla\bar{f}(X)\big)\big) = - \rho\, X\sym\big( Z^T P_X^T\nabla\bar{f}(X)\big).
	\end{equation}
Using \eqref{eq:PXperp} and $P_X^TJ_{2n}P_X^{}P_X^T=P_X^TJ_{2n}P_X^T$, the fourth term in \eqref{eq:MinvK} is expressed as
	\begin{align} 
 \begin{split}\label{eq:fourth_term}
		\frac{2}{\rho}\,J_{2n}&X_\perp^{} X_\perp^T\sym\big( Z\big(\projc(\Mb_{X,c,\rho}^{-1}\nabla\bar{f}(X))\big)^T\big)J_{2n}X\\ 
	 & = P_X^{} P_X^TJ_{2n}\big(Z\,\sym\big(\nabla\bar{f}(X)^TXJ_{2k}\big)
 + \frac{1}{\rho} P_X^T\nabla\bar{f}(X) Z^TJ_{2n}^{}X\big).
  \end{split}
	\end{align}
By direct calculation, we obtain that
	\begin{equation}\label{eq:K_fact5}
		J_{2n}X_\perp^{} X_\perp^TJ_{2n}^TX\big(X^TX\big)^{-1} = 
		 P_X^{} P_X^TX\big(X^TX\big)^{-1} = P_X J_{2n}^{}XJ_{2k}^{}.		
	\end{equation} 
	Combining it with \eqref{eq:PXpprojMb}, the fifth term in \eqref{eq:MinvK} is given by
\begin{align}\nonumber
			- J_{2n}&X_\perp^{} X_\perp^TJ_{2n}^TX\big(X^TX\big)^{-1}\sym\big( Z^T\PXp\projc\big(\Mb_{X,c,\rho}^{-1}\nabla\bar{f}(X)\big)\big) \\
	 & = - P_X J_{2n}^{}XJ_{2k}^{}\,\sym\bigl( Z^TP_X^T\nabla\bar{f}(X)\bigr).\qquad\qquad\qquad\qquad \qquad\qquad  \label{eq:fifth_term}
\end{align}
The sixth term can be determined from \eqref{eq:Rgrad_canon}, \eqref{eq:PXperp}, \eqref{eq:PiXPXperp} and \eqref{eq:K_fact5} as
\begin{align}\nonumber
-J_{2n}^{}&X_\perp^{} X_\perp^TJ_{2n}^T\PXp Z\,\skew\big((X^TX)^{-1}X^T\projc\big(\Mb_{X,c,\rho}^{-1}\nabla\bar{f}(X)\big)\big) \\
& = -P_X Z\, \skew\big(J_{2k}X^TJ_{2n}P_X^T\nabla\bar{f}(X) 
           + \rho\, J_{2k}\,\sym\big(J_{2k}^TX^T\nabla\bar{f}(X)\big)\big). \label{eq:sixth_term}
\end{align}
Finally, using \eqref{eq:Rgrad_canon} and \eqref{eq:PXpprojMb}, we compute the last term 
\begin{align}\nonumber
-J_{2n}^{}&X_\perp^{} X_\perp^TJ_{2n}^T\PXp \projc\big(\Mb_{X,c,\rho}^{-1}\nabla\bar{f}(X)\big)\,\skew\big((X^TX)^{-1}X^T Z\big)\qquad\qquad\qquad\\
& = - P_X^{} P_X^T\nabla\bar{f}(X)\,\skew\big((X^TX)^{-1}X^T Z\big). \label{eq:seventh_term}
\end{align}
Putting \eqref{eq:second_term}, \eqref{eq:third_term}, \eqref{eq:fourth_term}, \eqref{eq:fifth_term}, \eqref{eq:sixth_term}, and \eqref{eq:seventh_term} together, we get
\begin{align}\nonumber
		\Mb_{X,c,\rho}^{-1}&\mathcal{K}\big( Z,\projc(\Mb_{X,c,\rho}^{-1}\nabla\bar{f}(X))\big) 
        =  XJ_{2k}\,\skew\big( Z^TJ_{2n}^T\projc(\Mb_{X,c,\rho}^{-1}\nabla\bar{f}(X))\big)\\ \notag
		& + \rho X\Bigl(\sym\big(X^TJ_{2n} Z\nabla\bar{f}(X)^TXJ_{2k} \big) 
		-\sym\big( Z^T\nabla\bar{f}(X)\big)\Bigr)\\
		\notag
		& +P_X^{}P_X^TJ_{2n}^{}\Bigl( Z\,\sym\big(\nabla\bar{f}(X)^TXJ_{2k}\big)
            + \frac{1}{\rho}\,P_X^T\nabla\bar{f}(X) Z^TJ_{2n}^{}X\Bigr)\\ \notag
		& - P_X J_{2n}^{}XJ_{2k}^{}
			\,\sym\big( Z^TP_X^T \nabla\bar{f}(X)\big)\\[1mm] \notag
		& - P_X Z\,\skew\big(J_{2k}^{}X^TJ_{2n}^{}P_X^T 
			\nabla\bar{f}(X) + \rho\, J_{2k}\,\sym\big(J_{2k}^TX^T\nabla\bar{f}(X)\big)\big)\\[1mm] \label{eq:MKPM}
		&-P_X^{}P_X^T\nabla\bar{f}(X)\,\skew\big((X^TX)^{-1}X^T Z\big).
	\end{align}
	
	Next, let us turn to the two terms left in \eqref{eq:Rhessian_rewrite}. Combining \eqref{eq:Diff_B_X} and \eqref{eq:B_X_E_inverse},  we have
	\begin{align*}
	(\D_Z\Mb_{X,c,\rho})\Mb_{X,c,\rho}^{-1} 
      =&\  J_{2n}XZ^T\!J_{2n}^TXX^T\! 
      + J_{2n}^{}Z J_{2k}^TX^T\! 
      - \rho\,\PXp Z X^T\! \\
      & +\frac{1}{\rho}J_{2n}X Z^T\!X_\perp^{} X_{\perp}^TJ_{2n}^T 
	    - X(X^T\!X)^{-1} Z^T\PXp J_{2n}X_\perp^{}X_{\perp}^TJ_{2n}^T\! \\
     & - \PXp Z(X^T\!X)^{-1}X^T\! J_{2n}X_\perp^{} X_{\perp}^TJ_{2n}^T.
 	\end{align*}
	Due to \eqref{eq:PiXPXperp} and \eqref{eq:K_fact5},  this results  in
	\begin{align}\notag
		\Mb_{X,c,\rho}^{-1}&\left(\D_Z\Mb_{X,c,\rho}\right)\Mb_{X,c,\rho}^{-1}\nabla\bar{f}(X) =\Bigl(\rho\, X J_{2k}Z^TJ_{2n}^TXX^T + \rho\, XX^TJ_{2n}^{}Z J_{2k}^TX^T  \\ \notag
		&\quad 
        + XJ_{2k}Z^TJ_{2n}^{T}J_{2n}^{}X_\perp^{} X_{\perp}^TJ_{2n}^T 
        - \rho XZ^T \PXp J_{2n}^{}X_\perp^{}X_{\perp}^TJ_{2n}^T \\ \notag
		&\quad 
        + J_{2n}^{}X_\perp^{}X_{\perp}^TJ_{2n}^{T}J_{2n}^{}Z J_{2k}^TX^T 
        - \rho J_{2n}^{}X_\perp^{}X_{\perp}^TJ_{2n}^T\PXp Z X^T         \\ \notag
		&\quad - J_{2n}^{}X_\perp^{} X_{\perp}^TJ_{2n}^TX(X^TX)^{-1} Z^T \PXp J_{2n}^{}X_\perp^{}X_{\perp}^TJ_{2n}^T\\ \notag
		&\quad  - J_{2n}^{}X_\perp^{}X_{\perp}^TJ_{2n}^T\PXp Z (X^TX)^{-1}X^T J_{2n}^{}X_\perp^{}X_{\perp}^TJ_{2n}^T
		\Bigr)\nabla\bar{f}(X)\\\notag 
		&= 2\,\rho\,\sym\big(X J_{2k} Z^TJ_{2n}^TXX^T
         - XZ^TP_X^T\big)\nabla\bar{f}(X)\\ \notag
		&\quad + 2\,\sym\big(XJ_{2k} Z^TJ_{2n}^{T} P_X^{}P_X^T-
  P_X J_{2n}^{}XJ_{2k}^{}	Z^T  P_X^T \big)\nabla\bar{f}(X)\notag \\
		& = -2\,\sym\big(\rho\,XZ^T - (XJ_{2k}^{}Z^TJ_{2n}^T+ZJ_{2k}^{}X^TJ_{2n}^T)
  P_X^T \big)\nabla\bar{f}(X)\notag \\
        & = -2\,\sym\big(\rho\, XZ^T-2\,\skew(XJ_{2k}^{}Z^T)J_{2n}^T
        P_X^T\big)\nabla\bar{f}(X).\label{eq:MDMM}
	\end{align}
	Using \eqref{eq:B_X_E_inverse} and \eqref{eq:Diff_orth_proj_canonical}, we obtain
	\begin{align}\nonumber 
		\D_Z\projc(\Mb_{X,c,\rho}^{-1}\nabla \bar{f}(X)) = &-XJ_{2k}^{}\skew\big(Z^TJ_{2n}^T \Mb_{X,c,\rho}^{-1}\nabla \bar{f}(X)\big)\\
		&-\rho Z J_{2k}\skew\big(J_{2k}^TX^T\nabla \bar{f}(X)\big).
		\label{eq:DPMnab}
	\end{align}
	
	As the last step, inserting \eqref{eq:MKPM}, \eqref{eq:MDMM}, and \eqref{eq:DPMnab} into the general formulation for the Riemannian Hessian \eqref{eq:Rhessian_rewrite}, removing the terms belonging to the normal space  \eqref{eq:normal_space_canonical} and taking into account that 
 $\projc(P_X Y)= P_X Y$ for all $Y\in\R^{2n\times 2k}$, we complete the proof.
 


\bibliographystyle{spmpsci}      
\bibliography{references}   
%
%

\end{document}